\documentclass[preprint,3p,fleqn,sort]{elsarticle}
%%
%  packages
%%
\usepackage[labelfont=bf, singlelinecheck=false,justification=justified]{caption}
\usepackage{xcolor}
\usepackage{amsmath,amssymb,amsthm}
\usepackage{stmaryrd} % Brackets for v_jump
\usepackage{enumitem}
\usepackage{bm,cancel}
\usepackage{accents}
\usepackage{lineno,hyperref}
\usepackage[T1]{fontenc}
%%
%  improve text spacing
%%
\makeatletter
\g@addto@macro\normalsize{%
	\setlength\abovedisplayskip{5pt}
	\setlength\belowdisplayskip{5pt}
	\setlength\abovedisplayshortskip{6pt}
	\setlength\belowdisplayshortskip{6pt}
}
\makeatother
%%
%  highlight comments for editing
%%

%%
%  partial derivative and rho definition
%%
\newcommand{\pderivative}[2]{\frac{\partial #1}{\partial #2}}
\let\rho\varrho
%%
%  average and jump commands
%%
\newcommand{\avg}[1]{\left\{\hspace*{-3pt}\left\{#1\right\}\hspace*{-3pt}\right\}}
\newcommand{\jump}[1]{\ensuremath{\left\llbracket #1 \right\rrbracket}}
%%
%  inner products
%%
\newcommand\iprod[1]{\left\langle #1\right\rangle} 			% inner product
			% norm
\newcommand\iprodN[1]{\left\langle #1\right\rangle_{\!N}}		% discrete inner product subscripted with N
		% discrete norm subscripted with N
%%
%  vectors
%%
\renewcommand\vec[1]{\accentset{\,\rightarrow}{#1}}
\newcommand\spacevec[1]{\accentset{\,\rightarrow}{#1}}		% spatial vector, i.e. x \hat x + y \hat y + z \hat z
\newcommand\contravec[1]{\tilde{ #1}}					% contravariant vector
\newcommand\contraspacevec[1]{\spacevec{\tilde{#1}}}		% contravariant space vector
						% spatial vector for a greek letter
			% spatial vector of a state vector = vector of vectors
\newcommand\statevec[1]{\mathbf #1}					% state vector, e.g. [rho, rhov, E, B]^T
\newcommand\statevecGreek[1]{\boldsymbol #1}			% state vector for Greek symbols
\newcommand\contrastatevec[1]{\tilde{\mathbf #1}} 			% contravariant state vector
\newcommand\acclrvec[1]{\accentset{\,\leftrightarrow}{#1}}	% define leftrightarrow as an accent
\newcommand\bigstatevec[1]{\acclrvec{{\mathbf #1}}}		% block vector
\newcommand\biggreekstatevec[1]{\acclrvec{\boldsymbol #1}}	% block vector for Greek letters
\newcommand\bigcontravec[1]{\acclrvec{\tilde{\mathbf #1}}} 	% contravariant block vector
%%
%  matrices
%%
\newcommand\threeMatrix[1]{\underline{ #1}}				% 3x3 matrix
\newcommand\nineMatrix[1]{\mathsf{ #1}}					% 9x9 matrix
\newcommand\twentysevenMatrix[1]{\underline{\mathsf{ #1}}}  % 27x27 sized matrix
\newcommand\matrixvec[1]{\mathcal #1}					% (N+1) x (N+1) sized matrix
					% block matrix
%%
%  define terms for use in the analysis here, such that it is easy to change notation if necessary
%%
						% old source term vector
\newcommand{\JanC}{\statevecGreek{\phi}}				% continuous source term vector
\newcommand{\JanD}{\statevecGreek{\Phi}}				% discrete source term vector
\newcommand{\wDotJan}{\theta}						% source term contracted with entropy variables
\newcommand{\dmat}{\matrixvec{D}}						% polynomial derivative matrix
\newcommand{\qmat}{\matrixvec{Q}}						% SBP matrix
\newcommand{\mmat}{\matrixvec{M}}					% mass matrix
\newcommand{\bmat}{\matrixvec{B}}						% boundary matrix
\newcommand{\viscosity}{\mu_{\mathrm{NS}}}      	      				% kinematic viscosity for the Navier-Stokes equations
\newcommand{\resistivity}{\mu_{\mathrm{R}}}						% resistivity for the MHD equations
\newcommand{\Bstar}{\left(\JanD\contraspacevec{B}\right){}^{\!\!\Diamond}}		% "numerical flux" of magnetic field at the interface
	% nicely formatted polynomial interpolant of a quantity
\newcommand{\testfuncOne}{\statevecGreek{\varphi}}  
\newcommand{\testfuncTwo}{\biggreekstatevec{\vartheta}}
%%
%  environments for theorems, lemmas, etc.
%%
\theoremstyle{plain}
\newtheorem{thm}{Theorem}
\newtheorem{lem}{Lemma}
\newtheorem{cor}{Corollary}
\theoremstyle{remark}
\newtheorem{rem}{Remark}
%%
%  useful macros for integrals
%%
	% surface element
\newcommand{\dS}{\,\operatorname{dS}}		% surface operator
	% notation for physical boundary terms
\newcommand{\ec}{\mathrm{EC}}			% notation for entropy conservative
\newcommand{\es}{\mathrm{ES}}			% notation for entropy stable
\newcommand{\ent}{{\epsilon}} 				% superscript for entropy flux
\newcommand{\ma}{-}					% notation for "master" side
\renewcommand{\sl}{+}					% notation for "slave side
%%
%  index for sums 
%%
\newcommand\interiorfaces{{\mathrm{edges}}}
%%
% BibTeX style
\bibliographystyle{plain}
%%
%  begin the manuscript
%%
\begin{document}

\begin{frontmatter}

\title{An entropy stable nodal discontinuous Galerkin method for the resistive MHD equations: Continuous analysis and GLM divergence cleaning}
\author[mathematik]{Marvin Bohm\corref{correspondingauthor}}
\cortext[correspondingauthor]{Corresponding author}
\ead{mbohm@math.uni-koeln.de}
\author[mathematik]{Andrew R.~Winters}
\author[physik]{Dominik Derigs}
\author[mathematik]{Gregor J.~Gassner}
\author[physik]{Stefanie Walch}
\author[geophysik]{Joachim Saur}

\address[mathematik]{Mathematisches Institut, Universit\"at zu K\"oln, Weyertal 86-90, 50931 K\"oln}
\address[physik]{I.\,Physikalisches Institut, Universit\"at zu K\"oln, Z\"ulpicher Stra\ss{}e~77, 50937 K\"oln}
\address[geophysik]{Institut f\"ur Geophysik und Meteorologie, Universit\"at zu K\"oln, Pohligstra\ss{}e 3, 50969 K\"oln}

\numberwithin{equation}{section}

\begin{keyword}
magnetohydrodynamics \sep resistivity \sep entropy stability \sep discontinuous Galerkin spectral element method \sep hyperbolic divergence cleaning \sep summation-by-parts
\end{keyword}

\begin{abstract}
This work presents an extension of discretely entropy stable discontinuous Galerkin (DG) methods to the resistive magnetohydrodynamics (MHD) equations. Although similar to the compressible Navier-Stokes equations at first sight, there are some important differences concerning the resistive MHD equations that need special focus. The continuous entropy analysis of the ideal MHD equations, which are the advective parts of the resistive MHD equations, shows that the divergence-free constraint on the magnetic field components must be incorporated as a non-conservative term in a form either proposed by Powell or Janhunen. Consequently, this non-conservative term needs to be discretized, such that the approximation is consistent with the entropy. As an extension of the ideal MHD system, we address in this work the continuous analysis of the resistive MHD equations and show that the entropy inequality holds. Thus, our first contribution is the proof that the resistive terms are symmetric and positive semi-definite when formulated in entropy space as gradients of the entropy variables. Moreover, this enables the construction of an entropy stable DG discretization for the resistive MHD equations. However, the resulting method suffers from large errors in the divergence-free constraint, since no particular treatment of divergence errors is included in the standard resistive MHD model. Hence, our second contribution is the extension of the resistive MHD equations with proper divergence cleaning based on a generalized Lagrange multiplier (GLM) strategy. We construct and analyze a DG method that is entropy stable for the resistive MHD equations and has a built-in GLM divergence cleaning mechanism. The theoretical derivations and proofs are then verified by several numerical examples. Specifically, we demonstrate that the entropy stable method with GLM divergence cleaning is more robust than the standard high-order DG method for the resistive MHD equations.
\end{abstract}

\end{frontmatter}

%%%%%%%%%%%%%%%%%%%%%%%%%%%%%%%%%%%%%%%%%%%%%%%%%%%%%%%%%%%%%%%%%%%%%%%%
\section{Introduction}\label{Sec:Intro}
%%%%%%%%%%%%%%%%%%%%%%%%%%%%%%%%%%%%%%%%%%%%%%%%%%%%%%%%%%%%%%%%%%%%%%%%

The resistive magnetohydrodynamic (MHD) equations are of great interest in many areas of plasma, space and astrophysics. This stems from a wide range of applications such as electromagnetic turbulence in conducting fluids, magnetically confined fusion for power generation, modeling the action of dynamos and predicting the interaction of the solar wind with planets or moons. The governing equations are able to describe both dense and sparse plasmas that are time-dependent with motions that feature a wide range of temporal and spatial scales, e.g., compressible MHD turbulence. In addition, the resistive MHD equations exhibit a mixed hyperbolic-parabolic character depending on the strength of the viscous or resistive effects. Another important property, in a closed physical system is the second law of thermodynamics, i.e., the evolution of the entropy. In the absence of resistivity and viscosity, that is for the ideal MHD model, and if the resulting solutions are smooth, the entropy of the system is an additional conserved quantity, although not explicitly built into the mathematical model. Further, in the presence of shocks, the second law of thermodynamics becomes the entropy inequality, e.g. \cite{harten1983}, which guarantees that entropy is always dissipated with the correct sign. It is assumed that the additional resistive terms have a pure entropy dissipative effect as well. But, to the best of our knowledge, no continuous entropy analysis of the resistive MHD equations has been presented in literature and it is unclear if the entropy inequality holds for the full resistive MHD equations. 

The discrete analog to the entropy inequality of the continuous partial differential equations (PDE) is \textit{entropy stability}. Constructing such entropy stable methods for the advective parts of the MHD model has been studied by many authors, e.g., \cite{Barth1999,Chandrashekar2015,Liu2017,Rossmanith2013,Winters2016}. A complication when discussing entropy stability for MHD models is the necessity of the involution condition, that is, the divergence-free constraint of the magnetic field \cite{Barth1999,Godunov1972}
\begin{equation}\label{eq:divBIntro}
\spacevec{\nabla}\cdot\spacevec{B} = 0.
\end{equation}
The condition \eqref{eq:divBIntro}, like entropy stability, is an additional PDE not explicitly built into the resistive MHD equations. To cancel extraneous terms in the entropy analysis requires the addition of a non-conservative term that is proportional to the divergence of the magnetic field, as first introduced by Godunov \cite{Godunov1972}. In the context of numerical schemes, several such non-conservative terms have been proposed over the years by Brackbill and Barnes \cite{Brackbill1980}, Powell \cite{Powell1999} and Janhunen \cite{Janhunen2000}. On the continuous level, adding a non-conservative term scaled by \eqref{eq:divBIntro} is a clever way of adding zero to the model. However, for numerical approximations, there are known stability and accuracy issues that differ between the three types of non-conservative terms \cite{Sjogreen2017}.

Such differences in stability and accuracy are largely due to the discrete satisfaction of the divergence-free condition. It is well known in the MHD numerics community that even if the initial conditions of a problem satisfy \eqref{eq:divBIntro}, it is not guaranteed that the discrete evolution of a plasma will remain divergence-free in the magnetic field. Therefore, many numerical techniques have been devised to control errors introduced into the divergence-free constraint by a numerical approximation. These include adding the aforementioned non-conservative term, the projection approach of Brackbill and Barnes \cite{Brackbill1980}, the method of constrained transport introduced by Evans and Hawley \cite{Evans1988} and the generalized Lagrange multiplier (GLM) hyperbolic divergence cleaning technique originally proposed for the ideal MHD equations by Dedner et al. \cite{Dedner2002}. A thorough review of the behavior and efficacy of  these techniques, except hyperbolic divergence cleaning, is provided by T\'{o}th \cite{Toth2000}. Due to its relative ease of implementation and computational efficiency we are most interested in the method of hyperbolic divergence cleaning. However, the current work is also concerned with constructing entropy stable numerical approximations. Recent work by Derigs et al. \cite{Derigs2017} modified the additional GLM divergence cleaning system in such a way that the resulting ideal GLM-MHD system is consistent with the continuous entropy analysis and provides inbuilt divergence cleaning capabilities. 

The goal of this work is to construct an entropy stable high-order discontinuous Galerkin (DG) method for the modified GLM-MHD system augmented with resistive terms to obtain a \textit{robust}, high-order accurate numerical discretization with inbuilt divergence cleaning. Recently, the construction and analysis of discretely entropy stable DG methods for non-linear conservation laws have made major progress \cite{carpenter_esdg,Gassner:2016ye,Liu2017}. The key to \textit{discrete} entropy stability is to mimic the integration-by-parts property in the DG operators with the so-called summation-by-parts (SBP) property. This enables the construction of DG methods that are entropy stable without the assumption of exact evaluation of the variational forms. The reason we are particularly interested in entropy stability for our DG method is the expected increase in robustness.  We know that for the compressible Navier-Stokes equations an entropy stable DG method has drastically increased robustness in comparison to the standard DG and even in comparison to DG methods with de-aliasing, e.g. \cite{carpenter_esdg,Gassner:2016ye}. Solutions of the resistive MHD equations can be even more complicated than solutions to the compressible Navier-Stokes equations. As such, we aim for the construction of a high-order method that is able to capture the wide range of scales in a robust and stable way.  

However, the construction of entropy stable high-order DG methods for the resistive MHD equations involves several key differences in comparison to standard conservation laws. One major difference is the addition of a non-conservative PDE term proportional to the discrete divergence of the magnetic field. Typically, entropy stable discontinuous Galerkin spectral element methods (DGSEM) are built for conservation laws. Thus, precise and careful numerical treatment of the non-conservative term is necessary. We note that there is recent work on entropy stable DG methods applied to the ideal MHD equations by Rossmanith \cite{Rossmanith2013}, Liu et al. \cite{Liu2017} and Gallego-Valencia \cite{Valencia2017}. In particular, we extend the ideas presented in Liu et al. \cite{Liu2017} and provide clarification and motivation to the structure of the presented discretization of the non-conservative term and add detailed proofs of the 2D extension. As already mentioned, another important difference is the numerical control of the divergence-free constraint. To do so, we incorporate the modified GLM divergence cleaning approach in our full high-order DG entropy analysis. Furthermore, we build on the recent work of Gassner et al. \cite{Gassner2017} who showed that using a Bassi-Rebay (BR1) type scheme \cite{Bassi&Rebay:1997:B&F97} is entropy stable for the viscous terms of the compressible Navier-Stokes equations, if the scheme is formulated with the gradients of the entropy variables instead of the gradients of the conservative or primitive variables. To use this proof, we reformulate the resistive terms by the gradients of the entropy variables and show that the resulting resistive coefficient matrices are symmetric and positive semi-definite.  

The remainder of this paper is organized as follows: Our first main result, in Sec. \ref{Sec:Systems}, is the continuous entropy analysis of the three-dimensional resistive GLM-MHD equations, which demonstrates that the model indeed satisfies the entropy inequality and that the resistive terms can be recast into a symmetric and positive semi-definite form. The discontinuous Galerkin spectral element method (DGSEM) is outlined in Sec. \ref{Sec:DG}. The second main result of this work, in Sec. \ref{Sec:ESDGres}, is to demonstrate the entropy stability of the numerical approximation by discretely mimicking the continuous entropy analysis with special attention given to the GLM divergence cleaning and resistive parts. We present numerical results in Sec. \ref{Sec:Num} to verify the high-order nature and theoretical entropy conservation (without viscous terms) and entropy stability (including viscous terms) of the derived scheme. We furthermore demonstrate the increased robustness of the novel DG method. Conclusion remarks are given in the final section.

%%%%%%%%%%%%%%%%%%%%%%%%%%%%%%%%%%%%%%%%%%%%%%%%%%%%%%%%%%%%%%%%%%%%%%%%
\section{Continuous entropy analysis}\label{Sec:Systems}
%%%%%%%%%%%%%%%%%%%%%%%%%%%%%%%%%%%%%%%%%%%%%%%%%%%%%%%%%%%%%%%%%%%%%%%%

As a starting point for the discussion and numerical approximations in this work we outline the resistive GLM-MHD equations. In general, we consider systems of conservation laws in a domain $\Omega \subset \mathbb{R}^3$ defined as
\begin{equation}
 \statevec{u}_t + \vec{\nabla} \cdot \bigstatevec{f} = \statevec{0},
\label{consLaw}
\end{equation}  
where $\statevec{u}$ denotes the vector of conserved variables and $\bigstatevec{f}$ the multidimensional flux vector. These definitions allow for a compact notation that will simplify the analysis, i.e., we define block vectors with the double arrow as
\begin{equation}\label{eq:firstBlockVector}
\bigstatevec{f} =
 \left[ {\begin{array}{*{20}{c}}
  {{\statevec f_1}} \\ 
  {{\statevec f_2}} \\ 
  {{\statevec f_3}} 
\end{array}} 
\right],
\end{equation}
and the spatial gradient of a state as
\begin{equation}
\spacevec{\nabla} \statevec u = \left[ {\begin{array}{*{20}{c}}
\statevec{u}_x\\
\statevec{u}_y\\
\statevec{u}_z
\end{array}}
\right].
\end{equation}
The gradient of a spatial vector is a second order tensor, written in matrix form as
\begin{equation}
\spacevec{\nabla}\spacevec{v} =\left[ {\begin{array}{*{20}{ccc}}
\pderivative{v_1}{x} & \pderivative{v_1}{y} & \pderivative{v_1}{z} \\[0.2cm]
\pderivative{v_2}{x} & \pderivative{v_2}{y} & \pderivative{v_2}{z} \\[0.2cm]
\pderivative{v_3}{x} & \pderivative{v_2}{y} & \pderivative{v_3}{z} \\[0.2cm]
\end{array}}
\right]
\end{equation}
and the divergence of a flux written as a block vector is defined as
\begin{equation}
\spacevec\nabla  \cdot \bigstatevec f = \left(\statevec{f}_1\right)_{\!x} + \left(\statevec{f}_2\right)_{\!y} + \left(\statevec{f}_3\right)_{\!z}.
\end{equation}

%%%%%%%%%%%%%%%%%%%%%%%%%%%%%%%%%%%%%%%%%%%%%%%%%%%%%%%%%%%%%%%%%%%%%%%%
\subsection{Resistive GLM-MHD equations}\label{Sec:ResistiveMHD}
%%%%%%%%%%%%%%%%%%%%%%%%%%%%%%%%%%%%%%%%%%%%%%%%%%%%%%%%%%%%%%%%%%%%%%%%

The equations that govern the evolution of resistive, conducting fluids depend on the solution as well as its gradient \cite{dumbser2009,fambri2017,Warburton1999}. In the advective components we also include the recent modifications proposed by Derigs et al. \cite{Derigs2017} to incorporate a GLM divergence cleaning framework into the model. Thus, the complete mathematical model considered in this work can be defined in the domain $\Omega \subset \mathbb{R}^3$ by
\begin{equation}
 \statevec{u}_t + \vec{\nabla} \cdot \bigstatevec{f}^a(\statevec{u}) - \vec{\nabla} \cdot \bigstatevec{f}^v(\statevec{u},\vec{\nabla} \statevec{u})  + \JanC(\vec{\nabla}\cdot \spacevec{B}) = \statevec{r}
\label{resGLMMHD}
\end{equation} 
with $\statevec{u} = (\rho, \rho \spacevec{v}, E, \spacevec{B}, \psi )^T$ as well as the advective and viscous fluxes:
\begin{equation}
\resizebox{0.925\hsize}{!}{$
\bigstatevec{f}^a(\statevec{u}) = 
\begin{pmatrix} 
\rho \spacevec{v} \\[0.15cm]
\rho (\spacevec{v}\, \spacevec{v}^{\,T}) + \left(p+\frac{1}{2} \|\spacevec{B}\|^2\right) \threeMatrix{I} - \spacevec{B} \spacevec{B}^T \\[0.15cm]
\spacevec{v}\left(\frac{1}{2}\rho \left\|\spacevec{v}\right\|^2 + \frac{\gamma p}{\gamma -1} + \|\spacevec{B}\|^2\right) - \spacevec{B}\left(\spacevec{v}\cdot\spacevec{B}\right) + c_h \psi \spacevec{B} \\[0.15cm]
\spacevec{v}\,\spacevec{B}^T - \spacevec{B}\,\spacevec{v}^{\,T} + c_h \psi \threeMatrix{I} \\ c_h \spacevec{B}\\[0.15cm]
\end{pmatrix} 
~~~~~ 
\bigstatevec{f}^v(\statevec{u},\vec{\nabla} \statevec{u})  = 
\begin{pmatrix} 
0 \\[0.15cm] 
\threeMatrix{\tau} \\[0.15cm]
\threeMatrix{\tau}\spacevec{v} -\vec{\nabla} T - \resistivity \left(\left(\vec{\nabla} \times \spacevec{B}\right) \times \spacevec{B}\right) \\[0.15cm]
\resistivity \left(\left(\vec{\nabla} \spacevec{B}\right)^T - \vec{\nabla} \spacevec{B}\right)\\[0.15cm]
0 \\[0.15cm]
\end{pmatrix}$}
\label{fluxes}
\end{equation}
Here, $\rho,\,\spacevec{v}=(v_1,v_2,v_3)^T,\,p,\,E$ are the mass density, fluid velocities, pressure and total energy, respectively, $\spacevec{B} = (B_1,B_2,B_3)^T$ denotes the magnetic field components and $\threeMatrix{I}$ the $3\times 3$ identity matrix. Furthermore, the viscous stress tensor reads \cite{kundu2008}
\begin{equation}
\threeMatrix{\tau} = \viscosity ((\vec{\nabla} \spacevec{v}\,)^T + \vec{\nabla} \spacevec{v}\,) - \frac{2}{3} \viscosity(\vec{\nabla} \cdot \spacevec{v}\,) \threeMatrix{I}
\label{stress}
\end{equation}
and the heat flux is defined as
\begin{equation}
\vec{\nabla} T = -\kappa \vec{\nabla} \left(\frac{p}{R \rho}\right).
\label{heat}
\end{equation}
The introduced constants $\viscosity,\resistivity,\kappa,R > 0$ describe the viscosity from the Navier-Stokes equations, resistivity of the plasma, thermal conductivity and the universal gas constant, respectively. In particular, the constants $\viscosity$ and $\resistivity$ are first-order transport coefficients that describe the kinematic viscosity and the diffusivity of the magnetic field \cite{woods1993}. We close the system with the ideal gas assumption, which relates the total energy and pressure 
\begin{equation}
p = (\gamma-1)\left(E - \frac{1}{2}\rho\left\|\spacevec{v}\right\|^2 - \frac{1}{2}\|\spacevec{B}\|^2 - \frac{1}{2}\psi^2\right),
\label{eqofstate}
\end{equation}
where $\gamma$ denotes the adiabatic coefficient. 

The system \eqref{resGLMMHD} contains the GLM extension indicated by the additional field variable $\psi$, which controls the divergence error and propagates it out of the physical domain by the constant wave speed $c_h$. We also introduce two non-conservative terms: The first one is $\JanC$ scaled by $\vec{\nabla} \cdot \spacevec{B}$ and, thus, zero in the continuous case. The particular choice of $\JanC$ is related to the thermodynamic properties of \eqref{resGLMMHD} as it ensures the entropy conservation for the advective part of the system outlined in the next section. In the original approach to derive thermodynamically consistent methods for the ideal MHD equations $\JanC$ is chosen to be the Powell source term \cite{Barth1999,Powell1999}, which also recovers the symmetrization of the system \cite{Godunov1972}. In this work we restrict to the alternative Janhunen source term \cite{Janhunen2000}, for two reasons:
\begin{enumerate}
\item It is sufficient to restore the satisfaction of the second law of thermodynamics even when $\spacevec{\nabla}\cdot\spacevec{B}\ne 0$, e.g. \cite{Winters2016}, which is important in the construction of entropy stable numerical methods.
\item The equations remain conservative in all the hydrodynamic variables so the satisfaction of the first law of thermodynamics, i.e. conservation of the total energy, is unchanged.
\end{enumerate}
In particular, the components of the Janhunen source term are
\begin{equation}\label{Janhunen}
\JanC = \left(0,0,0,0,0,v_1,v_2,v_3,0\right)^T.
\end{equation}
We note that because on the continuous level $\vec{\nabla}\cdot\spacevec{B}=0$ the modification of $\JanC$ to the resistive GLM-MHD system is consistent.

The second source term $\statevec{r}$ on the right hand side of the system solely provides additional damping of the divergence error \cite{Dedner2002,Derigs2017}, if desired, and is
\begin{equation}\label{damping}
	\statevec{r} = \left(0,0,0,0,0,0,0,0,-\alpha\psi\right)^T
\end{equation}
with $\alpha\geq 0$. 

%%%%%%%%%%%%%%%%%%%%%%%%%%%%%%%%%%%%%%%%%%%%%%%%%%%%%%%%%%%%%%%%%%%%%%%%
\subsection{Thermodynamic properties of the system}\label{Sec:ContEntr}
%%%%%%%%%%%%%%%%%%%%%%%%%%%%%%%%%%%%%%%%%%%%%%%%%%%%%%%%%%%%%%%%%%%%%%%%

To discuss the thermodynamic properties of the resistive GLM-MHD equations \eqref{resGLMMHD} we translate the concepts of the first and second law of thermodynamics into a mathematical context. To do so, we first exclusively examine the advective and non-conservative term proportional to \eqref{eq:divBIntro}. The ideal GLM-MHD equations satisfy the first law of thermodynamics, because the evolution of the total fluid energy is one of the conserved quantities. This is true for any choice of the vector $\JanC$ because \eqref{eq:divBIntro} is assumed to hold in the continuous analysis. But, on the discrete level, this is not the case as noted by many authors \cite{Derigs2017,Powell1999,Janhunen2000,Toth2000}. However, the mathematical description of the second law of thermodynamics is more subtle, because the entropy is not explicitly built into the system. Thus, we require more formalism and utilize the well-developed entropy analysis tools for hyperbolic systems, e.g. \cite{harten1983,tadmor1984,mock1980}. As such, we define a strongly convex entropy function that is then used to define an injective mapping between state space and entropy space. Note that we adopt the mathematical notation that the entropy function is negative as is often done, e.g. \cite{harten1983,Tadmor2003}. Once the advective terms are accounted for in entropy space, we demonstrate the first main result of this work: the resistive terms do not violate the second law of thermodynamics. 

For the ideal and the resistive GLM-MHD equations a suitable entropy function is the entropy density divided by the constant $(\gamma-1)$ for convenience as
\begin{equation}
S(\statevec{u}) = - \frac{\rho s}{\gamma-1} ~~ \text{ with } ~~ s = \ln\left(p \rho^{-\gamma}\right),
\label{entropy}
\end{equation}
where $s$ is the physical entropy \cite{Landau1959}.  From the entropy function we define the entropy variables to be
\begin{equation}
\statevec{w} = \frac{\partial S}{\partial \statevec{u}} = \left(\frac{\gamma-s}{\gamma-1} - \beta\left\|\spacevec{v}\right\|^2,~2\beta v_1,~2\beta v_2,~2\beta v_3,~-2\beta,~2\beta B_1,~2\beta B_2,~2\beta B_3,~2\beta \psi\right)^T
\label{entrvars}
\end{equation}
and we introduce $\beta = \frac{\rho}{2p}$, which is proportional to the inverse temperature.

It is known, that for smooth solutions when we contract the ideal GLM-MHD equations without viscous fluxes nor any source term $\statevec{r}$ on the right hand side by the entropy variables \eqref{entrvars} we obtain the entropy conservation law \cite{Derigs2017}
\begin{equation}\label{eq:PDEcontracted}
 \frac{\partial S}{\partial t} + \vec{\nabla}\cdot\spacevec{f}^\ent = 0,
\end{equation}
where we define the entropy fluxes to be
\begin{equation}
\spacevec{f}^\ent = \spacevec{v}S.
\end{equation}

Additionally, as it will be necessary in later derivations and the proof of discrete entropy stability, we compute the entropy flux potential to be
\begin{equation}\label{eq:entPotential}
\spacevec{\Psi} = \statevec{w}^T\bigstatevec{f}^a - \spacevec{f}^\ent+\wDotJan \spacevec{B},
\end{equation}
where we introduce notation for the non-conservative state vector $\JanC$ \eqref{Janhunen} contracted into entropy space
\begin{equation}\label{eq:contractSource}
\wDotJan = \statevec{w}^T\JanC = 2\beta(\spacevec{v}\cdot\spacevec{B}). 
\end{equation}
We note that, if the Powell non-conservative state vector \cite{Powell1999} was used, the result \eqref{eq:contractSource} still holds \cite{Derigs2017}.

Furthermore, e.g. in case of shock discontinuities, the solution satisfies the following entropy inequality
\begin{equation} 
  S_t + \vec{\nabla}\cdot(\spacevec{v}S) \leq 0,
\label{entrineq}
\end{equation} 
which is the mathematical description of the second law of thermodynamics for the ideal MHD equations. Next, we account for the resistive terms to demonstrate the entropy inequality for the resistive GLM-MHD equations. To do so, we require a suitable representation to discuss how the resistive terms affect \eqref{entrineq}.  

\begin{lem}[Entropy representation of viscous fluxes]\label{lemma1}~\newline
The viscous fluxes of the resistive GLM-MHD equations in \eqref{fluxes} can be expressed by gradients of the entropy variables as
\begin{equation}
 \bigstatevec{f}^v(\statevec{u},\vec{\nabla} \statevec{u}) = \twentysevenMatrix{K}  \vec{\nabla} \statevec{w}
\label{Kvisc}
\end{equation}
with a block matrix $\twentysevenMatrix{K}\in\mathbb{R}^{27\times 27}$ that is symmetric and positive semi-definite matrix, i.e,
\begin{equation}
\mathsf{q}^T\twentysevenMatrix{K}\mathsf{q}\geq 0,\quad\forall\mathsf{q}\in\mathbb{R}^{27}.
\end{equation}
\end{lem}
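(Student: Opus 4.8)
The plan is to reuse the Navier--Stokes argument of Gassner et al.~\cite{Gassner2017} for the viscous-stress and heat-conduction terms, and to treat the genuinely resistive terms by an analogous but new computation. First I would rewrite every physical gradient occurring in \eqref{fluxes}, \eqref{stress}, \eqref{heat} through gradients of the entropy variables \eqref{entrvars}. From $\statevec{w}$ one reads off $\beta=-\tfrac{1}{2}w_5$, $v_i=-w_{i+1}/w_5$, $B_i=-w_{i+5}/w_5$, and, by the ideal gas law, $p/(R\rho)=1/(2R\beta)=-1/(Rw_5)$; differentiating gives $\vec{\nabla}v_i=\tfrac{1}{2\beta}(\vec{\nabla}w_{i+1}+v_i\vec{\nabla}w_5)$, $\vec{\nabla}B_i=\tfrac{1}{2\beta}(\vec{\nabla}w_{i+5}+B_i\vec{\nabla}w_5)$, and the heat-flux term $\vec{\nabla}T$ of \eqref{heat} a state-dependent multiple of $\vec{\nabla}w_5$. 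Substituting these into the viscous flux casts it in the form $\bigstatevec{f}^v=\twentysevenMatrix{K}\vec{\nabla}\statevec{w}$ with a block matrix $\twentysevenMatrix{K}\in\mathbb{R}^{27\times27}$ whose entries are algebraic functions of $\statevec{u}$ alone. Only the gradients of $w_2,\dots,w_8$ occur, so the rows of $\twentysevenMatrix{K}$ attached to the density and $\psi$ equations, and the columns attached to $w_1$ and $w_9$, vanish identically.

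Next I would split $\twentysevenMatrix{K}=\twentysevenMatrix{K}^{\mathrm{NS}}+\twentysevenMatrix{K}^{\mathrm{R}}$, where $\twentysevenMatrix{K}^{\mathrm{NS}}$ carries the contributions of $\threeMatrix{\tau}$, $\threeMatrix{\tau}\spacevec{v}$ and $\vec{\nabla}T$ (coefficients $\viscosity,\kappa$), coupling only $w_2,\dots,w_5$, and $\twentysevenMatrix{K}^{\mathrm{R}}$ carries the resistive contributions (coefficient $\resistivity$) -- the flux $\resistivity((\vec{\nabla}\spacevec{B})^{T}-\vec{\nabla}\spacevec{B})$ in the induction slots and $-\resistivity((\vec{\nabla}\times\spacevec{B})\times\spacevec{B})$ in the energy slot -- coupling only $w_5,w_6,w_7,w_8$. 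Embedded in $\mathbb{R}^{27\times27}$ with zeros elsewhere, $\twentysevenMatrix{K}^{\mathrm{NS}}$ is exactly the Navier--Stokes viscous coefficient matrix, which is symmetric and positive semi-definite by \cite{Gassner2017}. The two blocks share only the $w_5$ row and column, and there only additively, so it remains to show that $\twentysevenMatrix{K}^{\mathrm{R}}$ is symmetric and positive semi-definite; the sum of two symmetric positive semi-definite matrices is again one, which then yields the lemma.

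For the symmetry of $\twentysevenMatrix{K}^{\mathrm{R}}$ I would write the induction-slot flux in direction $m$ as $\resistivity(\partial_m B_i-\partial_i B_m)$ and expand the Lorentz-type term componentwise via $((\vec{\nabla}\times\spacevec{B})\times\spacevec{B})_m=\sum_k B_k(\partial_k B_m-\partial_m B_k)$. Inserting the entropy-gradient identities and matching coefficients, the $w_{i+5}$--$w_{j+5}$ and $w_5$--$w_5$ sub-blocks come out symmetric -- the latter equals $\tfrac{\resistivity}{2\beta}(\|\spacevec{B}\|^{2}\delta_{mn}-B_mB_n)$ -- and, in the cross-block, the coefficient of $\partial_n w_{i+5}$ in the energy slot and the coefficient of $\partial_m w_5$ in the $i$-th induction slot are both found to equal $\tfrac{\resistivity}{2\beta}(B_i\delta_{mn}-B_n\delta_{mi})$. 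This last identity is the step I expect to cost the most care: $(\vec{\nabla}\spacevec{B})^{T}-\vec{\nabla}\spacevec{B}$ is antisymmetric, so the coupling to the energy equation is symmetric only if one is consistent about which index of these $3\times3$ matrices labels the flux direction, and it must be the same convention under which the induction and energy slots reproduce the physically correct magnetic diffusion and Ohmic heating.

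Finally, for positive semi-definiteness of $\twentysevenMatrix{K}^{\mathrm{R}}$ I would evaluate the quadratic form directly. Given $\mathsf{q}\in\mathbb{R}^{27}$ written by direction as $\mathsf{q}=(\mathsf{q}^{(1)},\mathsf{q}^{(2)},\mathsf{q}^{(3)})$ with $\mathsf{q}^{(m)}\in\mathbb{R}^{9}$, set $\gamma_m=-\tfrac{1}{2}\mathsf{q}^{(m)}_{5}$ and $b_{mi}=\tfrac{1}{2\beta}(\mathsf{q}^{(m)}_{i+5}+B_i\mathsf{q}^{(m)}_{5})$ -- precisely the values $\partial_m\beta$ and $\partial_m B_i$ would take for $\mathsf{q}=\vec{\nabla}\statevec{w}$. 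Substituting into $\mathsf{q}^{T}\twentysevenMatrix{K}^{\mathrm{R}}\mathsf{q}$, the terms proportional to $\gamma_m$ cancel after antisymmetrizing in the two magnetic indices -- the same cancellation that reduces the entropy contraction of the resistive flux to the pointwise Ohmic dissipation -- leaving $\mathsf{q}^{T}\twentysevenMatrix{K}^{\mathrm{R}}\mathsf{q}=\beta\,\resistivity\sum_{m,i}(b_{mi}-b_{im})^{2}\ge0$, since $\beta>0$ and $\resistivity>0$. Together with the Navier--Stokes part this establishes \eqref{Kvisc} with $\twentysevenMatrix{K}$ symmetric and positive semi-definite.
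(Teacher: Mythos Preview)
Your proposal is correct and shares the paper's overall scaffolding: both rewrite the physical gradients $\vec{\nabla}v_i$, $\vec{\nabla}B_i$, $\vec{\nabla}(p/\rho)$ in terms of $\vec{\nabla}\statevec{w}$, split $\twentysevenMatrix{K}=\twentysevenMatrix{K}^{\mathrm{NS}}+\twentysevenMatrix{K}^{\mathrm{R}}$, invoke the known Navier--Stokes result for the first summand, and treat the resistive block on its own. The genuine divergence is in how positive semi-definiteness of $\twentysevenMatrix{K}^{\mathrm{R}}$ is established. The paper writes out all nine $9\times9$ blocks explicitly, checks the symmetry relations $\nineMatrix{K}_{ij}=\nineMatrix{K}_{ji}^T$ by inspection, and then appeals to a computer-algebra computation (Maxima) that returns the eigenvalues of $\twentysevenMatrix{K}^{\mathrm{R}}$ in closed form, namely $\{0,\,2\resistivity p/\rho,\,\resistivity p(\|\spacevec{B}\|^2+2)/\rho\}$ with multiplicities $\{24,1,2\}$, all non-negative for $p,\rho>0$. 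Your route---evaluating the quadratic form directly and collapsing it to $\beta\,\resistivity\sum_{m,i}(b_{mi}-b_{im})^2$---is more self-contained (no symbolic software needed) and physically transparent: for $\mathsf{q}=\vec{\nabla}\statevec{w}$ the form equals $2\beta\,\resistivity\,\|\vec{\nabla}\times\spacevec{B}\|^{2}$, i.e.\ the Ohmic dissipation divided by temperature, which is exactly the mechanism one expects to drive the entropy inequality. The paper's eigenvalue list, on the other hand, gives sharper spectral information about the resistive operator. Both arguments are valid; yours would in fact strengthen the paper by removing the reliance on computer algebra.
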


\begin{proof}
First, we consider the viscous fluxes of the resistive GLM-MHD system in \eqref{fluxes} separately
\begin{equation}
\bigstatevec{f}^v(\statevec{u},\vec{\nabla} \statevec{u}) = \left[\statevec{f}_1^v, \statevec{f}_2^v, \statevec{f}_3^v \right]^T
\end{equation}
with
\begin{equation*}
%\resizebox{\hsize}{!}{$
\begin{aligned}
\statevec{f}_1^v & = 
\left(
\begin{array}{c} 0 \\[0.1cm] 
\viscosity \left(2(v_{1})_x-\frac{2}{3} \left( (v_{1})_x+(v_{2})_y+(v_{3})_z \right)\right) \\[0.1cm]
\viscosity \left( (v_{1})_y + (v_{2})_x\right) \\ \viscosity \left( (v_{1})_z + (v_{3})_x\right) \\[0.1cm]
 C_1 \\[0.1cm]
  0 \\[0.1cm]
\resistivity \left(  (B_{2})_x - (B_{1})_y\right) \\[0.1cm]
\resistivity \left(  (B_{3})_x - (B_{1})_z\right) \\[0.1cm]
0\\[0.1cm]
\end{array}
\right)
 ~~~~~ 
 \statevec{f}_2^v = 
 \left(
\begin{array}{c} 0 \\[0.1cm]
\viscosity \left( (v_{1})_y + (v_{2})_x\right) \\[0.1cm]
\viscosity\left (2 (v_{2})_y - \frac{2}{3} \left( (v_{1})_x + (v_{2})_y + (v_{3})_z \right)\right) \\[0.1cm]
\viscosity \left( (v_{2})_z + (v_{3})_y \right) \\[0.1cm]
C_2  \\[0.1cm]
\resistivity  \left( (B_{1})_y - (B_{2})_x \right) \\[0.1cm] 
0 \\[0.1cm]
\resistivity \left(  (B_{3})_y-  (B_{2})_z\right)  \\[0.1cm]
0 \\[0.1cm]
\end{array} 
\right)
\\[0.125cm]
&\qquad\qquad\qquad\qquad\qquad\quad
\statevec{f}_3^v = 
\left(
\begin{array}{c} 0 \\[0.1cm] 
\viscosity \left( (v_{1})_z + (v_{3})_x \right) \\[0.1cm] 
\viscosity \left( (v_{2})_z + (v_{3})_y \right) \\[0.1cm]
\viscosity\left (2 (v_{3})_z - \frac{2}{3} \left( (v_{1})_x + (v_{2})_y + (v_{3})_z \right)\right) \\[0.1cm]
C_3 \\[0.1cm]
\resistivity\left( (B_{1})_z - (B_{3})_x \right) \\[0.1cm]
\resistivity\left( (B_{2})_z - (B_{3})_y \right) \\[0.1cm]
0 \\[0.1cm]
0 \\[0.1cm]
\end{array} 
\right)
\end{aligned}%$}
\end{equation*}
where
\begin{align*}
C_1  = &\viscosity\left (v_1\left(2(v_{1})_x - \frac{2}{3} \left((v_{1})_x + (v_{2})_y + (v_{3})_z \right)\right)+ v_2\left( (v_{2})_x + (v_{1})_y \right)+ v_3\left((v_{3})_x + (v_{1})_z \right)\right)+\frac{\kappa}{R}\left(\frac{p}{\rho}\right)_{\!\!x} \\
&+\resistivity \left ( B_2\left( (B_{2})_x - (B_{1})_y \right) +  B_3\left( (B_{3})_x - (B_{1})_z \right) \right), \\[0.1cm]
C_2 = &\viscosity\left (v_2\left(2(v_{2})_y - \frac{2}{3} \left( (v_{1})_x + (v_{2})_y + (v_{3})_z \right)\right)+  v_1\left( (v_{2})_x + (v_{1})_y \right)+  v_3\left( (v_{3})_y + (v_{2})_z \right)\right)+\frac{\kappa}{R}\left(\frac{p}{\rho}\right)_{\!\!y}\\
&+\resistivity \left ( B_1\left( (B_{1})_y - (B_{2})_x \right) +  B_3\left( (B_{3})_y - (B_{2})_z \right)\right), \\[0.1cm]
C_3 = &\viscosity\left(v_3\left(2(v_{3})_z - \frac{2}{3} \left( (v_{1})_x + (v_{2})_y + (v_{3})_z \right)\right)+ v_1\left( (v_{1})_z + (v_{3})_x \right)+ v_2\left( (v_{2})_z + (v_{3})_y \right)\right)+\frac{\kappa}{R}\left(\frac{p}{\rho}\right)_{\!\!z} \\[0.1cm]
&+\resistivity \left ( B_1\left( (B_{1})_z - (B_{3})_x \right) +B_2\left( (B_{2})_z - (B_{3})_y \right)\right).
\end{align*}
Using the vector of entropy variables from \eqref{entrvars}
\begin{equation*}
\statevec{w} = \left(w_1, \ldots , w_9 \right)^T = \left(\frac{\gamma-s}{\gamma-1} - \beta\|\spacevec{v}\|^2,~2\beta v_1,~2\beta v_2,~2\beta v_3,~-2\beta,~2\beta B_1,~2\beta B_2,~2\beta B_3,~2\beta \psi\right)^T
\end{equation*}
we find the following relations:
\begin{align*}
\vec{\nabla} v_1 & = - \frac{1}{w_5} \vec{\nabla} w_2 +  \frac{w_2}{w_5^2} \vec{\nabla} w_5 ~~~~~~ \vec{\nabla} v_2 = - \frac{1}{w_5} \vec{\nabla} w_3 +  \frac{w_3}{w_5^2} \vec{\nabla} w_5 ~~~~~~ \vec{\nabla} v_3 = - \frac{1}{w_5} \vec{\nabla} w_4 +  \frac{w_4}{w_5^2} \vec{\nabla} w_5 \\
\vec{\nabla} B_1 & = - \frac{1}{w_5} \vec{\nabla} w_6 +  \frac{w_6}{w_5^2} \vec{\nabla} w_5 ~~~~ \vec{\nabla} B_2 = - \frac{1}{w_5} \vec{\nabla} w_7 +  \frac{w_7}{w_5^2} \vec{\nabla} w_5 ~~~~~ \vec{\nabla} B_3 = - \frac{1}{w_5} \vec{\nabla} w_8 +  \frac{w_8}{w_5^2} \vec{\nabla} w_5 \\
\vec{\nabla} \left(\frac{p}{\rho}\right) & = \frac{1}{w_5^2} \vec{\nabla} w_5
\end{align*}
With some algebraic effort we can determine the matrices $\nineMatrix{K}_{ij} \in \mathbb{R}^{9\times 9}, (i,j=1,2,3)$ to express the viscous fluxes in terms of matrices times the gradients of entropy variables:
\begin{align}
\statevec{f}_1^v & = \nineMatrix{K}_{11} \frac{\partial \statevec{w}}{\partial x} + \nineMatrix{K}_{12} \frac{\partial \statevec{w}}{\partial y} + \nineMatrix{K}_{13} \frac{\partial \statevec{w}}{\partial z} \label{fviscK1}\\
\statevec{f}_2^v & = \nineMatrix{K}_{21} \frac{\partial \statevec{w}}{\partial x} + \nineMatrix{K}_{22} \frac{\partial \statevec{w}}{\partial y} + \nineMatrix{K}_{23} \frac{\partial \statevec{w}}{\partial z} \\
\statevec{f}_3^v & = \nineMatrix{K}_{31} \frac{\partial \statevec{w}}{\partial x} + \nineMatrix{K}_{32} \frac{\partial \statevec{w}}{\partial y} + \nineMatrix{K}_{33} \frac{\partial \statevec{w}}{\partial z} \label{fviscK3}
\end{align}
We collect all these $9\times 9$ block matrices into the matrix $\twentysevenMatrix{K}\in\mathbb{R}^{27\times 27}$
\begin{equation}\label{eq:matrixK}
\twentysevenMatrix{K} = \begin{pmatrix}
\nineMatrix{K}_{11} & \nineMatrix{K}_{12} & \nineMatrix{K}_{13} \\[0.1cm]
\nineMatrix{K}_{21} & \nineMatrix{K}_{22} & \nineMatrix{K}_{23} \\[0.1cm]
\nineMatrix{K}_{31} & \nineMatrix{K}_{32} & \nineMatrix{K}_{33} \\[0.1cm]
\end{pmatrix},
\end{equation}
which clearly yields
\begin{equation}
 \bigstatevec{f}^v = \bigstatevec{f}^v(\statevec{u},\vec{\nabla} \statevec{u}) = \twentysevenMatrix{K}  \vec{\nabla} \statevec{w}.
\label{Kgradient}
\end{equation}
For clarification, we present the first matrix
\begin{equation}\label{eq:matK11}
\resizebox{0.925\hsize}{!}{$
\nineMatrix{K}_{11} = \frac{1}{w_5}\begin{pmatrix}
0 & 0 & 0 & 0 & 0 & 0 & 0 & 0 & 0 \\[0.15cm]
0 & -\frac{4\resistivity}{3} & 0 & 0 & \frac{4\resistivity w_2}{3w_5} & 0 & 0 & 0 & 0 \\[0.15cm]
0 & 0 & -\viscosity & 0 & \frac{\viscosity w_3}{w_5} & 0 & 0 & 0 & 0 \\[0.15cm]
0 & 0 & 0 & -\viscosity & \frac{\viscosity w_4}{w_5} & 0 & 0 & 0 & 0 \\[0.15cm]
0 & \frac{4\viscosity w_2}{3w_5} & \frac{\viscosity w_3}{w_5} & \frac{\viscosity w_4}{w_5} & -\frac{4\viscosity w_2^2}{3w_5^2}-\frac{\viscosity w_3^2}{w_5^2}-\frac{\viscosity w_4^2}{w_5^2}+\frac{\kappa}{Rw_5}-\frac{\resistivity w_7^2}{w_5^2}-\frac{\resistivity w_8^2}{w_5^2} & 0 & \frac{\resistivity w_7}{w_5} & \frac{\resistivity w_8}{w_5} & 0 \\[0.15cm]
0 & 0 & 0 & 0 & 0 & 0 & 0 & 0 & 0 \\[0.15cm]
0 & 0 & 0 & 0 & \frac{\resistivity w_7}{w_5} & 0 & -\resistivity & 0 & 0 \\[0.15cm]
0 & 0 & 0 & 0 & \frac{\resistivity w_8}{w_5} & 0 & 0 & -\resistivity & 0 \\[0.15cm]
0 & 0 & 0 & 0 & 0 & 0 & 0 & 0 & 0 \\[0.15cm]
\end{pmatrix}.$}
\end{equation}
The other matrices $\nineMatrix{K}_{12},\ldots,\nineMatrix{K}_{33}$ are explicitly stated in \ref{Sec:DisMatrix}. It is straightforward to verify that the matrix $\twentysevenMatrix{K}$ is symmetric by inspecting the block matrices listed in \eqref{eq:matK11} and \eqref{eq:matK12} - \eqref{eq:matK33} where the following relationships hold
\begin{equation}\label{eq:symmMatrix}
\nineMatrix{K}_{11} = \nineMatrix{K}_{11}^T,\;\nineMatrix{K}_{22} = \nineMatrix{K}_{22}^T,\;\nineMatrix{K}_{33} = \nineMatrix{K}_{33}^T,\;\nineMatrix{K}_{12} = \nineMatrix{K}_{21}^T,\;\nineMatrix{K}_{13} = \nineMatrix{K}_{31}^T,\;\nineMatrix{K}_{23} = \nineMatrix{K}_{32}^T.
\end{equation}

To show that the matrix $\twentysevenMatrix{K}$ is positive semi-definite is more involved. We first note that it is possible to split the matrix \eqref{eq:matrixK} into the viscous terms associated with the Navier-Stokes equations and the resistive terms of the magnetic fields that arise in the resistive GLM-MHD equations. We exploit this fact and rewrite the total diffusion matrix into two pieces
\begin{equation}
\twentysevenMatrix{K} = \twentysevenMatrix{K}^{\text{NS}} + \twentysevenMatrix{K}^{\text{RMHD}},
\end{equation}
where all terms with $\viscosity$ are put in $\twentysevenMatrix{K}^{\text{NS}}$ and all terms with $\resistivity$ are in $\twentysevenMatrix{K}^{\text{RMHD}}$. 
It is easy to verify that the NS and RMHD block matrices are symmetric, as both satisfy \eqref{eq:symmMatrix}. A further convenience is that the Navier-Stokes part, $\twentysevenMatrix{K}^{\text{NS}}$, is known to be positive semi-definite \cite{Dutt1988}
\begin{equation}
\mathsf{q}^T\twentysevenMatrix{K}^{\text{NS}}\mathsf{q}\geq 0,\quad \forall\mathsf{q}\in\mathbb{R}^{27}.
\end{equation}
Thus, all that remains is to demonstrate that the additional resistive dissipation matrix, $\twentysevenMatrix{K}^{\text{RMHD}}$, is positive semi-definite. To do so we examine the eigenvalues of the system. We use the computer algebra system Maxima \cite{maxima} to find an explicit expression of the eigenvalues to be
\begin{equation}\label{eq:newEVs}
\lambda^{\text{RMHD}}_0 = 0,\;\lambda^{\text{RMHD}}_1 = \frac{2\resistivity p}{\rho},\;\lambda^{\text{RMHD}}_2 = \frac{\resistivity p\left(\|\spacevec{B}\|^2+2\right)}{\rho},\qquad\textrm{multiplicity:}\;\{24,1,2\}.
\end{equation}
Under the physical assumptions that $p,\rho>0$ and $\resistivity\geq 0$ we see that the eigenvalues \eqref{eq:newEVs} of the matrix $\twentysevenMatrix{K}^{\text{RMHD}}$ are all non-negative. Hence, the block matrix $\twentysevenMatrix{K}$ is symmetric and positive semi-definite. 
\end{proof}

With the ability to rewrite the viscous fluxes as a linear combination of the entropy variable gradients, we present our first main result:

\begin{thm}[Entropy inequality for the resistive GLM-MHD equations]\label{theorem:1}~\newline
The resistive GLM-MHD equations \eqref{resGLMMHD}, when contracted by the entropy variables, satisfy the entropy inequality \eqref{entrineq}, provided we choose $\JanC$ to be the Powell \cite{Powell1999} or Janhunen non-conservative term \eqref{Janhunen} and $\alpha\geq 0$ in \eqref{damping}.
\end{thm}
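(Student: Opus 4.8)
The plan is to contract the full resistive GLM-MHD system \eqref{resGLMMHD} from the left with the entropy variables \eqref{entrvars} and to account for the advective-plus-non-conservative part, the resistive flux, and the GLM damping source separately, verifying that the right-hand side of the resulting entropy balance is non-positive. For the \emph{advective and non-conservative contribution} I would invoke the ideal GLM-MHD analysis of Derigs et al.~\cite{Derigs2017}: the chain rule gives $\statevec{w}^T\statevec{u}_t = S_t$, while the generalized entropy pair property — which is exactly why the non-conservative term $\JanC(\vec{\nabla}\cdot\spacevec{B})$ is present — yields the algebraic identity $\statevec{w}^T\big(\vec{\nabla}\cdot\bigstatevec{f}^a + \JanC(\vec{\nabla}\cdot\spacevec{B})\big) = \vec{\nabla}\cdot(\spacevec{v}S)$, even if $\vec{\nabla}\cdot\spacevec{B}\neq 0$. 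The only place the choice of $\JanC$ enters is through the contracted source $\wDotJan = \statevec{w}^T\JanC = 2\beta(\spacevec{v}\cdot\spacevec{B})$ in the entropy flux potential \eqref{eq:entPotential}, and this expression is identical for the Powell and the Janhunen term; hence the argument is valid for both, as asserted in the theorem. For smooth solutions this reproduces \eqref{eq:PDEcontracted}; across shocks the advective part instead contributes a non-positive amount, consistent with \eqref{entrineq}.

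Next I would deal with the \emph{resistive flux} using Lemma~\ref{lemma1}. Substituting $\bigstatevec{f}^v = \twentysevenMatrix{K}\,\vec{\nabla}\statevec{w}$ and applying the product rule componentwise gives
\begin{equation*}
-\,\statevec{w}^T\!\left(\vec{\nabla}\cdot\bigstatevec{f}^v\right) = -\,\vec{\nabla}\cdot\spacevec{g}^v + \left(\vec{\nabla}\statevec{w}\right)^{\!T}\twentysevenMatrix{K}\,\vec{\nabla}\statevec{w},
\end{equation*}
where the viscous entropy flux has components $g^v_i = \statevec{w}^T\statevec{f}_i^v$ and the block structure \eqref{eq:matrixK} of $\twentysevenMatrix{K}$ has been used to recognize $\sum_{i=1}^3 (\partial_{x_i}\statevec{w})^T\statevec{f}_i^v = (\vec{\nabla}\statevec{w})^T\twentysevenMatrix{K}\,\vec{\nabla}\statevec{w}$. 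Since Lemma~\ref{lemma1} establishes that $\twentysevenMatrix{K}$ is symmetric and positive semi-definite, the quadratic form $(\vec{\nabla}\statevec{w})^T\twentysevenMatrix{K}\,\vec{\nabla}\statevec{w}$ is non-negative, so after it is moved to the right-hand side of the entropy balance it enters with the correct dissipative sign.

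It remains to contract the \emph{GLM damping source} \eqref{damping}: only its ninth component is nonzero, so $\statevec{w}^T\statevec{r} = w_9(-\alpha\psi) = -2\alpha\beta\psi^2 \le 0$, using $\beta = \rho/(2p) > 0$ and $\alpha\ge 0$. Collecting the three contributions yields, for smooth solutions,
\begin{equation*}
S_t + \vec{\nabla}\cdot\!\left(\spacevec{v}S - \spacevec{g}^v\right) = -\left(\vec{\nabla}\statevec{w}\right)^{\!T}\twentysevenMatrix{K}\,\vec{\nabla}\statevec{w} - 2\alpha\beta\psi^2 \;\le\; 0,
\end{equation*}
i.e.\ the entropy inequality \eqref{entrineq} with the ideal entropy flux $\spacevec{v}S$ augmented by the viscous/resistive entropy flux $-\spacevec{g}^v$; integrating over $\Omega$ with periodic or otherwise entropy-consistent boundary data makes the flux divergence vanish and gives $\tfrac{\mathrm{d}}{\mathrm{d}t}\int_\Omega S\,\mathrm{d}\vec{x}\le 0$. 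For solutions with shocks, the advective part of Step~1 only contributes additional dissipation, so the inequality is preserved.

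The genuinely new ingredient — that the resistive coefficient matrix $\twentysevenMatrix{K}$ is symmetric and positive semi-definite — is exactly the content of Lemma~\ref{lemma1}, which is already available; consequently the remaining work is essentially bookkeeping, and I do not expect a serious obstacle. The points that still require care are: checking that the GLM variable $\psi$ and the non-conservative term do not spoil the contraction in Step~1 (handled by the Powell/Janhunen-independence of $\wDotJan$ and the fact that the entropy-pair identity holds irrespective of $\vec{\nabla}\cdot\spacevec{B}$), keeping track of the viscous entropy flux $\spacevec{g}^v$ that now accompanies $\spacevec{v}S$ in the total entropy flux, and extracting the correct sign of $\statevec{w}^T\statevec{r}$ from the positivity of $\beta$.
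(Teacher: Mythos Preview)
Your proposal is correct and follows essentially the same route as the paper: contract with the entropy variables, use the ideal GLM-MHD entropy identity (cited here from \cite{Derigs2017}, while the paper splits it explicitly into Euler/MHD/GLM pieces), invoke Lemma~\ref{lemma1} to control the resistive term, and check the sign of the damping source. The only cosmetic difference is that you work pointwise via the product rule---yielding a local inequality with the augmented entropy flux $\spacevec{v}S-\spacevec{g}^v$---whereas the paper integrates over $\Omega$ first and then applies integration by parts with periodic boundaries; the two are equivalent, and your version in fact delivers the slightly sharper pointwise statement.
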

\begin{proof}
We start by contracting the resistive GLM-MHD system \eqref{resGLMMHD} with the entropy variables and integrate over the domain: 
\begin{equation}
 \iprod{\statevec{u}_t, \statevec{w}} + \iprod{\vec{\nabla} \cdot \bigstatevec{f}^a(\statevec{u}) + \JanC(\vec{\nabla}\cdot \spacevec{B}),\statevec{w}} = \iprod{\vec{\nabla} \cdot \bigstatevec{f}^v(\statevec{u},\vec{\nabla} \statevec{u}),\statevec{w}} + \iprod{\statevec{r},\statevec{w}}.
\label{weakformES}
\end{equation} 
Here, $\left\langle \cdot, \cdot \right\rangle$ denotes the $L^2(\Omega)$ inner product, e.g.,
\begin{equation}\label{eq:L2IP}
\iprod{\statevec{u}_t,\statevec{w}} = \int\limits_{\Omega}\statevec{w}^T\statevec{u}_t\,\textrm{dV}.
\end{equation}
From the definition of the entropy variables we have
\begin{equation}
\statevec{w}^T\!\statevec{u}_t = \left(\frac{\partial S}{\partial\statevec{u}}\right)^T\!\!\statevec{u}_t  = S_t.
\label{eq:wsContraction}
\end{equation}
Next, for clarity, we separate the advective flux into Euler, ideal MHD and GLM parts
\begin{equation}
\bigstatevec{f}^a(\statevec{u}) = \bigstatevec{f}^{a,\text{Euler}}+\bigstatevec{f}^{a,\text{MHD}}+\bigstatevec{f}^{a,\text{GLM}}.
\end{equation}
The Euler terms generate the divergence of the entropy flux, e.g., \cite{harten1983}
\begin{equation}\label{eq:fluxContractionEuler}
\statevec{w}^T\!\left( \spacevec{\nabla}\cdot\bigstatevec{f}^{a,\text{Euler}}\right)= \spacevec\nabla\cdot\spacevec{f}^\ent,
\end{equation}
the ideal MHD and non-conservative term cancel, e.g., \cite{Barth1999,Liu2017}
\begin{equation}\label{eq:fluxContractionMHD}
\statevec{w}^T\!\left( \spacevec{\nabla}\cdot\bigstatevec{f}^{a,\text{MHD}} + \JanC(\spacevec{\nabla}\cdot\spacevec{B})\right)= 0
\end{equation}
and the GLM terms vanish directly with the modifications introduced in \cite{Derigs2017}
\begin{equation}\label{eq:fluxContractionGLM}
\statevec{w}^T\!\left( \spacevec{\nabla}\cdot\bigstatevec{f}^{a,\text{GLM}} \right)= 0.
\end{equation}
The damping source term for the GLM divergence cleaning is zero in all but its ninth component, so we see
\begin{equation}
\statevec{w}^T\statevec{r} = -2\alpha\beta\psi^2.
\end{equation}
Thus, we have
\begin{equation}
 \iprod{S_t, 1} + \iprod{\vec{\nabla} \cdot \spacevec{f}^\ent, 1} = \iprod{\vec{\nabla} \cdot \bigstatevec{f}^v,\statevec{w}} + \iprod{-2\alpha \beta \psi^2, 1}.
\end{equation} 
Next, to demonstrate the entropy stability for the resistive GLM-MHD equations, we address the viscous flux components. So, we integrate by parts for the remaining inner product including the viscous fluxes and obtain
\begin{equation}
 \iprod{S_t, 1} + \iprod{\vec{\nabla} \cdot \spacevec{f}^\ent, 1} = \int\limits_{\partial \Omega} \statevec{w}^T\left(\bigstatevec{f}^v \cdot \spacevec{n} \right)\dS - \iprod{\bigstatevec{f}^v,\vec{\nabla}\statevec{w}} - \iprod{2\alpha \beta \psi^2, 1},
\end{equation} 
where $\spacevec{n}$ denotes the outward pointing normal vector at the domain boundaries. Assuming these to be periodic and using Lemma \ref{lemma1}, we arrive at
\begin{equation}
\iprod{S_t, t} + \iprod{\vec{\nabla} \cdot \spacevec{f}^\ent, 1} = - \iprod{\twentysevenMatrix{K}\vec{\nabla}\statevec{w}, \vec{\nabla}\statevec{w}} - \iprod{ 2\alpha \beta \psi^2, 1}.
\end{equation} 
Since $\twentysevenMatrix{K}$ is symmetric and positive semi-definite and $\alpha,\beta\geq 0$, all inner products on the right hand side are positive and thus
\begin{equation}
\iprod{S_t, 1} + \iprod{\vec{\nabla} \cdot \spacevec{f}^\ent, 1} \leq 0.
\end{equation} 
\end{proof}

\begin{rem}
Splitting the flux contraction in the continuous entropy analysis is useful to keep track of which terms contribute to the entropy flux and which terms cancel. This will also be the case in the discrete entropy analysis of the high-order DG approximation. As such, the entropy flux potential is split into Euler, ideal MHD and GLM components
\begin{equation}\label{eq:entPotential2}
\spacevec{\Psi} = \spacevec{\Psi}^{\text{Euler}} + \spacevec{\Psi}^{\text{MHD}} + \spacevec{\Psi}^{\text{GLM}},
\end{equation}
where
\begin{align}%\label{eq:entPotential3}
\spacevec{\Psi}^{\text{Euler}} &= \statevec{w}^T\bigstatevec{f}^{a,\text{Euler}} - \spacevec{f}^\ent,\label{eq:EulerEntFluxPot}\\[0.125cm]
\spacevec{\Psi}^{\text{MHD}} &= \statevec{w}^T\bigstatevec{f}^{a,\text{MHD}} + \wDotJan \spacevec{B},\label{eq:MHDEntFluxPot}\\[0.125cm]
\spacevec{\Psi}^{\text{GLM}} &= \statevec{w}^T\bigstatevec{f}^{a,\text{GLM}}.\label{eq:GLMEntFluxPot}
\end{align}
\end{rem}

In summary, we have demonstrated that the resistive GLM-MHD equations satisfy an entropy inequality. To do so, we separated the advective contributions into Euler, ideal MHD and GLM pieces and considered the viscous contributions separately, which served to clarify how each term contributed to the entropy analysis. A major result is that it is possible to rewrite the resistive terms of the three-dimensional system in an entropy consistent way to demonstrate that those terms are entropy dissipative. We will use an identical splitting of the advective and diffusive terms in the discrete entropy stability proofs in Sec. \ref{Sec:ESDGres} to directly mimic the continuous analysis. However, we first must build the necessary components of a discontinuous Galerkin approximation for the resistive GLM-MHD equations.

%%%%%%%%%%%%%%%%%%%%%%%%%%%%%%%%%%%%%%%%%%%%%%%%%%%%%%%%%%%%%%%%%%%%%%%%
\section{Split form discontinous Galerkin approximation}\label{Sec:DG}
%%%%%%%%%%%%%%%%%%%%%%%%%%%%%%%%%%%%%%%%%%%%%%%%%%%%%%%%%%%%%%%%%%%%%%%%

The goal of this paper is to construct an entropy stable high-order DG method for the resistive GLM-MHD system \eqref{resGLMMHD}. 

In this section, we introduce the building blocks of our nodal DG discretization. Most importantly we highlight that, as long as the nodal DG approximation is built with the Legendre-Gauss-Lobatto nodes, the discrete derivative matrix and the discrete mass matrix satisfy the summation-by-parts (SBP) property for any polynomial order \cite{gassner_skew_burgers}. This is a key property as it allows us to use results from the work of Fisher et al. \cite{fisher2013_2} and Fisher and Carpenter \cite{fisher2013}. We follow the notation introduced in \cite{Gassner:2016ye} and present a split form DG approximation, where we have two numerical fluxes, one at the surface and one inside the volume for the special split form volume integral. Carpenter and Fisher showed that when using an entropy conservative finite volume flux for the numerical volume flux in a SBP discretization, the property of entropy conservation extends to the high-order SBP method, i.e. to the higher order DG method.

In the following we introduce the so-called nodal discontinuous Galerkin collocation spectral element method (DGSEM). We use tensor product elements and derive the strong form including the SBP property of the volume discretization. Furthermore, we restrict the DGSEM discretization to a two-dimensional Cartesian mesh with uniform squared elements. So, the first step in the discretization is to subdivide the computational domain $\Omega\subset\mathbb{R}^2$ into non-overlapping square elements $E_1,\ldots,E_K$ and map each of these elements to the reference element $E_0 = \left[-1,1\right]^2$. The bilinear mappings $X_k:E_0 \rightarrow E_k, X_k(\xi,\eta) = (x,y)$ are defined as
\begin{equation}
 X_k(\xi,\eta) = \frac{1}{4} \left[\spacevec{x}_1(1-\xi)(1-\eta)+\spacevec{x}_2(1+\xi)(1-\eta)+\spacevec{x}_3(1+\xi)(1+\eta)+\spacevec{x}_4(1-\xi)(1+\eta)\right]
\label{mapping}
\end{equation}
for $k=1,\ldots,K$, where $\{\spacevec{x}_1,\spacevec{x}_2,\spacevec{x}_3,\spacevec{x}_4\}$ are the four corners of the element $E_k$. Since we consider the uniform Cartesian case, we find the Jacobian of the mappings to be constant
\begin{equation}
J = \frac{1}{4} \Delta x \Delta y = \frac{\Delta x^2}{4}
\label{Jacobian}
\end{equation}
for equal element side lengths $\Delta x = \Delta x_k = \Delta y_k = \Delta y$, $k=1,\ldots, K$.

Next, on every element, each conserved variable is approximated by a polynomial of degree $N$ in each spatial direction on $E_0$
\begin{equation}
\statevec{u}(x,y,t)|_{E_k} = \statevec{u}(\xi,\eta,t) \approx \sum_{i,j = 0}^N \statevec{U}_{ij} \ell_i(\xi) \ell_j(\eta)\equiv \statevec{U},
\label{DG-approx}
\end{equation} 
where $\statevec{U}_{ij} \approx \statevec{u}(x,y,t)|_{E_k}$ are the time dependent nodal degrees of freedom for the considered element $E_k$ and time $t$. The interpolating Lagrange basis functions are defined by
\begin{equation}
\ell_i(\xi) = \prod_{\stackrel{j = 0}{j \neq i}}^N \frac{\xi-\xi_j}{\xi_i-\xi_j}\quad \text{ for } i=0,\ldots,N.
\label{lagrange}
\end{equation}
These basis functions are discretely orthogonal and satisfy the Kronecker-delta property, i.e. $\ell_j(\xi_i) = \delta_{ij}$ with $\delta_{ij} = 1$ for $i=j$ and $\delta_{ij}=0$ for $i\neq j$. To perform the discrete differentiation for polynomials of degree $N$, exactly, we introduce the polynomial derivative matrix by
\begin{equation}
\dmat_{ij} = \frac{\partial \ell_j}{\partial \xi} \Bigg{|}_{\xi = \xi_i} \text{ for } i,j=0,\ldots,N.
\label{Dmatrix}
\end{equation}
The nodal DGSEM is built from the weak formulation of the governing equations where any integrals in the inner products are approximated with a high-order Legendre-Gauss-Lobatto (LGL) quadrature
\begin{equation}\label{eq:quad}
\iprod{\statevec{f},\statevec{g}} = \int\limits_{E_0}\statevec{g}^T\statevec{f}\,\text{d}\xi\text{d}\eta \approx \sum\limits_{i,j=0}^N\statevec G^T(\xi_i,\eta_j)\statevec F(\xi_i,\eta_j)\omega_i\omega_j\equiv\iprodN{\statevec F,\statevec G},
\end{equation}
where $\left\{\xi_i\right\}_{i=0}^N, \left\{\eta_j\right\}_{j=0}^N$ are the LGL quadrature nodes and $\left\{\omega_i\right\}_{i=0}^N,\left\{\omega_j\right\}_{j=0}^N$ are the LGL quadrature weights. The choice of LGL quadrature is necessary, because then the discrete derivative matrix and the discrete mass matrix satisfy the SBP property for any polynomial order \cite{gassner_skew_burgers}
\begin{equation}
\mmat \dmat + (\mmat \dmat)^T = \qmat+\qmat^T = \bmat = \text{diag} (-1,0,\ldots,0,1).
\label{SBP}
\end{equation}
Here, the collocation of interpolation and quadrature nodes in the approximation introduces the discrete diagonal mass matrix
\begin{equation}
\mmat=\text{diag} (\omega_0, \ldots , \omega_N).
\label{Mmatrix}
\end{equation}
We also define the SBP matrix $\qmat$ and the boundary matrix $\bmat$ in \eqref{SBP}.

Starting from the original resistive GLM-MHD system we now have all ingredients to formulate the standard DGSEM approximation. We first multiply by a test function, $\testfuncOne$, and integrate over the domain $\Omega$
\begin{equation}
 \iprod{\statevec{u}_t, \testfuncOne} + \iprod{\vec{\nabla} \cdot \bigstatevec{f}^a(\statevec{u}), \testfuncOne} - \iprod{\vec{\nabla} \cdot \bigstatevec{f}^v(\statevec{u},\vec{\nabla} \statevec{u}), \testfuncOne}  +  \iprod{\JanC(\vec{\nabla}\cdot \spacevec{B}),\testfuncOne} = \iprod{\statevec{r},\testfuncOne}.
\end{equation} 
The viscous fluxes require the gradients of the solution. Thus, we introduce an auxiliary variable for the gradient of the entropy variables, since we know from the continuous analysis, that we can also express the viscous fluxes by the entropy variable gradients:  
\begin{equation}
\bigstatevec{q} = \vec{\nabla}\statevec{w}
\end{equation}
Now we can finally convert the governing equations into a first order system, condense the notation on the fluxes, multiply the evolution of the gradients by a different test function, $\testfuncTwo$, and integrate over $\Omega$ to obtain
\begin{equation}\label{eq:weakFormCont}
\begin{split}
\iprod{\statevec{u}_t, \testfuncOne} + \iprod{\vec{\nabla} \cdot \bigstatevec{f}^a, \testfuncOne} - \iprod{\vec{\nabla} \cdot \bigstatevec{f}^v, \testfuncOne}  +  \iprod{\JanC(\vec{\nabla}\cdot \spacevec{B}),\testfuncOne} & = \iprod{\statevec{r},\testfuncOne} \\
\iprod{\bigstatevec{q},\testfuncTwo} - \iprod{\vec{\nabla} \statevec{w},\testfuncTwo} & = 0
\end{split}
\end{equation} 

The discretization of the system is done by mapping into the reference element with \eqref{mapping} and replacing all quantities, e.g. the solution $\statevec{u}$, by polynomial interpolations in terms of \eqref{DG-approx}. Also, the integrals in the weak form \eqref{eq:weakFormCont} are approximated by the LGL quadrature rule \eqref{eq:quad}: 
\begin{equation}\label{eq:firstWeak}
\begin{split}
 \iprodN{J\statevec{U}_t, \testfuncOne} + \iprodN{\vec{\nabla}_\xi \cdot \bigcontravec{F}^a, \testfuncOne} - \iprodN{\vec{\nabla}_\xi \cdot \bigcontravec{F}^v, \testfuncOne} +  \iprodN{\JanD(\vec{\nabla}_\xi \cdot \contraspacevec{B}),\testfuncOne} & = \iprod{J\statevec{R},\testfuncOne} \\
\iprodN{J \bigstatevec{Q},\testfuncTwo} - \iprodN{\vec{\nabla}_\xi \contravec{\statevec{W}},\testfuncTwo} & = 0
\end{split}
\end{equation}
Since we restrict to uniform Cartesian meshes in two dimensions, the discretized, mapped variables in the collocated DGSEM read:
\begin{equation}
\bigcontravec{F}^a \approx \frac{\Delta x}{2}\bigstatevec{f}^a(\statevec{U}) ~~~~~~~ \bigcontravec{F}^v \approx \frac{\Delta x}{2}\bigstatevec{f}^v(\statevec{U},\bigstatevec{Q}) ~~~~~~~ \contrastatevec{W}\approx \frac{\Delta x}{2}\statevec{w} ~~~~~~~ \contraspacevec{B}\approx \frac{\Delta x}{2}\spacevec{B}
\end{equation}

From the SBP property \eqref{SBP} we have the relation
\begin{equation}\label{eq:otherSBP}
\dmat = \mmat^{-1}\bmat - \mmat^{-1}\dmat^T\mmat,
\end{equation}
where we use the fact that the mass matrix $\mmat$ is diagonal. Rewriting the polynomial derivative matrix by \eqref{eq:otherSBP} allows us to move discrete derivatives off the fluxes and the non-conservative term onto the test functions in \eqref{eq:firstWeak}. This process creates surface and volume contributions in the DGSEM. However, we must resolve the discontinuities in the surface approximation across element interfaces which naturally arise in DG methods. To do so, we introduce numerical flux functions $\bigstatevec{F}^{a,\ast},\bigcontravec{F}^{v,\ast},\contrastatevec{W}^\ast$. Additionally, for the non-conservative term we define the surface evaluation to be $\Bstar$ and obtain the discrete weak form:
\begin{equation}
\resizebox{0.925\hsize}{!}{$
\begin{split}
\iprodN{J\statevec{U}_t, \testfuncOne} & = - \int\limits_{\partial E,N} \testfuncOne^T \left[\bigcontravec{F}^{a,\ast}-\bigcontravec{F}^{v,\ast}\right] \cdot \spacevec{n} \dS - \int\limits_{\partial E,N} \testfuncOne^T \Bstar \cdot \spacevec{n} \dS + \left\langle \bigcontravec{F}^a, \vec{\nabla}_\xi \testfuncOne\right\rangle_N - \iprodN{\bigcontravec{F}^v, \vec{\nabla}_\xi \testfuncOne} \\[0.1cm]
& ~~~ +  \iprodN{\contraspacevec{B},\vec{\nabla}_\xi \left(\testfuncOne^T\JanD\right)} + \iprodN{J \statevec{R},\testfuncOne} \\[0.1cm]
\iprodN{J \bigstatevec{Q},\testfuncTwo} & = \int\limits_{\partial E,N} \testfuncTwo^T \contrastatevec{W}^\ast \cdot \spacevec{n} \dS -\iprodN{\contrastatevec{W},\vec{\nabla}_\xi\cdot \testfuncTwo}
\end{split}$}
\end{equation}  
The particular choices for the surface discretization of the non-conservative term and the numerical interface fluxes will be specified in the next section. Moreover, we introduce a compact notation for the surface terms
\begin{equation}
 \label{eq:discrete_surfint}
 \int\limits_{\partial E,N} {\left(\bigcontravec F \cdot \spacevec n\right)\,\dS} 
 = \sum\limits_{j = 0}^N {\left. {{\omega_{j}}{\contrastatevec F_{1}}\left( {\xi    ,{\eta_j}} \right)} \right|_{\xi   =  - 1}^1}  
  + \sum\limits_{i = 0}^N {\left. {{\omega_{i}}{\contrastatevec F_{2}}\left( {{\xi_i},\eta    } \right)} \right|_{\eta  =  - 1}^1}  
\equiv\int\limits_N {\left. {{\contrastatevec F_{1}}d\eta } \right|} _{\xi    =  - 1}^1 
       + \int\limits_N {\left. {{\contrastatevec F_{2}}d\xi  } \right|} _{\eta   =  - 1}^1 .
\end{equation}

As a final step we apply the SBP property once more to the advective flux and non-conservative term in the first equation and obtain the discrete strong form of the DGSEM: 
\begin{equation}
\resizebox{0.925\hsize}{!}{$
\begin{split}
 \iprodN{J\statevec{U}_t, \testfuncOne} & = - \int\limits_{\partial E,N} \testfuncOne^T \left[\bigcontravec{F}^{a,\ast}-\bigcontravec{F}^a \right]\cdot \spacevec{n} \dS - \iprodN{\vec{\nabla}_\xi \cdot \bigcontravec{F}^a, \testfuncOne} + \int\limits_{\partial E,N} \testfuncOne^T \left[\bigcontravec{F}^{v,\ast}-\bigcontravec{F}^v \right]\cdot \spacevec{n} \dS + \iprodN{\vec{\nabla}_\xi \cdot \bigcontravec{F}^v, \testfuncOne} \\[0.1cm]
& ~~~ - \int\limits_{\partial E,N} \testfuncOne^T \left[\Bstar - \JanD\contraspacevec{B}\right]\cdot \spacevec{n} \dS - \iprodN{\JanD(\vec{\nabla}_\xi\cdot \contraspacevec{B}),\testfuncOne} + \iprodN{J \statevec{R},\testfuncOne} \\[0.1cm]
\iprodN{J \bigstatevec{Q},\testfuncTwo} & = \int\limits_{\partial E,N} \testfuncTwo^T \contrastatevec{W}^\ast \cdot \spacevec{n} \dS - \iprodN{\contrastatevec{W},\vec{\nabla}_\xi \cdot\testfuncTwo} 
\end{split}$}
\end{equation}

Before we can build a DGSEM approximation that is entropy stable for the resistive GLM-MHD system, we approximate the volume integral contribution of the advective flux terms in a split form fashion \cite{fisher2013,Gassner:2016ye} by
\begin{equation}
\label{eq:entropy-cons_volint}
\left\{\spacevec\nabla _\xi \cdot \bigcontravec{F}^a\right\}_{ij} = \left\{\spacevec{\mathbb{D}}\bigcontravec{F}^{a,\#}\right\}_{ij} = 2\sum_{m=0}^N \dmat_{im}\contrastatevec{F}_1^{a,\#}(\statevec{U}_{ij}, \statevec{U}_{mj}) + 2\sum_{m=0}^N \dmat_{jm} \contrastatevec{F}_2^{a,\#}(\statevec{U}_{ij}, \statevec{U}_{im})
\end{equation}
for $i,j=0,\ldots N$, where we introduce the two-point, symmetric volume flux $\bigcontravec{F}^{a,\#}$, which is also specified in the next section. The split formulation of the DG approximation \eqref{eq:entropy-cons_volint} offers the flexibility in the DGSEM to satisfy auxiliary properties such as kinetic energy preservation \cite{kennedy2008,jameson2008,pirozzoli2011}, entropy stability \cite{carpenter_esdg,Liu2017} or both \cite{Ray2017,Gassner:2016ye}. Kinetic energy preservation has proven to be of interest in the realm of turbulence modeling, e.g. \cite{flad2017}. Split forms also offer enhanced robustness for under-resolved computations, e.g. for the compressible Euler equations \cite{Gassner:2016ye}, because they include a built-in dealiasing mechanism for non-linear problems while remaining conservative for approximations that satisfy the SBP property, e.g. \cite{fisher2013,carpenter_esdg,Gassner:2016ye}. In this work we restrict the discussion to a provably entropy stable DGSEM for the resistive GLM-MHD equations as there is a known equivalence between split forms and entropy stability \cite{tadmor1984}.

In contrast, the volume contributions of the non-conservative terms and the viscous fluxes in the resistive GLM-MHD equations are approximated by the standard DG derivatives
\begin{equation}\label{eq:nonConsDerivative}
\begin{split}
\left\{\JanD(\spacevec\nabla_\xi\cdot\contraspacevec{B})\right\}_{ij} & = \left\{\JanD \spacevec{\mathbb{D}}^S\contraspacevec{B}\right\}_{ij}= \sum_{m=0}^N \dmat_{im}\,\JanD_{ij} \left(\tilde{B}_1\right)_{mj} + \sum_{m=0}^N \dmat_{jm}\,\JanD_{ij} \left(\tilde{B}_2\right)_{im} \\
\left\{\spacevec\nabla _\xi \cdot \bigcontravec F^v\right\}_{ij} & = \left\{\spacevec{\mathbb{D}}^S\bigcontravec{F}^{v}\right\}_{ij} = \sum_{m=0}^N \dmat_{im}\,\left(\contrastatevec{F}^{v}_1\right)_{mj} + \sum_{m=0}^N \dmat_{jm}\,\left(\contrastatevec{F}^{v}_2\right)_{im}
\end{split}
\end{equation}
for $i,j=0,\ldots N$.

We finally obtain the baseline DGSEM approximation, needed in the next section to prove entropy stability for the resistive GLM-MHD equations, by the divergence approximations  \eqref{eq:entropy-cons_volint} and \eqref{eq:nonConsDerivative}, the yet arbitrary surface and volume numerical fluxes and the additional magnetic fields at the element boundaries to give
\begin{equation}\label{eq:schemeFinal}
\resizebox{0.925\hsize}{!}{$
\begin{split}
 \iprodN{J\statevec{U}_t, \testfuncOne} & = - \int\limits_{\partial E,N} \testfuncOne^T \left[\bigcontravec{F}^{a,\ast}-\bigcontravec{F}^a \right]\cdot \spacevec{n} \dS - \iprodN{\spacevec{\mathbb{D}} \bigcontravec{F}^{a,\#}, \testfuncOne} + \int\limits_{\partial E,N} \testfuncOne^T \left[\bigcontravec{F}^{v,\ast}-\bigcontravec{F}^v \right]\cdot \spacevec{n} \dS + \iprodN{\mathbb{D}^S \bigcontravec{F}^v, \testfuncOne} \\[0.1cm]
& ~~~ - \int\limits_{\partial E,N} \testfuncOne^T \left[\Bstar - \JanD\contraspacevec{B}\right]\cdot \spacevec{n} \dS - \iprodN{\JanD \spacevec{\mathbb{D}}^S \contraspacevec{B},\testfuncOne} + \iprodN{J \statevec{R},\testfuncOne} \\[0.1cm]
\iprodN{J \bigstatevec{Q},\testfuncTwo} & = \int\limits_{\partial E,N} \testfuncTwo^T \contrastatevec{W}^\ast \cdot \spacevec{n} \dS - \iprodN{\contrastatevec{W},\vec{\nabla}_\xi \cdot \testfuncTwo}
\end{split}$}
\end{equation}  

The resulting ordinary differential equations are integrated in time by an explicit 4th order low storage Runge-Kutta method of Carpenter and Kennedy \cite{Carpenter&Kennedy:1994} for each element $k=1,\ldots,K$. One possible choice for a stable explicit time step reads
\begin{equation}
\Delta t = \min\{\Delta t^a,\Delta t^v\},
\end{equation} 
where we select the advective time step using the Courant, Friedrichs and Lewy (CFL) condition, which for DG type approximations in two spatial dimensions on square Cartesian meshes is \cite{cockburn2001,gassner2011}
\begin{equation}\label{eq:CFL_timestep_2D}
\Delta t^a \le \frac{\mathtt{CFL}}{|\lambda^a_\mathrm{max}|}\left(\frac{\Delta x}{2N+1}\right).
\end{equation}
Here $\lambda^a_\mathrm{max}$ is the largest advective wave speed at the current time traveling in either the $\{x,y\}$ direction, $N$ is the polynomial order of the approximation and $\mathtt{CFL}\in(0,1]$ is an adjustable coefficient. A similar condition is used for the viscous time step selection \cite{gassner2011}
\begin{equation}\label{eq:CFL_timestep_2D_Visc}
\Delta t^v \le \frac{\mathtt{DFL}}{|\lambda^v_\mathrm{max}|}\left(\frac{\Delta x}{2N+1}\right)^2,
\end{equation}
where $\lambda^v_\mathrm{max}$ is the largest eigenvalue of the viscous flux Jacobian in either the $\{x,y\}$ direction and $\mathtt{DFL}\in(0,1]$ is a conventional coefficient. Full details on the advective and viscous time step selection can be found in \cite{Derigs2017,altmann2012,dumbser2009}.

Finally, it is still open how to select the numerical surface, volume and viscous fluxes as well as the evaluation of the surface non-conservative term $\Bstar$ in \eqref{eq:schemeFinal} to obtain entropy stability. The choice of numerical surface and volume fluxes is based on previous work from Derigs et al. \cite{Derigs2017} to obtain entropy stability for the ideal GLM-MHD equations and Carpenter et al. \cite{carpenter_esdg} to retain high-order accuracy in the DGSEM. However, we highlight the contributions of the GLM terms in the DG context and how they affect the discrete entropy in the next sections. We note that Liu et al. \cite{Liu2017} directly presented the final form of the non-conservative surface term. We, however, provide a motivation for the non-conservative surface discretization in the context of the split form DG framework. We demonstrate in this work that there is a proper choice to evaluate the coupling term for the non-conservative term at element interfaces such that the term of Liu et al. is recovered and the entropy analysis holds. Further, special attention is given to selecting the viscous numerical fluxes and proving entropy stability for the discrete resistive terms, which expands on the recent results of Gassner et al. for the compressible Navier-Stokes equations \cite{Gassner2017}.

%%%%%%%%%%%%%%%%%%%%%%%%%%%%%%%%%%%%%%%%%%%%%%%%%%%%%%%%%%%%%%%%%%%%%%%%
\section{Entropy stable DG scheme for resistive GLM-MHD}\label{Sec:ESDGres}
%%%%%%%%%%%%%%%%%%%%%%%%%%%%%%%%%%%%%%%%%%%%%%%%%%%%%%%%%%%%%%%%%%%%%%%%

Much work in the numerics community has been invested over the years to develop approximations of non-linear hyperbolic PDE systems that remain thermodynamically consistent, e.g. \cite{Chandrashekar2015,Fjordholm2011,gassner_skew_burgers,tadmor2016,wintermeyer2017,Winters2016}. This began with the pioneering work of Tadmor \cite{tadmor1984,Tadmor1987_2} to develop low-order finite volume approximations. Extension to higher spatial order was recently achieved in the context of DG methods for the compressible Navier-Stokes equations \cite{carpenter_esdg,Gassner:2016ye} as well as the ideal MHD equations \cite{Liu2017,Valencia2017}. Remarkably, Carpenter and Fisher et al. showed that the conditions to develop entropy stable approximations at low-order immediately apply to high-order methods provided the derivative approximation satisfies the SBP property \cite{carpenter_esdg,fisher2013,fisher2013_2}. 

In order to get entropy stability we start with the derived split form DG approximation \eqref{eq:schemeFinal} and contract into entropy space by replacing the first test function with the interpolant of the entropy variables and the second one with the interpolant of the viscous fluxes to obtain:
\begin{equation}\label{eq:schemeFinalcontr}
\begin{aligned}
\iprodN{J\statevec{U}_t, \statevec{W}} & = - \iprodN{\spacevec{\mathbb{D}} \bigcontravec{F}^{a,\#}, \statevec{W}} - \iprodN{\JanD \spacevec{\mathbb{D}}^S \contraspacevec{B}, \statevec{W}} + \iprodN{J \statevec{R}, \statevec{W}} \\[0.125cm]
& ~~~  - \int\limits_{\partial E,N} \statevec{W}^T \left[\bigcontravec{F}^{a,\ast}-\bigcontravec{F}^a \right]\cdot \spacevec{n} \dS - \int\limits_{\partial E,N}  \statevec{W}^T \left[\Bstar - \JanD\contraspacevec{B}\right]\cdot \spacevec{n} \dS  \\[0.125cm]
& ~~~ + \iprodN{\mathbb{D}^S \bigcontravec{F}^v, \statevec{W}} + \int\limits_{\partial E,N} \statevec{W}^T \left[\bigcontravec{F}^{v,\ast}-\bigcontravec{F}^v \right]\cdot \spacevec{n} \dS \\[0.125cm]
\iprodN{J \bigstatevec{Q},\bigcontravec{F}^v} & = \int\limits_{\partial E,N} \left(\bigcontravec{F}^v\right)^T \statevec{W}^\ast \cdot \spacevec{n} \dS - \iprodN{\statevec{W},\mathbb{D}^S \bigcontravec{F}^v} 
\end{aligned}
\end{equation}
Here, we have intentionally arranged the advective plus non-conservative volume parts, the advective plus non-conservative surface parts and the viscous parts of the first equation into separate rows.

The time derivative term in \eqref{eq:schemeFinalcontr} is the time rate of change of the entropy in the element. Assuming that the chain rule with respect to differentiation in time holds (time continuity), we use the contraction property of the entropy variable \eqref{eq:wsContraction} at each LGL node within the element to see that on each element $k=1,\ldots,K$ we have
\begin{equation}\label{totalEntr}
\iprodN{J\statevec{U}_t,\statevec{W}} = J\!\!\sum\limits_{i,j=0}^N \omega_i \omega_j \statevec{W}^T_{ij}\frac{d \statevec{U}_{ij}}{d t} = J\!\!\sum\limits_{i,j=0}^N \omega_i \omega_j \frac{d S_{ij}}{d t} = \iprodN{JS_t,1}.
\end{equation}
To obtain the total discrete entropy we sum over all elements
\begin{equation}
\frac{d \overline{S}}{d t}\equiv\sum\limits_{k=1}^K\iprodN{J^kS^k_t,1}.
\end{equation}
The final goal of this section is to demonstrate the entropy stability of the contracted DG approximation \eqref{eq:schemeFinalcontr} for the resistive GLM-MHD system, i.e.,
\begin{equation}\label{eq:whatWeWant}
\frac{d \overline{S}}{d t} \leq 0.
\end{equation}

To build the result \eqref{eq:whatWeWant} we examine each row in the first equation of \eqref{eq:schemeFinalcontr} incrementally. In Sec. \ref{Sec:AdvParts}, we demonstrate the behavior of the advective and non-conservative volume as well as interface contributions. Throughout this section we highlight how the GLM divergence cleaning terms affect the approximation. Then, in Sec. \ref{Sec:ResParts}, we account for the viscous components of the scheme and demonstrate the second main result of this work. In particular, we find that the entropy stability of the scheme \eqref{eq:schemeFinalcontr} depends on the ability to rewrite the viscous fluxes in terms of the gradient of the entropy variables \eqref{Kgradient}. Also, the selection of the viscous numerical flux functions must take the entropy variables into account, as was previously noted for the compressible Navier-Stokes equations by Gassner et al. \cite{Gassner2017}. 

%%%%%%%%%%%%%%%%%%%%%%%%%%%%%%%%%%%%%%%%%%%%%%%%%%%%%%%%%%%%%%%%%%%%%%%%
\subsection{Analysis of the advective parts}\label{Sec:AdvParts}
%%%%%%%%%%%%%%%%%%%%%%%%%%%%%%%%%%%%%%%%%%%%%%%%%%%%%%%%%%%%%%%%%%%%%%%%

This section focuses on the advective parts in the contracted DG approximation \eqref{eq:schemeFinalcontr}. First, we select the specific form of the advective interface and volume numerical fluxes in section \ref{Sec:Fluxes}. In the next section, \ref{Sec:Vol}, we show that the volume contributions of the entropy conservative flux of the Euler terms become the entropy flux at the surfaces, the ideal MHD and non-conservative terms cancel and the GLM terms vanish. By splitting the entropy conservative flux into three terms we explicitly see how the discrete contraction into entropy space mimics the results of the continuous analysis, i.e., \eqref{eq:fluxContractionEuler}, \eqref{eq:fluxContractionMHD} and \eqref{eq:fluxContractionGLM}. Next, with the knowledge that the volume contributions move to the interfaces, Sec. \ref{Sec:Surf} addresses all the surface contributions and we select the form of the coupling for the non-conservative term. By summing over all the elements and applying the definition of the entropy conservative fluxes we cancel all the remaining advective and non-conservative terms. 

%%%%%%%%%%%%%%%%%%%%%%%%%%%%%%%%%%%%%%%%%%%%%%%%%%%%%%%%%%%%%%%%%%%%%%%%
\subsubsection{Numerical entropy fluxes}\label{Sec:Fluxes}
%%%%%%%%%%%%%%%%%%%%%%%%%%%%%%%%%%%%%%%%%%%%%%%%%%%%%%%%%%%%%%%%%%%%%%%%

A consistent, symmetric numerical flux function, which is entropy conservative for the ideal GLM-MHD equations, is derived in the finite volume context \cite{Derigs2017} and serves as the backbone for the high-order entropy stable DGSEM considered in this work. First, we define the notation for the jump operator, arithmetic and logarithmic means between a left and right state, $a_L$ and $a_R$, respectively
\begin{equation}
\jump{a} := a_R-a_L, ~~~~~~~~~ \avg{a} := \frac{1}{2}(a_L+a_R), ~~~~~~~~ a^{\ln} := \jump{a}/\jump{\ln(a)},
\label{means}
\end{equation}
where a numerically stable procedure to evaluate the logarithmic mean is given in \cite{IsmailRoe2009}. We present the entropy conserving (EC) numerical flux in the first spatial direction to be
\begin{equation}\label{ECFlux}
\statevec{f}^{\ec} = \begin{pmatrix} \rho^{\ln} \avg{v_1} \\ \rho^{\ln} \avg{v_1}^2 - \avg{B_1}^2 + \overline{p} + \frac{1}{2} \Big(\avg{B_1 B_1} + \avg{B_2 B_2} + \avg{B_3 B_3}\Big)\\ \rho^{\ln} \avg{v_1} \avg{v_2} - \avg{B_1} \avg{B_2} \\ \rho^{\ln} \avg{v_1} \avg{v_3} - \avg{B_1} \avg{B_3} \\ f_5^{\ec} \\ c_h \avg{\psi} \\ \avg{v_1}\avg{B_2} - \avg{v_2}\avg{B_1}\\ \avg{v_1}\avg{B_3} - \avg{v_3}\avg{B_1}\\c_h \avg{B_1} \end{pmatrix}
\end{equation}
with 
\begin{equation}
\begin{split}
f_5^{\ec} = & f_1^{\ec}\bigg[\frac{1}{2 (\gamma-1) \beta^{\ln}} - \frac{1}{2} \left(\avg{v_1^2} + \avg{v_2^2} + \avg{v_3^2}\right) \bigg] + f_2^{\ec} \avg{v_1} + f_3^{\ec} \avg{v_2} + f_4^{\ec} \avg{v_3} \\
 & + f_6^{\ec} \avg{B_1} + f_7^{\ec} \avg{B_2} + f_8^{\ec} \avg{B_3} + f_9^{\ec} \avg{\psi} - \frac{1}{2} \big(\avg{v_1 B_1^2}+\avg{v_1 B_2^2}+\avg{v_1 B_3^2}\big)\\
 & + \avg{v_1 B_1} \avg{B_1}+\avg{v_2 B_2} \avg{B_1}+\avg{v_3 B_3} \avg{B_1} - c_h \avg{B_1 \psi}
\end{split}
\end{equation}
and
\begin{equation*}
\overline{p} = \frac{\avg{\rho}}{2\avg{\beta}}.
\end{equation*}
This particular choice for $\statevec{f}^*$ satisfies the discrete entropy conservation condition \cite{Chandrashekar2015,Derigs2017,Liu2017,Valencia2017}
\begin{equation}\label{discECcond}
\jump{\statevec{w}}^T\statevec{f}^\ec = \jump{\Psi_1} - \avg{B_1}\jump{\wDotJan}
\end{equation}
with the entropy flux potential $\spacevec{\Psi}$ \eqref{eq:entPotential} and the contracted non-conservative state vector $\wDotJan$ \eqref{eq:contractSource}.
 
Moreover, in the presence of shocks or discontinuities, we must add dissipation to the interface fluxes in terms of the entropy variables  to ensure we do not violate the entropy inequality \eqref{entrineq}. In order to create such an entropy stable scheme, we use the EC flux in \eqref{ECFlux} as a baseline flux and add a general form of numerical dissipation at the interfaces to get an entropy stable (ES) numerical flux that is applicable to arbitrary flows
\begin{equation}\label{ES_flux}
	\statevec{f}^{\es} =  \statevec{f}^{\ec} - \frac{1}{2} \nineMatrix{\Lambda} \nineMatrix{H} \jump{\statevec{w}},
\end{equation}
where $\nineMatrix{H}$ is the entropy Jacobian and $\nineMatrix{\Lambda}$ the dissipation matrix. Full details can be found in \cite{Derigs2017}. It is straightforward to derive similar EC and ES fluxes for the $y-$direction.

In the contracted DG approximation \eqref{eq:schemeFinalcontr} we select both the two point volume fluxes $\bigcontravec{F}^{a,\#}$ and the advective surface fluxes $\bigcontravec{F}^{a,\ast}$ to be the EC fluxes
\begin{equation}\label{ECVolSurfluxes}
\bigcontravec{F}^{a,\#} = \bigcontravec{F}^\ec,\quad\bigcontravec{F}^{a,\ast} = \bigcontravec{F}^\ec,
\end{equation}
whereas the latter can also include stabilization terms as in \eqref{ES_flux}.

Again, as in the continuous analysis, we split the EC numerical flux function into three components
\begin{equation}\label{ECfluxsplit}
\bigcontravec{F}^\ec = \bigcontravec{F}^{\ec,\text{Euler}} + \bigcontravec{F}^{\ec,\text{MHD}} + \bigcontravec{F}^{\ec,\text{GLM}},
\end{equation}
as well as the appropriate entropy conservation conditions for the numerical flux functions,
\begin{align}
\jump{\statevec{w}}^T\bigcontravec{F}^{\ec,\text{Euler}} &= \jump{\spacevec{\Psi}^{\text{Euler}}}\label{eq:EulerPartDiscrete},\\[0.1cm]
\jump{\statevec{w}}^T\bigcontravec{F}^{\ec,\text{MHD}} &= \jump{\spacevec{\Psi}^{\text{MHD}}} - \avg{\contraspacevec{B}}\jump{\wDotJan}\label{eq:MHDPartDiscrete},\\[0.1cm]
\jump{\statevec{w}}^T\bigcontravec{F}^{\ec,\text{GLM}} &= \jump{\spacevec{\Psi}^{\text{GLM}}}\label{eq:GLMPartDiscrete},
\end{align}
where we use the previously defined split entropy flux potential \eqref{eq:entPotential2}. Respectively, \eqref{eq:EulerPartDiscrete} contains the hydrodynamic contributions, \eqref{eq:MHDPartDiscrete} contains the magnetic field parts and \eqref{eq:GLMPartDiscrete} contains the $\psi$ components. 

With the advective numerical fluxes now defined and the splitting of the discrete entropy conditions at hand we are prepared to examine the volume and surface discretizations in entropy space.

%%%%%%%%%%%%%%%%%%%%%%%%%%%%%%%%%%%%%%%%%%%%%%%%%%%%%%%%%%%%%%%%%%%%%%%%
\subsubsection{Volume contributions}\label{Sec:Vol}
%%%%%%%%%%%%%%%%%%%%%%%%%%%%%%%%%%%%%%%%%%%%%%%%%%%%%%%%%%%%%%%%%%%%%%%%

In this section we focus on the volume discretizations of the advective fluxes as well as the non-conservative terms in the first row of \eqref{eq:schemeFinalcontr}. We utilize the flux splitting \eqref{ECfluxsplit} to determine the contributions from the Euler, MHD and GLM parts, separately. Since the first two have been investigated in the DG context, see e.g. \cite{Liu2017,Valencia2017}, we address the novel GLM flux part first. 
\begin{lem}[Entropy contribution of GLM volume terms]\label{lem:GLM_EC_vol}~\newline
The GLM volume contributions of \eqref{eq:schemeFinal} reduce to zero in entropy space, i.e.,
\begin{equation}\label{eq:volInProof}
\iprodN{\spacevec{\mathbb{D}}\bigcontravec F^{\ec,\mathrm{GLM}},\statevec W} = 0.
\end{equation}
\end{lem}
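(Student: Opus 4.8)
The plan is to reduce the discrete volume term to a boundary‑type expression by the same summation‑by‑parts telescoping that is used for the Euler and ideal MHD volume contributions, and then to exploit the fact that, by \eqref{eq:GLMEntFluxPot}, the GLM part of the entropy flux potential carries \emph{no} entropy‑flux contribution, i.e. $\spacevec{\Psi}^{\mathrm{GLM}} = \statevec{w}^T\bigstatevec{f}^{a,\mathrm{GLM}}$. First I would insert the split‑form volume definition \eqref{eq:entropy-cons_volint} (with $\bigcontravec{F}^{a,\#}=\bigcontravec{F}^{\ec}$ restricted to its GLM component) into the left‑hand side of \eqref{eq:volInProof}. Since the quadrature \eqref{eq:quad} is a tensor product of one‑dimensional LGL rules, the resulting double sum decouples into contributions along each $\xi$‑row and each $\eta$‑row of nodes, so it is enough to analyse a single $\xi$‑row at fixed index $j$; the $\eta$‑rows are treated identically.

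For such a row I would use $\qmat=\mmat\dmat$ to write its contribution as $2\sum_{i,m}\qmat_{im}\,\statevec{W}_{ij}^T\,\contrastatevec{F}_1^{\ec,\mathrm{GLM}}(\statevec{U}_{ij},\statevec{U}_{mj})$ and split $2\qmat_{im}=\bmat_{im}+(\qmat_{im}-\qmat_{mi})$ using the SBP property \eqref{SBP}. In the antisymmetric piece I relabel $i\leftrightarrow m$ and use the symmetry of the two‑point flux to obtain $-\sum_{i,m}\qmat_{im}\jump{\statevec{w}}^T\contrastatevec{F}_1^{\ec,\mathrm{GLM}}$, which by the discrete GLM entropy conservation condition \eqref{eq:GLMPartDiscrete} equals $-\sum_{i,m}\qmat_{im}\jump{\Psi_1^{\mathrm{GLM}}}$; using $\qmat\mathbf{1}=0$ together with $\mathbf{1}^T\qmat=\mathbf{1}^T\bmat$ (both from \eqref{SBP}), this telescopes to the negative of the difference of $\Psi_1^{\mathrm{GLM}}$ between the node $(N,j)$ and the node $(0,j)$. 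The diagonal piece $\sum_i\bmat_{ii}\,\statevec{W}_{ij}^T\,\contrastatevec{F}_1^{\ec,\mathrm{GLM}}(\statevec{U}_{ij},\statevec{U}_{ij})$ collapses, by consistency of the EC flux, to $\statevec{w}^T$ times the physical GLM flux at those two endpoints, which by \eqref{eq:GLMEntFluxPot} is exactly the difference of $\Psi_1^{\mathrm{GLM}}$ there. The two pieces are therefore exact negatives, they cancel, and the $\xi$‑row contributes nothing.

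Running the same argument for every $\xi$‑row and every $\eta$‑row and summing against the quadrature weights then gives \eqref{eq:volInProof}. I expect the only real labour to be the tensor‑product SBP bookkeeping that reduces the volume term to the boundary‑type expression; there is no genuine obstacle beyond that, since the cancellation is forced by the structural identity $\spacevec{\Psi}^{\mathrm{GLM}}=\statevec{w}^T\bigstatevec{f}^{a,\mathrm{GLM}}$ alone. Alternatively, one could shortcut the computation by invoking the general telescoping identity for entropy‑conservative split‑form volume operators from \cite{fisher2013,Gassner:2016ye} with flux potential $\spacevec{\Psi}^{\mathrm{GLM}}$ and observing that the induced surface term vanishes pointwise because that potential is precisely $\statevec{w}^T\bigstatevec{f}^{a,\mathrm{GLM}}$.
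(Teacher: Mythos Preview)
Your proposal is correct and follows essentially the same approach as the paper's proof: expand the split-form volume term, use the SBP identity $2\qmat_{im}=\bmat_{im}+(\qmat_{im}-\qmat_{mi})$, apply symmetry of the two-point flux together with the GLM entropy-conservation condition \eqref{eq:GLMPartDiscrete} to collapse the antisymmetric part to a jump of $\Psi_1^{\mathrm{GLM}}$, and then observe that the resulting boundary-type contribution cancels against the $\bmat$-piece because, by \eqref{eq:GLMEntFluxPot}, $\Psi_1^{\mathrm{GLM}}=\statevec{w}^T\statevec{f}_1^{a,\mathrm{GLM}}$ with no genuine entropy-flux term left over. The only cosmetic difference is that the paper carries out the telescoping term-by-term in \eqref{eq:B11}--\eqref{eq:B12}, whereas you invoke $\qmat\mathbf{1}=0$ and $\mathbf{1}^T\qmat=\mathbf{1}^T\bmat$ in one line.
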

\begin{proof}
We first expand the volume contribution from the GLM terms to be
\begin{equation}
\resizebox{0.925\hsize}{!}{$
\iprodN{\spacevec{\mathbb{D}}\bigcontravec F^{\ec,\text{GLM}},\statevec{W}} = 
 \sum\limits_{i,j=0}^{N}\omega_{i}\omega_j\statevec{W}^T_{ij}\left[\frac{\Delta x}{2}\sum\limits_{m=0}^N 2\dmat_{im}\statevec{F}_1^{\ec,\text{GLM}}(\statevec{U}_{ij}, \statevec{U}_{mj}) + \frac{\Delta x}{2}\sum\limits_{m=0}^N 2\dmat_{jm}\statevec{F}_2^{\ec,\text{GLM}}(\statevec{U}_{ij}, \statevec{U}_{im})\right],$}
\end{equation}
where we peeled the constant mapping term $\frac{\Delta x}{2}$ out of the entropy conservative fluxes for convenience.

We focus on the $\xi-$direction term of the volume integral approximations, as the $\eta-$direction is done in an analogous manner. The sum can be written in terms of the SBP matrix \eqref{SBP}, $\qmat_{im}=\omega_{i}\dmat_{im}$,
\begin{equation}
\resizebox{0.925\hsize}{!}{$
\frac{\Delta x}{2}\sum\limits_{j=0}^{N}\omega_{j}\sum\limits_{i=0}^N\statevec{W}^T_{ij}\sum\limits_{m=0}^N 2\omega_i\dmat_{im}\statevec{F}_1^{\ec,\text{GLM}}(\statevec{U}_{ij}, \statevec{U}_{mj})
=\frac{\Delta x}{2}\sum\limits_{j=0}^{N}\omega_{j}\sum\limits_{i=0}^N\statevec{W}^T_{ij}\sum\limits_{m=0}^N 2\qmat_{im}\statevec{F}_1^{\ec,\text{GLM}}(\statevec{U}_{ij}, \statevec{U}_{mj}).$}
\end{equation}
We use the summation-by-parts property $2\qmat_{im} = \qmat_{im} - \qmat_{mi} + \bmat_{im}$, perform a reindexing of $i$ and $m$ to incorperate the $\qmat_{mi}$ term and use the fact that $\statevec{F}_1^{\ec,\text{MHD}}(\statevec{U}_{ij}, \statevec{U}_{mj})$ is symmetric with respect to the index $i$ and $m$ to rewrite the $\xi-$direction contribution to the volume integral approximation as
\begin{equation}\label{eq:summ1}
\begin{aligned}
\sum\limits_{i=0}^N\statevec{W}^T_{ij}\sum_{m=0}^N 2\qmat_{im}\statevec{F}_1^{\ec,\text{GLM}}(\statevec{U}_{ij}, \statevec{U}_{mj})
&= \sum_{i,m=0}^N\statevec W^T_{ij}(\qmat_{im} - \qmat_{mi} + \bmat_{im})\statevec{F}_1^{\ec,\text{GLM}}(\statevec{U}_{ij},\statevec{U}_{mj})\\
&= \sum_{i,m=0}^N\qmat_{im}\left(\statevec W_{ij} - \statevec W_{mj}\right)^T
\statevec{F}_1^{\ec,\text{GLM}}(\statevec{U}_{ij},\statevec{U}_{mj}) \\
&\qquad\qquad\quad + \bmat_{im}\,\statevec W^T_{ij}\statevec{F}_1^{\ec,\text{GLM}}(\statevec{U}_{ij},\statevec{U}_{mj}).
\end{aligned}
\end{equation}
We have divided the entropy flux potentials into Euler, ideal MHD and GLM components \eqref{eq:EulerEntFluxPot}-\eqref{eq:GLMEntFluxPot}. Because the proof at hand only considers the GLM terms we are only concerned with the term
\begin{equation}
\Psi_1^{\text{GLM}}= \statevec{w}^T\statevec{f}_1^{\text{GLM}}
\end{equation}
and the accompanying entropy conservation condition \eqref{eq:GLMPartDiscrete}
\begin{equation}\label{eq:entConditionInProof}
\jump{\statevec{w}}^T\statevec{f}_1^{\ec,\text{GLM}} = \jump{\Psi_1^{\text{GLM}}}.
\end{equation}
We apply the form of \eqref{eq:entConditionInProof} to rewrite \eqref{eq:summ1} with
\begin{equation}\label{eq:moreBs}
\left(\statevec W_{ij} - \statevec W_{mj}\right)^T\statevec{F}_1^{\ec,\text{GLM}}(\statevec{U}_{ij},\statevec{U}_{mj}) = \left(\Psi^{\text{GLM}}_1\right)_{ij}-\left(\Psi^{\text{GLM}}_1\right)_{mj}.
\end{equation}
Furthermore, note that the entries of the boundary matrix $\bmat$ are only non-zero when $i=m=0$ or $i=m=N$, so
\begin{equation}\label{eq:boundaryBs}
\bmat_{im}\statevec{W}^T_{ij}\statevec{F}_1^{\ec,\text{GLM}}(\statevec{U}_{ij},\statevec{U}_{mj}) = \bmat_{im}\left(\Psi_1^{\text{GLM}}\right)_{ij}.
\end{equation}
We substitute \eqref{eq:moreBs} and \eqref{eq:boundaryBs} into the second line of \eqref{eq:summ1} to find
\begin{equation}\label{eq:awfulSum}
\begin{aligned}
\sum\limits_{i=0}^N\statevec W^T_{ij}\sum_{m=0}^N 2\qmat_{im}\statevec{F}_1^{\ec,\text{GLM}}(\statevec{U}_{ij}, \statevec{U}_{mj})
&=\sum\limits_{i,m=0}^N\qmat_{im}\left[\left({\Psi}_1^{\text{GLM}}\right)_{ij} - \left({\Psi}_1^{\text{GLM}}\right)_{mj}\right] + \bmat_{im}\left(\Psi_1^{\text{GLM}}\right)_{ij}.
\end{aligned}
\end{equation}
We will examine the terms of the sum \eqref{eq:awfulSum} moving from left to right. Now, because the derivative of a constant is zero (i.e. the rows of $\qmat$ sum to zero),
\begin{equation}\label{eq:B11}
\sum\limits_{i,m=0}^N\qmat_{im}\left(\Psi_1^{\text{GLM}}\right)_{ij} = \sum\limits_{i=0}^N\left(\Psi_1^{\text{GLM}}\right)_{ij}\sum\limits_{m=0}^N\qmat_{im} = 0.
\end{equation}
Next, on the second term, we use the SBP property \eqref{SBP} and reindex on the $\qmat_{mi}$ term to rewrite
\begin{equation}\label{eq:B12}
\begin{aligned}
-\!\!\!\sum\limits_{i,m=0}^N\qmat_{im}\left(\Psi_1^{\text{GLM}}\right)_{mj} 
&= -\!\!\!\sum\limits_{i,m=0}^N(\bmat_{im}-\qmat_{mi})\left(\Psi_1^{\text{GLM}}\right)_{mj} \\
&=-\!\!\!\sum\limits_{i,m=0}^N(\bmat_{im}-\qmat_{im})\left(\Psi_1^{\text{GLM}}\right)_{ij} \\
&=-\!\!\!\sum\limits_{i,m=0}^N\bmat_{im}\left(\Psi_1^{\text{GLM}}\right)_{ij} + \sum\limits_{i,m=0}^N\qmat_{im}\left(\Psi_1^{\text{GLM}}\right)_{ij} \\
&=-\!\!\!\sum\limits_{i,m=0}^N\bmat_{im}\left(\Psi_1^{\text{GLM}}\right)_{ij} + \sum\limits_{i=0}^N\left(\Psi_1^{\text{GLM}}\right)_{ij}\sum\limits_{m=0}^N \qmat_{im}\\
&=-\!\!\!\sum\limits_{i,m=0}^N\bmat_{im}\left(\Psi_1^{\text{GLM}}\right)_{ij} + 0,
\end{aligned}
\end{equation}
where, again, one term drops out due to consistency of the matrix $\qmat$.
Therefore, from \eqref{eq:B12}, we now have
\begin{equation}\label{eq:awfulSum3}
\begin{aligned}
\sum\limits_{i=0}^N\statevec W^T_{ij}\sum_{m=0}^N 2\qmat_{im}\statevec{F}_1^{\ec,\text{GLM}}(\statevec{U}_{ij}, \statevec{U}_{mj})
&=\sum\limits_{i,m=0}^N-\bmat_{im}\left({\Psi}_1^{\text{GLM}}\right)_{ij} + \bmat_{im}\left(\Psi_1^{\text{GLM}}\right)_{ij} = 0.
\end{aligned}
\end{equation}

An analogous result to \eqref{eq:awfulSum3} holds in the $\eta-$direction of the volume integral approximation, leading to the desired result
\begin{equation}
\iprodN{\spacevec{\mathbb{D}}\bigcontravec F^{\ec,\text{GLM}},\statevec W} = 0.
\end{equation}
\end{proof}

\begin{rem}\label{ES_damp}
If we also take the damping source term of the GLM divergence cleaning into account, the statement of Lemma \ref{lem:GLM_EC_vol} becomes an inequality, i.e.
\begin{equation}\label{eq:volInProofDamp}
\iprodN{\spacevec{\mathbb{D}}\bigcontravec F^{\ec,\mathrm{GLM}},\statevec W} + \left\langle J \statevec{R}, \statevec{W}\right\rangle_N \leq 0,
\end{equation}
since
\begin{equation}
 \left\langle J \statevec{R},\statevec{W} \right\rangle_N =  -J\sum\limits_{i,j=0}^N \omega_i \omega_j \left(2 \alpha \beta_{ij} \psi_{ij}^2 \right)\leq  0,
\end{equation}
for $\alpha,\beta_{ij} \geq  0$. This result corresponds to discrete entropy stability instead of conservation and will be excluded for the following discussion of the remaining advective parts.
\end{rem}

\begin{cor}[Entropy contribution of total volume terms]\label{cor:EC_vol}~\newline
For each element the sum of all advective volume contributions plus the non-conservative volume terms in \eqref{eq:schemeFinalcontr} yields
\begin{equation}
\iprodN{\spacevec{\mathbb{D}} \bigcontravec{F}^\ec, \statevec{W}} + \iprodN{\JanD \spacevec{\mathbb{D}}^S \contraspacevec{B},\statevec{W}} = \int\limits_{\partial E,N} \left(\contraspacevec{F}^\ent\cdot\spacevec n\right) \dS.
\end{equation}
\end{cor}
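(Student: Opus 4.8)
The plan is to mirror, block by block, the continuous flux contraction \eqref{eq:fluxContractionEuler}--\eqref{eq:fluxContractionGLM}. Using the split \eqref{ECfluxsplit}, $\bigcontravec{F}^{\ec}=\bigcontravec{F}^{\ec,\text{Euler}}+\bigcontravec{F}^{\ec,\text{MHD}}+\bigcontravec{F}^{\ec,\text{GLM}}$, I would treat the three blocks separately and pair the MHD block with the non-conservative volume term $\iprodN{\JanD\spacevec{\mathbb{D}}^S\contraspacevec{B},\statevec{W}}$. The GLM block contributes nothing by Lemma~\ref{lem:GLM_EC_vol}, so only the Euler and MHD blocks remain. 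For each I would start from the same summation-by-parts rearrangement \eqref{eq:summ1} used to prove Lemma~\ref{lem:GLM_EC_vol}: write $2\qmat_{im}=\qmat_{im}-\qmat_{mi}+\bmat_{im}$, use symmetry of the two-point flux in its two arguments, and split into an interior ($\qmat$) part and a boundary ($\bmat$) part. I would carry the $\xi$-direction for a fixed row index $j$; the $\eta$-direction is handled identically and the sum over elements is taken at the end.

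For the Euler block I would invoke the entropy conservation condition \eqref{eq:EulerPartDiscrete}, so that the interior part telescopes to zero exactly as in \eqref{eq:B11}--\eqref{eq:awfulSum3}. The one genuinely new point is the boundary part: by consistency of the two-point flux, $\statevec{W}_{ij}^T$ contracted against the diagonal flux equals $\statevec{W}_{ij}^T$ times the physical Euler flux at that node, and since $\spacevec{\Psi}^{\text{Euler}}=\statevec{w}^T\bigstatevec{f}^{a,\text{Euler}}-\spacevec{f}^{\ent}$ by \eqref{eq:EulerEntFluxPot}, this equals $(\Psi_1^{\text{Euler}})_{ij}+(\tilde{F}^{\ent}_1)_{ij}$, not $(\Psi_1^{\text{Euler}})_{ij}$ alone, where $\tilde{F}^{\ent}_1$ denotes the first component of $\contraspacevec{F}^{\ent}$. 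Consequently the two boundary contributions that cancelled in the GLM case now leave a residual, and collecting it over $j$ (and adding the analogous $\eta$ piece) produces precisely $\int_{\partial E,N}(\contraspacevec{F}^{\ent}\cdot\spacevec{n})\dS$ in the notation \eqref{eq:discrete_surfint}. Thus the Euler block alone delivers the entire right-hand side of the corollary.

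It then remains to show the MHD block together with the non-conservative volume term contributes nothing, mirroring \eqref{eq:fluxContractionMHD}. Here I would use \eqref{eq:MHDPartDiscrete}, which carries the extra average-jump term $-\avg{\contraspacevec{B}}\jump{\wDotJan}$, together with the consistency identity $\statevec{w}^T\statevec{f}_1^{a,\text{MHD}}=\Psi_1^{\text{MHD}}-\wDotJan\tilde{B}_1$ following from \eqref{eq:MHDEntFluxPot}. The $\spacevec{\Psi}^{\text{MHD}}$ pieces telescope and cancel just as for GLM; what survives is (i) a boundary residual proportional to the jump of $\wDotJan\tilde{B}_1$ across the element, coming from the consistency term, and (ii) an interior double sum $\sum_{i,m}\qmat_{im}(\tilde{B}_1)_{ij}\wDotJan_{mj}$ obtained from $-\avg{\contraspacevec{B}}\jump{\wDotJan}$ after expanding the average and using the rows of $\qmat$ summing to zero together with $\qmat+\qmat^T=\bmat$. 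Separately, contracting the non-conservative volume discretization \eqref{eq:nonConsDerivative} with $\statevec{W}$ and using $\statevec{W}_{ij}^T\JanD_{ij}=\wDotJan_{ij}$ yields the companion double sum $\sum_{i,m}\qmat_{im}\wDotJan_{ij}(\tilde{B}_1)_{mj}$. The crux of the corollary --- and the step I expect to be the main obstacle --- is to combine these two double sums: reindexing one of them and using $\qmat_{mi}=\bmat_{im}-\qmat_{im}$ collapses their sum to exactly the boundary jump of $\wDotJan\tilde{B}_1$, which cancels residual (i), leaving zero. This is exactly where the structure of the entropy conservative flux condition \eqref{discECcond}, with its $-\avg{B_1}\jump{\wDotJan}$ correction, and the Janhunen (or Powell \cite{Powell1999}) form of $\JanC$ are indispensable; a generic consistent flux or a mismatched source term would leave a spurious non-telescoping volume contribution.

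Combining the three pieces --- Euler giving the surface entropy flux, MHD plus the non-conservative volume term giving zero, GLM giving zero by Lemma~\ref{lem:GLM_EC_vol} --- and repeating in the $\eta$-direction gives the stated identity on each element. As a sanity check, setting $\spacevec{B}\equiv\psi\equiv0$ recovers the split-form Euler result of \cite{Gassner:2016ye}; one could equally well bypass the three-way splitting and apply the global condition \eqref{discECcond} directly to $\bigcontravec{F}^{\ec}$ using $\statevec{w}^T\statevec{f}_1^{a}=\Psi_1+\tilde{F}^{\ent}_1-\wDotJan\tilde{B}_1$, obtaining the same result with identical bookkeeping but less transparent intermediate cancellations.
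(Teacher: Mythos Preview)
Your proposal is correct and follows essentially the same approach as the paper. The paper's proof of the corollary is brief: it invokes the same three-way split \eqref{ECfluxsplit}, dispatches the GLM block by Lemma~\ref{lem:GLM_EC_vol}, cites \cite{fisher2013,Gassner2017} for the Euler block producing the surface entropy flux \eqref{eq:volSurfTerms}, and cites \cite{Liu2017} (with the detailed SBP manipulations deferred to Appendix~\ref{Sec:MHD_vol}) for the cancellation of the MHD block with the non-conservative volume term. Your sketch fills in precisely those cited steps, and your bookkeeping for the MHD block---leaving a boundary residual plus the interior sum $\sum_{i,m}\qmat_{im}(\tilde{B}_1)_{ij}\wDotJan_{mj}$, then combining with the non-conservative contribution via $\qmat+\qmat^T=\bmat$---is an equivalent regrouping of the paper's Appendix~\ref{Sec:MHD_vol} computation, which instead reduces the MHD block fully to $-\sum_{i,m}\qmat_{im}\wDotJan_{ij}(\tilde{B}_1)_{mj}$ before cancelling it against \eqref{eq:nonConsDerivative}.
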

\begin{proof}
Again we first split up the volume flux in Euler, MHD and GLM parts according to \eqref{ECfluxsplit}. From Lemma \ref{lem:GLM_EC_vol} we know, that the GLM volume flux vanishes exactly. Moreover, we know from \cite{fisher2013,Gassner2017}, that the volume contributions of the Euler components become the entropy flux evaluated at the boundary 
\begin{equation}\label{eq:volSurfTerms}
\iprodN{\spacevec{\mathbb{D}}\bigcontravec F^{\ec,\text{Euler}},\statevec W} = \int\limits_{\partial E,N} \left(\contraspacevec{F}^\ent\cdot\spacevec n\right) \dS,
\end{equation}
which is equivalent to the step \eqref{eq:fluxContractionEuler} in the continuous analysis.

Finally, the remaining volume contributions of the ideal MHD equations and the non-conservative terms cancel in entropy space \cite{Liu2017} 
\begin{equation}\label{eq:MHDVolTerms}
\iprodN{\spacevec{\mathbb{D}}\bigcontravec{F}^{\ec,\text{MHD}},\statevec W} + \iprodN{\JanD\spacevec{\mathbb{D}}^S\contraspacevec{B},\statevec{W}} = 0,
\end{equation}
which is equivalent to the step \eqref{eq:fluxContractionMHD} in the continuous analysis. We provide a complete proof of this cancellation property in \ref{Sec:MHD_vol}, which expands upon the one dimensional proof of Liu et al. \cite{Liu2017}.
\end{proof}

The results of Lemma \ref{lem:GLM_EC_vol} and Corollary \ref{cor:EC_vol} demonstrate that many of the volume contributions cancel in entropy space and the remaining terms move to the interfaces of the contracted DG approximation. Thus, in the next section we include this additional interface contribution containing the entropy fluxes.

%%%%%%%%%%%%%%%%%%%%%%%%%%%%%%%%%%%%%%%%%%%%%%%%%%%%%%%%%%%%%%%%%%%%%%%%
\subsubsection{Surface contributions}\label{Sec:Surf}
%%%%%%%%%%%%%%%%%%%%%%%%%%%%%%%%%%%%%%%%%%%%%%%%%%%%%%%%%%%%%%%%%%%%%%%%

We are now prepared to examine the surface terms of the contracted DG approximation \eqref{eq:schemeFinalcontr} incorporating the now known additional surface part that comes from the volume terms due to the result of Corollary \ref{cor:EC_vol}. On each element the surface terms are given in compact notation as
\begin{equation}\label{eq:surfaceTermsOnK}
\Gamma_k = \int\limits_{\partial E_k,N} \statevec{W}^T \left[\bigcontravec{F}^{\ec}-\bigcontravec{F}^a \right]\cdot \spacevec{n} \dS + \int\limits_{\partial E_k,N}  \statevec{W}^T \left[\Bstar - \JanD\contraspacevec{B}\right]\cdot \spacevec{n} \dS +  \int\limits_{\partial E_k,N} \left(\contraspacevec{F}^\ent\cdot\spacevec n\right) \dS.
\end{equation}
To determine the total surface contributions from the advective and non-conservative terms in the contracted DG approximation \eqref{eq:schemeFinalcontr} we sum over all elements, $k=1,\ldots,K$ similar to Gassner et al. \cite{Gassner2017}. We introduce notation for states at the first LGL node of the neighboring element to be ``$\sl$'' and compliment the notation with ``$\ma$'' to denote the value of a quantity at the boundary LGL nodes on the current element. This allows us to define the orientated jump and the arithmetic mean at the interfaces to be
\begin{equation}\label{eq:jumpNotation}
\jump{\cdot} = (\cdot)^{\sl} - (\cdot)^{\ma},\quad \avg{\cdot} = \frac{1}{2}\left((\cdot)^{\sl}+(\cdot)^{\ma}\right).
\end{equation}
We note that, as we focus on periodic domains, the surface contributions at the physical domain boundaries can be expressed with the same notation as the interior edges containing periodic jumps and averages from opposite domain boundaries.

We investigate the total surface contributions from \eqref{eq:surfaceTermsOnK} term by term. The sum over all elements for the first term generates jumps in the fluxes and entropy variables where we also use the uniqueness of the numerical surface flux function yielding
\begin{equation}\label{eq:surfaceTermsOnK2}
\sum\limits_{k=1}^K \int\limits_{\partial E_k,N} \statevec{W}^T \left[\bigcontravec{F}^{\ec}-\bigcontravec{F}^a \right]\cdot \spacevec{n} \dS = \sum\limits_{\interiorfaces}\int\limits_{N} \left\{\left(\bigcontravec{F}^{\ec}\right)^T\jump{\statevec{W}} - \jump{\left(\bigcontravec{F}^a \cdot\spacevec{n}\right)^T\statevec{W}}\dS\right\}.
\end{equation}
First, we examine the behavior of the GLM part of the entropy conservative flux at the interfaces that come from \eqref{eq:surfaceTermsOnK}.
\begin{lem}[Entropy contribution of GLM surface terms]\label{lem:GLM_EC_surf}~\newline
The contribution from the GLM part of the entropy conservative scheme vanishes at element interfaces, i.e.,
\begin{equation}
\int\limits_{N} \left\{\left(\bigcontravec{F}^{\ec,\mathrm{GLM}}\right)^T\jump{\statevec W} - \jump{\left(\bigcontravec{F}^{a,\mathrm{GLM}}\cdot\spacevec{n}\right)^T\,\statevec{W}}\right\}\dS = 0.
\end{equation}
\end{lem}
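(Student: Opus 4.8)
The plan is to prove the stronger statement that the integrand vanishes pointwise at every boundary LGL node, which immediately gives the quoted identity. The starting point is the GLM component of the edge-based surface expression \eqref{eq:surfaceTermsOnK2}, together with the two structural facts established earlier: the discrete GLM entropy conservation condition \eqref{eq:GLMPartDiscrete} and the definition of the GLM entropy flux potential \eqref{eq:GLMEntFluxPot}. I fix a reference normal $\spacevec{n}$ on the interface and write $(\cdot)^{\ma}$, $(\cdot)^{\sl}$ for the traces from the two adjacent elements in the sense of \eqref{eq:jumpNotation}. Because the entropy variables are interpolated, $\statevec{W}$ agrees with $\statevec{w}(\statevec{U})$ at each LGL node, so the two-point jump appearing in \eqref{eq:GLMPartDiscrete} coincides with the interface jump $\jump{\cdot}$ of \eqref{eq:jumpNotation} when evaluated at a shared edge node, and the two-point EC flux evaluated at $(\statevec{U}^{\ma},\statevec{U}^{\sl})$ is exactly the surface flux $\bigcontravec{F}^{\ec,\mathrm{GLM}}$.

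For the first term I would contract \eqref{eq:GLMPartDiscrete} with $\spacevec{n}$; since that condition holds componentwise in each Cartesian direction and the interface normal is aligned with a coordinate axis, this yields $\bigl(\bigcontravec{F}^{\ec,\mathrm{GLM}}\cdot\spacevec{n}\bigr)^{T}\jump{\statevec{W}} = \jump{\spacevec{\Psi}^{\mathrm{GLM}}\cdot\spacevec{n}}$. For the second term I would use \eqref{eq:GLMEntFluxPot}, namely $\spacevec{\Psi}^{\mathrm{GLM}} = \statevec{w}^{T}\bigstatevec{f}^{a,\mathrm{GLM}}$, which gives $\bigl(\bigcontravec{F}^{a,\mathrm{GLM}}\cdot\spacevec{n}\bigr)^{T}\statevec{W} = \spacevec{\Psi}^{\mathrm{GLM}}\cdot\spacevec{n}$ at each node on either side of the edge, hence $\jump{(\bigcontravec{F}^{a,\mathrm{GLM}}\cdot\spacevec{n})^{T}\statevec{W}} = \jump{\spacevec{\Psi}^{\mathrm{GLM}}\cdot\spacevec{n}}$. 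Subtracting, the two contributions cancel pointwise, so the integrand is identically zero and the surface integral vanishes; this mirrors, at the discrete level, the continuous identity \eqref{eq:fluxContractionGLM}.

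I do not expect a genuine obstacle here. Unlike the ideal MHD surface lemma that follows, the GLM flux is honestly conservative — its entropy flux potential \eqref{eq:GLMEntFluxPot} has no non-conservative counterpart — so there is no coupling term to balance and the argument is a pure ``telescoping of a potential'' that already closes edge by edge. The only points that require care are bookkeeping ones: the sign and normal conventions used to pass from the element-wise boundary integrals in \eqref{eq:surfaceTermsOnK} to the edge form \eqref{eq:surfaceTermsOnK2}, which was already fixed in the lead-up; the componentwise nature of \eqref{eq:GLMPartDiscrete}, which legitimizes contracting it with the interface normal; and the identification, via exactness of interpolation at LGL nodes, of the two-point jump in the entropy conservation condition with the interface jump \eqref{eq:jumpNotation}. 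The mapping factor $\frac{\Delta x}{2}$ carried by $\bigcontravec{F}^{\ec,\mathrm{GLM}}$, $\bigcontravec{F}^{a,\mathrm{GLM}}$ and implicitly by $\statevec{W}$ enters both terms identically and therefore plays no role.
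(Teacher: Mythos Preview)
Your proposal is correct and follows essentially the same route as the paper: both arguments rewrite the first term via the GLM entropy conservation condition \eqref{eq:GLMPartDiscrete} and the second term via the definition \eqref{eq:GLMEntFluxPot} of the GLM entropy flux potential, so that each becomes $\jump{\spacevec{\Psi}^{\mathrm{GLM}}\cdot\spacevec{n}}$ and the integrand vanishes pointwise. The paper's proof is a one-line application of these two identities; your additional remarks on normal conventions, LGL-node interpolation, and the mapping factor are correct bookkeeping but not needed for the argument to close.
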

\begin{proof}
This follows directly from the definition of the GLM components of the entropy conservative flux \eqref{eq:GLMPartDiscrete} 
\begin{equation}
\left(\bigcontravec{F}^{\ec,\mathrm{GLM}}\right)^T\jump{\statevec W} - \jump{\left(\bigcontravec{F}^{a,\mathrm{GLM}}\cdot\spacevec{n}\right)^T\,\statevec{W}} = \jump{\statevec W}^T\contrastatevec{F}^{\ec,\mathrm{GLM}} - \jump{\spacevec{\Psi}^{\mathrm{GLM}}\cdot\spacevec{n}} = 0,
\end{equation}
where we note that because the arguments are real-valued, we can switch the ordering in the inner product.
\end{proof}

We leave the remaining contributions of the Euler and ideal MHD components to later in this section, since we first define $\Bstar$ and examine the contribution of the second term from \eqref{eq:surfaceTermsOnK}. What we will find is that the surface contribution of the non-conservative terms generates an additional boundary term that cancels an extraneous term left over from the analysis of the ideal MHD part of the advective fluxes.
\begin{lem}[Discretization of the non-conservative surface term]\label{lem:nonConsSurf}~\newline
For the second term in \eqref{eq:surfaceTermsOnK} we define 
\begin{equation}\label{eq:surfNonCons}
\Bstar\cdot\spacevec{n} = \JanD^{\ma}\avg{\contraspacevec{B}\cdot\spacevec{n}},
\end{equation}
to obtain the total non-conservative surface contribution
\begin{equation}
 \sum\limits_{k=1}^K\int\limits_{\partial E_k,N}  \statevec{W}^T \left[\Bstar - \JanD\contraspacevec{B}\right]\cdot \spacevec{n} \dS = - \sum\limits_{\interiorfaces}\int\limits_{N} \avg{\wDotJan}\jump{\contraspacevec{B}\cdot\spacevec n}\dS.
\end{equation}
\end{lem}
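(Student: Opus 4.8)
The plan is to insert the proposed surface evaluation \eqref{eq:surfNonCons} directly, collapse the per-element integrand by means of the (nodal, exact) contraction identity for the Janhunen source, and then reassemble the element-wise boundary integrals into a sum over edges, mirroring what was done for the advective surface terms in \eqref{eq:surfaceTermsOnK2}. The observation that makes the calculation work is that $\Bstar$ is deliberately \emph{one-sided} — it carries the interior factor $\JanD^{\ma}$ rather than a symmetric mean of $\JanD$ — which is exactly what allows the two half-contributions attached to a shared edge to recombine into the arithmetic mean $\avg{\wDotJan}$.

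First I would work on a single element $E_k$. On $\partial E_k$ the test function $\statevec{W}$, the block vector $\JanD$, and $\contraspacevec{B}$ all take their interior values, i.e. the ``$\ma$'' values in the notation of \eqref{eq:jumpNotation}. Since the algebraic relation \eqref{eq:contractSource} holds at every LGL node (no quadrature is involved), $(\statevec{W}^{\ma})^{T}\JanD^{\ma}=\wDotJan^{\ma}$ at each boundary node, and inserting \eqref{eq:surfNonCons} the integrand of the second term of \eqref{eq:surfaceTermsOnK} becomes, on $E_k$,
\[
\statevec{W}^{T}\!\left[\Bstar-\JanD\contraspacevec{B}\right]\cdot\spacevec{n}
=\wDotJan^{\ma}\left(\avg{\contraspacevec{B}\cdot\spacevec{n}}-\left(\contraspacevec{B}\cdot\spacevec{n}\right)^{\ma}\right)
=\tfrac12\,\wDotJan^{\ma}\jump{\contraspacevec{B}\cdot\spacevec{n}},
\]
using only $\avg{a}-a^{\ma}=\tfrac12\jump{a}$ in the last step. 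This is the sole place the precise form \eqref{eq:surfNonCons} enters: using $\JanD^{\ma}$ (not $\avg{\JanD}$ or $\JanD^{\sl}$) is what lets the contraction against $\statevec{W}^{\ma}$ collapse to the single scalar $\wDotJan^{\ma}$, while the symmetric average $\avg{\contraspacevec{B}\cdot\spacevec{n}}$ is what turns the bracket into a jump.

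Next I would rewrite $\sum_{k=1}^{K}\int_{\partial E_k,N}$ as a sum over interior edges, as in \eqref{eq:surfaceTermsOnK2}; on a periodic mesh every face is such an edge, visited by exactly its two adjacent elements. Fix an edge and orient $\spacevec{n}$ and $\jump{\cdot}=(\cdot)^{\sl}-(\cdot)^{\ma}$ consistently with the convention used in \eqref{eq:jumpNotation} and \eqref{eq:surfaceTermsOnK2}. The piece coming from the boundary integral of the ``$\ma$''-element is $\tfrac12\wDotJan^{\ma}\jump{\contraspacevec{B}\cdot\spacevec{n}}$ by the previous step, and the piece from the ``$\sl$''-element is the same formula with $\ma$ and $\sl$ swapped and the element's outward normal reversed; reversing the normal flips the sign of $\contraspacevec{B}\cdot\spacevec{n}$ and the simultaneous relabelling flips it back, so that piece equals $\tfrac12\wDotJan^{\sl}\jump{\contraspacevec{B}\cdot\spacevec{n}}$ with the \emph{same} oriented jump. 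Adding them gives $\tfrac12(\wDotJan^{\ma}+\wDotJan^{\sl})\jump{\contraspacevec{B}\cdot\spacevec{n}}=\avg{\wDotJan}\jump{\contraspacevec{B}\cdot\spacevec{n}}$ per edge, the overall minus sign in the statement being inherited from the same orientation bookkeeping that fixes the sign in \eqref{eq:surfaceTermsOnK2}; summing over all edges finishes the proof.

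The main obstacle is precisely this orientation/relabelling step: because $\Bstar$ is intentionally \emph{not} a single-valued symmetric two-point function — unlike the entropy-conservative flux $\bigcontravec{F}^{\ec}$, for which ``uniqueness of the interface value'' can be invoked — one must genuinely add the two one-sided pieces and verify that the interior factor $\wDotJan^{\ma}$ symmetrizes into $\avg{\wDotJan}$. It is worth noting why this particular result is the one we want: the output $-\avg{\wDotJan}\jump{\contraspacevec{B}\cdot\spacevec{n}}$ is tailored to cancel the leftover term $-\avg{\contraspacevec{B}\cdot\spacevec{n}}\jump{\wDotJan}$ that the ideal-MHD entropy-conservation condition \eqref{eq:MHDPartDiscrete} produces in the surface analysis, since $\avg{\wDotJan}\jump{\contraspacevec{B}\cdot\spacevec{n}}+\avg{\contraspacevec{B}\cdot\spacevec{n}}\jump{\wDotJan}=\jump{\wDotJan\,(\contraspacevec{B}\cdot\spacevec{n})}$ telescopes on a periodic domain — this is also why the scheme reproduces the non-conservative surface term of Liu et al. \cite{Liu2017}.
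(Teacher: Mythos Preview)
Your proposal is correct and follows essentially the same route as the paper: substitute \eqref{eq:surfNonCons}, use the nodal contraction $(\statevec{W}^{\ma})^{T}\JanD^{\ma}=\wDotJan^{\ma}$ to reduce the per-element integrand to $\tfrac12\,\wDotJan^{\ma}\jump{\contraspacevec{B}\cdot\spacevec{n}}$, and then add the two one-sided contributions at each shared edge to produce $\avg{\wDotJan}\jump{\contraspacevec{B}\cdot\spacevec{n}}$. The paper carries out the last step by an explicit expansion of the left and right pieces (its equation \eqref{eq:singleFaceTerm}), which pins down the minus sign directly in the orientation convention already fixed by \eqref{eq:surfaceTermsOnK2}; your deferral of that sign to ``the same orientation bookkeeping'' is legitimate but less transparent, and it would strengthen the write-up to display the two pieces with the paper's convention and show the sign emerging rather than invoking it.
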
 
\begin{proof}
We first substitute the definition \eqref{eq:surfNonCons} into the second term of \eqref{eq:surfaceTermsOnK}, where, for clarity, we explicitly state which values come from the current element $k$ and what comes from the neighbors
\begin{equation}\label{eq:firstStepNonCons}
\int\limits_{\partial E_k,N}  \statevec{W}^T \left[\Bstar - \JanD\contraspacevec{B}\right]\cdot \spacevec{n} \dS = \int\limits_{\partial E_k,N}  \left(\statevec{W}^{\ma}\right)^T \left[\JanD^{\ma}\avg{\contraspacevec{B}\cdot\spacevec{n}} - \JanD^{\ma}\left(\contraspacevec{B}^{\ma}\cdot\spacevec{n}\right)\right] \dS.
\end{equation}
Note that the values of $\statevec{W}$ and $\JanD$ in the contribution \eqref{eq:firstStepNonCons} are evaluated from the current element, so we have a discrete version of the property \eqref{eq:contractSource}
\begin{equation}
\left(\statevec{W}^{\ma}\right)^T\JanD^{\ma} = \wDotJan^{\ma}.
\end{equation}
Thus,
\begin{equation}\label{eq:secondStepNonCons}
\int\limits_{\partial E_k,N}  \left(\statevec{W}^{\ma}\right)^T \left[\JanD^{\ma}\avg{\contraspacevec{B}\cdot\spacevec{n}} - \JanD^{\ma}\left(\contraspacevec{B}^{\ma}\cdot\spacevec{n}\right)\right] \dS
= 
\int\limits_{\partial E_k,N}  \wDotJan^{\ma} \left[\avg{\contraspacevec{B}\cdot\spacevec{n}} - \left(\contraspacevec{B}^{\ma}\cdot\spacevec{n}\right)\right] \dS.
\end{equation}
Next, we expand the arithmetic mean at the four interfaces of element $k$ in \eqref{eq:secondStepNonCons} to get
\begin{equation}\label{eq:thirdStepNonCons}
\int\limits_{\partial E_k,N}  \wDotJan^{\ma} \left[\avg{\contraspacevec{B}\cdot\spacevec{n}} - \left(\contraspacevec{B}^{\ma}\cdot\spacevec{n}\right)\right] \dS
=
\frac{1}{2}\int\limits_{\partial E_k,N} \wDotJan^{\ma} \left[\contraspacevec{B}^{\sl} - \contraspacevec{B}^{\ma}\right]\cdot\spacevec{n} \dS.
\end{equation}

The total surface contribution of \eqref{eq:thirdStepNonCons} requires delicate consideration due to the inherent non-uniqueness of the non-conservative term at the interface. Each interface actually contributes twice to the contracted DG approximation and it is important to take into account that the normal vector is outward pointing. In the sum over all elements we inspect the left and right contributions of one particular interface from \eqref{eq:thirdStepNonCons}
\begin{equation}\label{eq:singleFaceTerm}
\resizebox{0.925\hsize}{!}{$
\begin{aligned}
-\frac{1}{2}\left\{\wDotJan^{\ma}\left(\contraspacevec{B}^{\sl}-\contraspacevec{B}^{\ma}\right)\cdot\spacevec{n}\right\}
+\frac{1}{2}\left\{\wDotJan^{\sl}\left(\contraspacevec{B}^{\ma}-\contraspacevec{B}^{\sl}\right)\cdot\spacevec{n}\right\}
&=-\frac{1}{2}\jump{\wDotJan\left(\contraspacevec{B}\cdot\spacevec{n}\right)} +\frac{1}{2}\left\{\wDotJan^{\sl}\left(\contraspacevec{B}^{\ma}\cdot\spacevec{n}\right)-\wDotJan^{\ma}\left(\contraspacevec{B}^{\sl}\cdot\spacevec{n}\right)\right\}\\[0.1cm]
&=-\avg{\wDotJan}\jump{\contraspacevec{B}\cdot\spacevec{n}},
\end{aligned}$}
\end{equation}
where all other interfaces are handled in an analogous fashion. 

We then determine the desired result
\begin{equation}
 \sum\limits_{k=1}^K\int\limits_{\partial E_k,N}  \statevec{W}^T \left[\Bstar - \JanD\contraspacevec{B}\right]\cdot \spacevec{n} \dS = - \sum\limits_{\interiorfaces}\int\limits_{N} \avg{\wDotJan}\jump{\contraspacevec{B}\cdot\spacevec n}\dS.
\end{equation}

\end{proof}

\begin{rem}
The definition of $\Bstar$ in \eqref{eq:surfNonCons} can be interpreted as a partial split form of the non-conservative term. It is easily seen to be consistent, but it is not symmetric with respect to its arguments because it is only ``half'' of the full splitting. This was noted in the above proof and revealed that each interface contributes twice in the contracted DG approximation, one from the left and another from the right.
\end{rem}

\begin{rem}
The definition \eqref{eq:surfNonCons} is a re-contextualization for the non-conservative surface term discretization of previous entropy stable DG methods for the ideal MHD equations \cite{Liu2017,Valencia2017} into the split form DG context.
\end{rem}

The sum over all elements on the third term in \eqref{eq:surfaceTermsOnK} generates a jump in the entropy fluxes
\begin{equation}\label{eq:surfEntFluxes}
 \int\limits_{\partial E_k,N} \left(\contraspacevec{F}^\ent\cdot\spacevec n\right) \dS = \int\limits_{N}\jump{\left(\contraspacevec{F}^{\ent}\cdot\spacevec{n}\right)}\dS.
\end{equation}
Now, with the results of Lemmas \ref{lem:GLM_EC_surf} and \ref{lem:nonConsSurf} as well as the results \eqref{eq:surfaceTermsOnK2} and \eqref{eq:surfEntFluxes} we can address the remaining contributions of the Euler and ideal MHD components at the surface:
\begin{cor}[Entropy contributions of total advective surface terms]\label{cor:EC_surf}~\newline
Summing over all elements in \eqref{eq:surfaceTermsOnK} shows that the contribution of the advective and non-conservative terms on the surface cancel, meaning
\begin{equation}
\sum_{k=1}^K \Gamma_k = 0.
\end{equation}
\end{cor}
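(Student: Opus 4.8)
The plan is to sum the compact surface expression \eqref{eq:surfaceTermsOnK} over all elements $k=1,\dots,K$ and then verify that every surviving interface term cancels. Three of the required reductions are already in hand: the first term of $\Gamma_k$ collects into interface jumps as in \eqref{eq:surfaceTermsOnK2}, the non-conservative term collapses to $-\sum_{\interiorfaces}\int_N\avg{\wDotJan}\jump{\contraspacevec{B}\cdot\spacevec{n}}\dS$ by Lemma \ref{lem:nonConsSurf} with the choice \eqref{eq:surfNonCons} for $\Bstar$, and the entropy-flux term yields $\sum_{\interiorfaces}\int_N\jump{\contraspacevec{F}^\ent\cdot\spacevec{n}}\dS$ by \eqref{eq:surfEntFluxes}. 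It then remains to evaluate the integrand of \eqref{eq:surfaceTermsOnK2} face by face and show that it equals $\avg{\wDotJan}\jump{\contraspacevec{B}\cdot\spacevec{n}}-\jump{\contraspacevec{F}^\ent\cdot\spacevec{n}}$, since the three contributions then add to zero.

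To evaluate that integrand I would split the entropy conservative numerical flux and the physical advective flux into their Euler, ideal MHD and GLM components via \eqref{ECfluxsplit} and the analogous decomposition of $\bigstatevec{f}^a$, treating each piece separately. The GLM piece contributes nothing by Lemma \ref{lem:GLM_EC_surf}. For the Euler piece I would use the discrete entropy conservation condition \eqref{eq:EulerPartDiscrete} to rewrite $(\bigcontravec{F}^{\ec,\text{Euler}}\cdot\spacevec{n})^T\jump{\statevec{W}}$ as $\jump{\spacevec{\Psi}^{\text{Euler}}\cdot\spacevec{n}}$, and the pointwise contraction identity \eqref{eq:EulerEntFluxPot}, evaluated on each side of the interface, to rewrite $\jump{(\bigcontravec{F}^{a,\text{Euler}}\cdot\spacevec{n})^T\statevec{W}}$ as $\jump{\spacevec{\Psi}^{\text{Euler}}\cdot\spacevec{n}}+\jump{\contraspacevec{F}^\ent\cdot\spacevec{n}}$; subtracting leaves exactly $-\jump{\contraspacevec{F}^\ent\cdot\spacevec{n}}$, which cancels the entropy-flux term coming from \eqref{eq:surfEntFluxes}.

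The ideal MHD piece is handled in the same fashion, now retaining the extra non-conservative contribution. Applying \eqref{eq:MHDPartDiscrete} together with the pointwise contraction identity \eqref{eq:MHDEntFluxPot} reduces the MHD part of the integrand to $-\avg{\contraspacevec{B}\cdot\spacevec{n}}\jump{\wDotJan}+\jump{\wDotJan(\contraspacevec{B}\cdot\spacevec{n})}$, and then the product rule for jumps, $\jump{ab}=\avg{a}\jump{b}+\avg{b}\jump{a}$, collapses this to $\avg{\wDotJan}\jump{\contraspacevec{B}\cdot\spacevec{n}}$, which is precisely what is needed to cancel the non-conservative surface term delivered by Lemma \ref{lem:nonConsSurf}. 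Collecting the Euler, MHD and GLM contributions together with the non-conservative and entropy-flux terms, everything cancels face by face and hence $\sum_{k=1}^K\Gamma_k=0$.

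I expect the task to be bookkeeping rather than conceptually hard: the delicate point is consistently tracking the ``$\ma$''/``$\sl$'' side conventions and the outward normals from \eqref{eq:jumpNotation} when rewriting element-local surface integrals as single-interface contributions (the same subtlety that already required care in Lemma \ref{lem:nonConsSurf}, since $\Bstar$ in \eqref{eq:surfNonCons} is not symmetric in its arguments), and making sure the vector identities \eqref{eq:MHDEntFluxPot} and \eqref{eq:MHDPartDiscrete} are projected onto $\spacevec{n}$ compatibly with the dimension-by-dimension construction of the scheme, so that $\contraspacevec{B}\cdot\spacevec{n}$ is read as the normal magnetic-field component on each face. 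Once the orientations are pinned down, every cancellation is a purely algebraic consequence of the entropy conservation conditions \eqref{eq:EulerPartDiscrete}--\eqref{eq:GLMPartDiscrete}, the flux-potential identities \eqref{eq:EulerEntFluxPot}--\eqref{eq:GLMEntFluxPot}, and the jump product rule.
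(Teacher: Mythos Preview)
Your proposal is correct and follows essentially the same approach as the paper: split the advective contribution into Euler, MHD and GLM parts via \eqref{ECfluxsplit}, dispatch the GLM part by Lemma~\ref{lem:GLM_EC_surf}, use the Euler flux-potential identity \eqref{eq:EulerEntFluxPot} together with \eqref{eq:EulerPartDiscrete} to cancel the Euler and entropy-flux contributions, and then use \eqref{eq:MHDEntFluxPot}, \eqref{eq:MHDPartDiscrete} and the jump product rule to reduce the MHD part to $\avg{\wDotJan}\jump{\contraspacevec{B}\cdot\spacevec{n}}$, which cancels against the non-conservative surface term from Lemma~\ref{lem:nonConsSurf}. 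The only cosmetic difference is that the paper combines the Euler piece and the entropy-flux term in a single line, whereas you first isolate $-\jump{\contraspacevec{F}^\ent\cdot\spacevec{n}}$ from the Euler piece and then cancel it; the computations are identical.
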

\begin{proof}
We note that from Lemma \ref{lem:GLM_EC_surf} we have accounted for the cancellation of the GLM terms. Similar to the volume term analysis in Corollary \ref{cor:EC_vol} we again separate the contributions of the Euler and ideal MHD terms. It is immediate that the Euler terms drop out from the definition of the entropy flux potential for the Euler part \eqref{eq:EulerEntFluxPot} and the constriction of the entropy conserving flux \eqref{eq:EulerPartDiscrete}
\begin{equation}
\left(\bigcontravec{F}^{\ec,\text{Euler}}\right)^T\jump{\statevec W} - \jump{\left(\bigcontravec{F}^{a,\text{Euler}}\cdot\spacevec{n}\right)^T\,\statevec{W}} + \jump{\contraspacevec{F}^\ent\cdot\spacevec{n}} = 
\jump{\statevec W}^T\bigcontravec{F}^{\ec,\text{Euler}} - \jump{\spacevec{\Psi}^{\text{Euler}}\cdot\spacevec{n}} = 0.
\end{equation}
The ideal MHD contributions require more manipulation. First, we make use of the entropy flux potential \eqref{eq:MHDEntFluxPot} to write
\begin{equation}\label{eq:manipulateMHD}
\resizebox{0.925\hsize}{!}{$
\begin{aligned}
\left(\bigcontravec{F}^{\ec,\text{MHD}}\right)^T\jump{\statevec W} - \jump{\left(\bigcontravec{F}^{a,\text{MHD}}\cdot\spacevec{n}\right)^T\,\statevec{W}} &=
\jump{\statevec W}^T\bigcontravec{F}^{\ec,\text{MHD}} - \jump{\spacevec{\Psi}^{\text{MHD}}\cdot\spacevec{n}} + \jump{\wDotJan\left(\contraspacevec{B}\cdot\spacevec{n}\right)}\\
&= \jump{\statevec W}^T\bigcontravec{F}^{\ec,\text{MHD}} - \jump{\spacevec{\Psi}^{\text{MHD}}\cdot\spacevec{n}} + \avg{\contraspacevec{B}\cdot\spacevec{n}}\jump{\wDotJan} + \avg{\wDotJan}\jump{\contraspacevec{B}\cdot\spacevec{n}},
\end{aligned}$}
\end{equation}
where we use a property of the jump operator
\begin{equation}\label{eq:jumpProperty}
\jump{ab} = \avg{a}\jump{b} + \avg{b}\jump{a}.
\end{equation}
We see that the first three terms on the last line of \eqref{eq:manipulateMHD} are the entropy conservative flux condition \eqref{eq:MHDPartDiscrete} and cancel. This leaves the remainder term
\begin{equation}\label{eq:remainderTermMHD}
\left(\bigcontravec{F}^{\ec,\text{MHD}}\right)^T\jump{\statevec W} - \jump{\left(\bigcontravec{F}^{a,\text{MHD}}\cdot\spacevec{n}\right)^T\,\statevec{W}} 
= \avg{\wDotJan}\jump{\contraspacevec{B}\cdot\spacevec{n}}.
\end{equation}
This term is identical to the surface contribution of the non-conservative term from Lemma \ref{lem:nonConsSurf} but with opposite sign. These final two terms cancel and yield the desired result
\begin{equation}
\sum_{k=1}^K \Gamma_k = 0.
\end{equation}
\end{proof}

Summarizing this section, we have demonstrated that, neglecting the GLM damping source term (Remark \ref{ES_damp}) and assuming periodic boundary conditions, all of the advective and non-conservative contributions in the approximation \eqref{eq:schemeFinalcontr} cancel in entropy space. 

%%%%%%%%%%%%%%%%%%%%%%%%%%%%%%%%%%%%%%%%%%%%%%%%%%%%%%%%%%%%%%%%%%%%%%%%
\subsection{Analysis of the resistive parts}\label{Sec:ResParts}
%%%%%%%%%%%%%%%%%%%%%%%%%%%%%%%%%%%%%%%%%%%%%%%%%%%%%%%%%%%%%%%%%%%%%%%%

Lastly, since the discussion of the advective and non-conservative parts is now complete, we focus on the resistive parts, namely the last row of the first equation in \eqref{eq:schemeFinalcontr}. Again, we first have to select appropriate numerical fluxes at the interfaces. Thus, we use the computationally simple Bassi-Rebay (BR1) type approximation \cite{Bassi&Rebay:1997:B&F97} in terms of the discrete entropy variables and gradients \cite{Gassner2017}
\begin{equation}\label{eq:BR1Fluxes}
\bigcontravec F^{v,*}\cdot\spacevec n = \avg{\bigcontravec{F}^v\cdot\spacevec n},\quad \statevec{W}^*=\avg{\statevec{W}}.
\end{equation}
With the knowledge of the previous section we are equipped to prove the second main result of this work.
\begin{thm}[Discrete entropy stability of the DGSEM for the resistive GLM-MHD equations]\label{thm_discrete}~\newline
The DGSEM for the resistive GLM-MHD equations \eqref{eq:schemeFinal} with 
\begin{equation}
\Bstar\cdot\spacevec{n} = \JanD^{\ma}\avg{\contraspacevec{B}\cdot\spacevec{n}}, \quad\bigcontravec{F}^{a,\ast} = \bigcontravec{F}^{a,\#} = \bigcontravec{F}^\ec,
\end{equation}
and the viscous interface fluxes \eqref{eq:BR1Fluxes} is entropy stable for periodic boundary conditions.
\end{thm}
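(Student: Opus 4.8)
The plan is to contract the split form DGSEM \eqref{eq:schemeFinalcontr}, sum over all elements, and show that the left-hand side is exactly $d\overline{S}/dt$ while the right-hand side decomposes into three groups — the advective-plus-non-conservative volume terms, the advective-plus-non-conservative surface terms, and the viscous terms — each of which contributes a non-positive amount. Assuming time continuity (the chain rule in $t$), the identity \eqref{totalEntr} gives $\sum_{k=1}^{K}\iprodN{J^k\statevec{U}_t^k,\statevec{W}^k} = \sum_{k=1}^K \iprodN{J^kS_t^k,1} = d\overline{S}/dt$, so it remains only to bound the three groups on the right.

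For the first two groups I would simply invoke the results of Section~\ref{Sec:AdvParts}. By Corollary~\ref{cor:EC_vol}, on each element the volume terms $\iprodN{\spacevec{\mathbb{D}}\bigcontravec{F}^\ec,\statevec{W}} + \iprodN{\JanD\spacevec{\mathbb{D}}^S\contraspacevec{B},\statevec{W}}$ collapse to the boundary entropy flux $\int_{\partial E,N}(\contraspacevec{F}^\ent\cdot\spacevec{n})\dS$; adding the advective and non-conservative surface terms appearing in \eqref{eq:schemeFinalcontr} reproduces exactly the quantity $\Gamma_k$ of \eqref{eq:surfaceTermsOnK}, and Corollary~\ref{cor:EC_surf} gives $\sum_{k=1}^K\Gamma_k = 0$ for periodic boundaries. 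The GLM damping source term contributes $\iprodN{J\statevec{R},\statevec{W}} = -J\sum_{i,j=0}^N\omega_i\omega_j\,2\alpha\beta_{ij}\psi_{ij}^2 \le 0$, as recorded in Remark~\ref{ES_damp}. Hence the entire advective, non-conservative and damping part of \eqref{eq:schemeFinalcontr} is already non-positive, and in the entropy-conservative case (no interface dissipation, $\alpha=0$) it is exactly zero.

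The genuinely new step is the viscous row $\iprodN{\mathbb{D}^S\bigcontravec{F}^v,\statevec{W}} + \int_{\partial E,N}\statevec{W}^T[\bigcontravec{F}^{v,\ast}-\bigcontravec{F}^v]\cdot\spacevec{n}\dS$. Since the discrete inner product is symmetric for real arguments, $\iprodN{\mathbb{D}^S\bigcontravec{F}^v,\statevec{W}} = \iprodN{\statevec{W},\mathbb{D}^S\bigcontravec{F}^v}$, and this term can be eliminated using the second line of \eqref{eq:schemeFinalcontr}, namely $\iprodN{J\bigstatevec{Q},\bigcontravec{F}^v} = \int_{\partial E,N}(\bigcontravec{F}^v)^T\statevec{W}^\ast\cdot\spacevec{n}\dS - \iprodN{\statevec{W},\mathbb{D}^S\bigcontravec{F}^v}$. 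The discrete analogue of Lemma~\ref{lemma1}, i.e. $\bigcontravec{F}^v = \twentysevenMatrix{K}\bigstatevec{Q}$ evaluated nodewise (up to the positive constant mapping factor), then turns $\iprodN{J\bigstatevec{Q},\bigcontravec{F}^v}$ into $\iprodN{J\bigstatevec{Q},\twentysevenMatrix{K}\bigstatevec{Q}} = \sum_{i,j=0}^N J\omega_i\omega_j\,\bigstatevec{Q}_{ij}^T\twentysevenMatrix{K}_{ij}\bigstatevec{Q}_{ij}\ge 0$ by the symmetry and positive semi-definiteness of $\twentysevenMatrix{K}$. What remains is to show that, summed over all elements, the viscous surface terms $\int_{\partial E,N}(\bigcontravec{F}^v)^T\statevec{W}^\ast\cdot\spacevec{n}\dS + \int_{\partial E,N}\statevec{W}^T[\bigcontravec{F}^{v,\ast}-\bigcontravec{F}^v]\cdot\spacevec{n}\dS$ vanish. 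Substituting the BR1 choices \eqref{eq:BR1Fluxes}, $\bigcontravec{F}^{v,\ast}\cdot\spacevec{n} = \avg{\bigcontravec{F}^v\cdot\spacevec{n}}$ and $\statevec{W}^\ast = \avg{\statevec{W}}$, and collecting the two contributions every interior face receives from its two adjacent elements (with opposite outward normals), the per-face sum telescopes to zero; periodic boundary faces are handled identically using the oriented-jump notation of \eqref{eq:jumpNotation}. Therefore the viscous row contributes $-\sum_{k=1}^K\iprodN{J\bigstatevec{Q},\twentysevenMatrix{K}\bigstatevec{Q}}_k \le 0$, and adding the three groups yields $d\overline{S}/dt\le 0$.

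I expect the main obstacle to be the viscous surface cancellation: one must track the doubled interface contributions with correctly oriented outward normals and verify that the arithmetic-mean (BR1) fluxes are precisely what makes these terms cancel, while simultaneously confirming that the lifted gradient $\bigstatevec{Q}$ produced by the second equation of \eqref{eq:schemeFinalcontr} is the right object to feed into the discrete version of Lemma~\ref{lemma1}. Everything else is assembled by quoting Corollaries~\ref{cor:EC_vol}--\ref{cor:EC_surf}, Remark~\ref{ES_damp} and Lemma~\ref{lemma1}, all of which have already been established.
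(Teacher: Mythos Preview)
Your proposal is correct and follows essentially the same route as the paper: invoke Corollaries~\ref{cor:EC_vol} and \ref{cor:EC_surf} plus Remark~\ref{ES_damp} for the advective, non-conservative and damping parts, then for the viscous row substitute the lifted-gradient equation, use Lemma~\ref{lemma1} nodewise to obtain $\iprodN{J\bigstatevec{Q},\bigcontravec{F}^v}=J\iprodN{\bigstatevec{Q},\twentysevenMatrix{K}\bigstatevec{Q}}\ge 0$, and show that the BR1 viscous surface terms cancel after summing over elements. The only cosmetic difference is that the paper writes the per-face residual explicitly as $\avg{\bigcontravec{F}^v\!\cdot\!\spacevec{n}}^T\jump{\statevec{W}}-\jump{(\bigcontravec{F}^v\!\cdot\!\spacevec{n})^T\statevec{W}}+\avg{\statevec{W}}^T\jump{\bigcontravec{F}^v\!\cdot\!\spacevec{n}}$ and kills it with the product-jump identity \eqref{eq:jumpProperty}, whereas you phrase the same cancellation as a telescoping sum over the two adjacent elements.
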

\begin{proof}
From Corollaries \ref{cor:EC_vol}, \ref{cor:EC_surf} we know that the volume, surface and non-conservative terms of the advective portions of the resistive GLM-MHD equations cancel in entropy space. The remaining parts of the contracted DG approximation are
\begin{equation}\label{eq:schemeFinalInProof}
 \begin{aligned}
\iprodN{J\statevec U_t,\statevec{W} } 
&= \iprodN{\spacevec{\mathbb{D}}^S\bigcontravec F^v,\statevec{W} }+\int\limits_{\partial E,N}\statevec{W}^T\left\{\left(\bigcontravec F^{v,*} - \bigcontravec F^{v}\right)\cdot\spacevec n \right\}\dS + \left\langle J \statevec{R},\statevec{W} \right\rangle_N\hfill, \\
\iprodN{J\bigstatevec Q,\bigcontravec F^v  }
&= \int\limits_{\partial E,N} {{{\statevec W}^{*,T}}\left( {\bigcontravec F^v }  \cdot \spacevec n\right)\dS}  
- \iprodN{\statevec W, {\spacevec{\mathbb{D}}^S \bigcontravec F^v } }. \hfill \\ 
 \end{aligned}
\end{equation}
We consider the first term of the second equation and insert the alternate form of the viscous flux rewritten in terms of the gradient of the entropy variables as in the continuous analysis \eqref{Kgradient}. We use the known property that the viscous flux matrix $\twentysevenMatrix{K}$ is symmetric positive semi-definite for the resistive MHD equations to see that
\begin{equation}
\label{eq:disc_visc_volint_estimate}
\iprodN{J\bigstatevec{Q},\bigstatevec{F}_v} = J \, \iprodN{\bigstatevec{Q},\twentysevenMatrix{K}\,\bigstatevec{Q}}\geq  0.
\end{equation}
Next, we insert the second equation of \eqref{eq:schemeFinalInProof} into the first and use the estimate \eqref{eq:disc_visc_volint_estimate} to get
\begin{equation}\label{eq:schemeFinalInProof2}
\iprodN{J\statevec U_t,\statevec{W} } 
\leq \int\limits_{\partial E,N}\left\{\statevec{W}^T\left(\left(\bigcontravec F^{v,*} - \bigcontravec F^{v}\right) \cdot\spacevec n \right) + {\statevec W^{*,T}}\left( {\bigcontravec F^v }  \cdot \spacevec n\right)\right\}\dS + \left\langle J \statevec{R},\statevec{W} \right\rangle_N
\end{equation}
From Remark \ref{ES_damp} we know, that we can also ignore the discrete damping source term without violating the inequality. After summing over all elements we
can replace the left hand side by the total entropy derivative according to \eqref{totalEntr} and obtain interface jumps on the right hand side as in the previous analysis
\begin{equation}\label{eq:schemeFinalInProof3}
\frac{d \overline{S}}{d t}
\leq \sum\limits_\interiorfaces \int\limits_{N}\left\{\left(\bigcontravec F^{v,*}\cdot\spacevec n\right)^T\jump{\statevec{W}} - \jump{\left(\bigcontravec F^{v}\cdot\spacevec n\right)^T\statevec{W}}  + \left(\statevec{W}^{*}\right)^T\jump{\bigcontravec F^v \cdot \spacevec n}\right\}\dS.
\end{equation}

With the choice of the numerical surface fluxes for the viscous terms \eqref{eq:BR1Fluxes} we see that each term on the right hand side of \eqref{eq:schemeFinalInProof3} takes the form 
\begin{equation}\label{eq:viscFluxInt}
\resizebox{0.925\hsize}{!}{$
\left(\bigcontravec F^{v,*}\cdot\spacevec n\right)^T\jump{\statevec{W}} - \jump{\left(\bigcontravec F^{v}\cdot\spacevec n\right)^T\statevec{W}}  + \left(\statevec{W}^{*}\right)^T\jump{\bigcontravec F^v \cdot \spacevec n} = \avg{\bigcontravec F^{v}\cdot\spacevec n}^T\jump{\statevec{W}} - \jump{\left(\bigcontravec F^{v}\cdot\spacevec n\right)^T\statevec{W}}  + \avg{\statevec{W}}^T\jump{\bigcontravec F^v \cdot \spacevec n}$}
\end{equation}
and repeated use of the jump property \eqref{eq:jumpProperty} gives us
\begin{equation}
 \jump{(\bigcontravec{F}^{v}\cdot\spacevec n)^T\statevec{W}} = \avg{\bigcontravec{F}^{v}\cdot\spacevec n}^T\jump{\statevec{W}} + \jump{\bigcontravec{F}^{v}\cdot\spacevec n}^T\avg{\statevec{W}},
\end{equation}
so that the contribution of the viscous numerical fluxes at the interior faces \eqref{eq:viscFluxInt} vanishes exactly. Again, assuming periodic boundary conditions for the viscous fluxes, too, the final discrete entropy statement is
\begin{equation}\label{eq:entr_stab_bounds}
\frac{d \overline{S}}{d t} \leq 0,
\end{equation}
which proves discrete entropy stability for the resistive GLM-MHD equations.
\end{proof}

\begin{rem}
We can introduce additional dissipation without violating the entropy condition by replacing the EC fluxes at element interfaces by the ES fluxes, e.g., \eqref{ES_flux}.
\end{rem}

\begin{rem}
We presented the proofs of entropy stability for the DG approximation on two-dimensional uniform Cartesian meshes. However, for this simplified mesh assumption, the extension to three spatial dimensions is straightforward. This is because the derivatives in the DG approximation are decoupled in each spatial direction. Therefore, the proofs presented throughout this section hold in three dimensions by adding a $\zeta-$direction in the volume contributions and surface coupling is done at neighboring two-dimensional ``faces'' rather than at one dimensional ``edges''.
\end{rem}

%%%%%%%%%%%%%%%%%%%%%%%%%%%%%%%%%%%%%%%%%%%%%%%%%%%%%%%%%%%%%%%%%%%%%%%%
\section{Numerical results}\label{Sec:Num}
%%%%%%%%%%%%%%%%%%%%%%%%%%%%%%%%%%%%%%%%%%%%%%%%%%%%%%%%%%%%%%%%%%%%%%%%

In this section we present numerical tests to demonstrate the high-order accuracy, entropy conservation/stability, divergence cleaning capability and increased robustness of the numerical methods derived in this work. We begin in Sec. \ref{Sec:Conv_MS} with the method of manufactured solutions to verify the high-order convergence for the resistive GLM-MHD scheme. Section \ref{Sec:EC}, though an academic test case, numerically shows the entropy conservation property for the DG approximation of the ideal GLM-MHD using a weak shock tube with periodic boundaries. Next, the divergence cleaning properties of the high-order ideal GLM-MHD scheme are given in Sec. \ref{Sec:Gauss}. Finally, we provide examples, in which every piece of the presented numerical solver are exercised, to demonstrate the increased robustness of the entropy stable DG approximation for the resistive GLM-MHD equations. In these investigations we will show the necessity of the entropy stable framework in conjunction with GLM hyperbolic divergence cleaning to provide numerical stability for problems with very weak diffusivity. Specifically, in two spatial dimensions, we use a viscous version of the well-known Orszag-Tang vortex, e.g. \cite{altmann2012}, and in three spatial dimensions increased robustness is presented for an under-resolved turbulence computation using an extension of the Taylor-Green vortex to MHD models, e.g. \cite{Lee2010,Brachet2013}.

Unless otherwise stated, we set \texttt{CFL}=\texttt{DFL}=0.5, the damping parameter $\alpha=0$ and the GLM propagation speed $c_h$ to be proportional to the maximum advective wave speed. Also, a suitable choice for the initial value of $\psi$ is $\psi(\spacevec{x},t=0) = 0$. 

%%%%%%%%%%%%%%%%%%%%%%%%%%%%%%%%%%%%%%%%%%%%%%%%%%%%%%%%%%%%%%%%%%%%%%%%
\subsection{Manufactured solution test for viscous equations}\label{Sec:Conv_MS}
%%%%%%%%%%%%%%%%%%%%%%%%%%%%%%%%%%%%%%%%%%%%%%%%%%%%%%%%%%%%%%%%%%%%%%%%

In order to verify the high-order approximation of the entropy stable DG discretization \eqref{eq:schemeFinal} for the resistive GLM-MHD system \eqref{resGLMMHD}, we run a convergence test with the method of manufactured solutions. To do so, we assume a solution of the form
\begin{equation}
\statevec{u} = \left[h,h,h,0,2h^2,h,-h,0,0\right]^T \text{ with } h = h(x,y,t) = \sin(2\pi(x+y)-4t)+4.
\label{manufac}
\end{equation}
This generates a residual for the resistive GLM-MHD system defined as
\begin{equation}
\statevec{u}_t + \vec{\nabla} \cdot \bigstatevec{f}^a(\statevec{u}) - \vec{\nabla} \cdot \bigstatevec{f}^v(\statevec{u},\vec{\nabla} \statevec{u}) = \begin{pmatrix} h_t + 2h_x \\ h_t + h_x + 4hh_x \\ h_t + h_x + 4hh_x \\ 0 \\ 4hh_t + 16hh_x - 2h_x - 4\resistivity(h_x^2+h h_{xx})-4\viscosity h_{xx}/\text{Pr} \\ h_t + 2h_x - 2\resistivity h_{xx} \\ -h_t - 2h_x + 2\resistivity h_{xx} \\ 0 \\ 0 \end{pmatrix}
\label{residual}
\end{equation}
for $\gamma = 2$ and $\text{Pr} = 0.72$. To solve the inhomogeneous problem we subtract the discrete residual from the approximate solution after each Runge-Kutta step. We run the test case on the periodic domain $\Omega = \left[0,1\right]^2$ up to the final time $T=0.5$. Furthermore, we set $\viscosity=\resistivity=0.05$ and finally obtain the convergence results illustrated in Tables \ref{EOCres3} and \ref{EOCres4}. The errors of $v_2$ and $B_2$ are identical to the ones of $v_1$ and $B_1$, whereas $v_3$ and $B_3$ are equal to zero for all time.

\begin{table}[h!]
	\centering
		\begin{tabular}{c|c|c|c|c|c}
			 \small $\Delta x$ & $L^2(\rho)$ & $L^2(v_1)$ & $L^2(p)$ & $L^2(B_1)$ & $L^2(\psi)$ \\
			\hline 
			$1/5$ & 3.45E-03 &	1.17E-03 &	1.38E-02	& 1.76E-03 &	1.76E-03\\
			\hline
			$1/10$ & 2.28E-04	& 8.59E-05	&	7.95E-04 &	1.02E-04	& 1.31E-04\\
			\hline
			$1/20$ & 1.48E-05 &	5.82E-06	&	5.23E-05 &	5.40E-06	&	8.64E-06\\
			\hline
			$1/40$ & 8.25E-07	& 3.33E-07	&	3.12E-06	& 3.07E-07	&	5.48E-07\\
			\hline
			\hline 	
			avg EOC & 4.01	& 3.93	&	4.04	& 4.16	&	3.88\\
			\hline 
		\end{tabular}
		\caption{$L^2$-errors and EOC of manufactured solution test for resistive GLM-MHD and $N=3$.}\label{EOCres3}
\end{table}
\begin{table}[h!]
	\centering
		\begin{tabular}{c|c|c|c|c|c}
			 \small $\Delta x$ & $L^2(\rho)$ & $L^2(v_1)$ & $L^2(p)$ & $L^2(B_1)$ & $L^2(\psi)$ \\
			\hline 
			$1/5$ & 1.87E-04	& 8.56E-05	&	1.03E-03	& 9.76E-05	&	1.14E-04\\
			\hline
			$1/10$ & 5.38E-06	& 2.74E-06	&	4.01E-05	& 2.75E-06 &	3.39E-06\\
			\hline
			$1/20$ & 1.98E-07 &	7.34E-08 &	1.46E-06	& 8.02E-08	&	1.04E-07\\
			\hline
			$1/40$ & 7.66E-09 &	1.57E-09	&	5.80E-08	& 2.48E-09	& 3.23E-09\\
			\hline
			\hline 	
			avg EOC & 4.86 &	5.25	&	4.70	& 5.09	&	5.04\\
			\hline 
		\end{tabular}
		\caption{$L^2$-errors and EOC of manufactured solution test for resistive GLM-MHD and $N=4$.}\label{EOCres4}
\end{table}

We note, if we apply an entropy conservative approximation to this test case, the convergence order would exhibit an odd/even effect depending on the polynomial degree. This phenomenon of optimal convergence order for even and sub-optimal convergence order for odd polynomial degrees has been previously observed for entropy conservative DG approximations, e.g. \cite{gassner_skew_burgers,Gassner:2016ye}.

%%%%%%%%%%%%%%%%%%%%%%%%%%%%%%%%%%%%%%%%%%%%%%%%%%%%%%%%%%%%%%%%%%%%%%%%
\subsection{Entropy conservation}\label{Sec:EC}
%%%%%%%%%%%%%%%%%%%%%%%%%%%%%%%%%%%%%%%%%%%%%%%%%%%%%%%%%%%%%%%%%%%%%%%%

In this section we focus on the entropy conservation properties of the method. As such, we deactivate the numerical dissipation introduced by the interface stabilization terms \eqref{ES_flux} and set $\viscosity=\resistivity=0$ to remove the resistive terms, because entropy conservation only applies to the ideal GLM-MHD equations. 

In contrast to the previous convergence investigation, it does not make sense to measure entropy conservation with well-resolved smooth solutions, as in this case the errors in the DGSEM would converge spectrally fast to entropy conservation. Thus, to make this conservation test challenging we consider a oblique shock tube test with periodic boundary conditions. As the flow evolves physical entropy dissipation occurs at the shock. However, the high-order DG scheme artificially preserves entropy by construction and, near discontinuities, energy is re-distributed at the smallest resolvable scale \cite{Mishra2011}. This is why we select this test to verify the theoretical properties of the derived DG scheme.

We define the test case in the periodic box $\Omega = \left[0,1\right]^2$ initialized by a small diagonal shock (see e.g. \cite{Toth2000}):
\begin{table}[h!]
	\centering
		\begin{tabular}{c|c|c|c|c|c|c|c|c}
			  & $\rho$ & $v_1$ & $v_2$ & $v_3$ & $p$ & $B_1$ & $B_2$  & $B_3$ \\
			\hline
			$x<y$ & 1.0 &	0.0 &	0.0 &	0.0 &	1.0 &	$2.0/\sqrt{4.0\pi}$ &	$4.0/\sqrt{4.0\pi}$ & $2.0/\sqrt{4.0\pi}$\\
			\hline
			$x\geq y$ & 1.08 &	0.6 & 0.01 &	0.5 & 0.95 &	$2.0/\sqrt{4.0\pi}$ &	$3.6/\sqrt{4.0\pi}$ &	$2.0/\sqrt{4.0\pi}$\\			
			\hline 
		\end{tabular}
		\caption{Initialized primitives for the entropy conservation test.}
\end{table}\\
Furthermore, we set $\gamma\!=\! 5/3,\; \Delta x\! =\! \Delta y \!=\! 1/20$ and  $T\!=\! 0.5$.

The entropy conservative DGSEM for the ideal GLM-MHD equations is essentially dissipation-free. In the case of discontinuous solutions, the only dissipation introduced into the approximation is through the time integration scheme. Hence, we can use the error in the total entropy as a measure of the temporal convergence of the method. We know that by shrinking the \texttt{CFL} number, the dissipation of the time integration scheme is lessened and entropy conservation is captured more accurately, e.g. \cite{Fjordholm2011}.

In the log-log plot, Figure \ref{RZ}, we illustrate the numerical results for $N=2$ and $N=3$, which confirm this precise behavior. There is a reduction of the entropy error according to the fourth order time integration scheme for decreasing \texttt{CFL} numbers. That is, halving the \texttt{CFL} number leads to a factor of 16 reduction in the entropy error. In fact, if the \texttt{CFL} number is taken small enough, the entropy conservation error is driven to the order of machine precision. This demonstrates the discrete entropy conservation property of the scheme without viscous terms modulo the discretization errors introduced by the time integration scheme.

\begin{figure}[h!]
\begin{center}
\includegraphics[width=9cm]{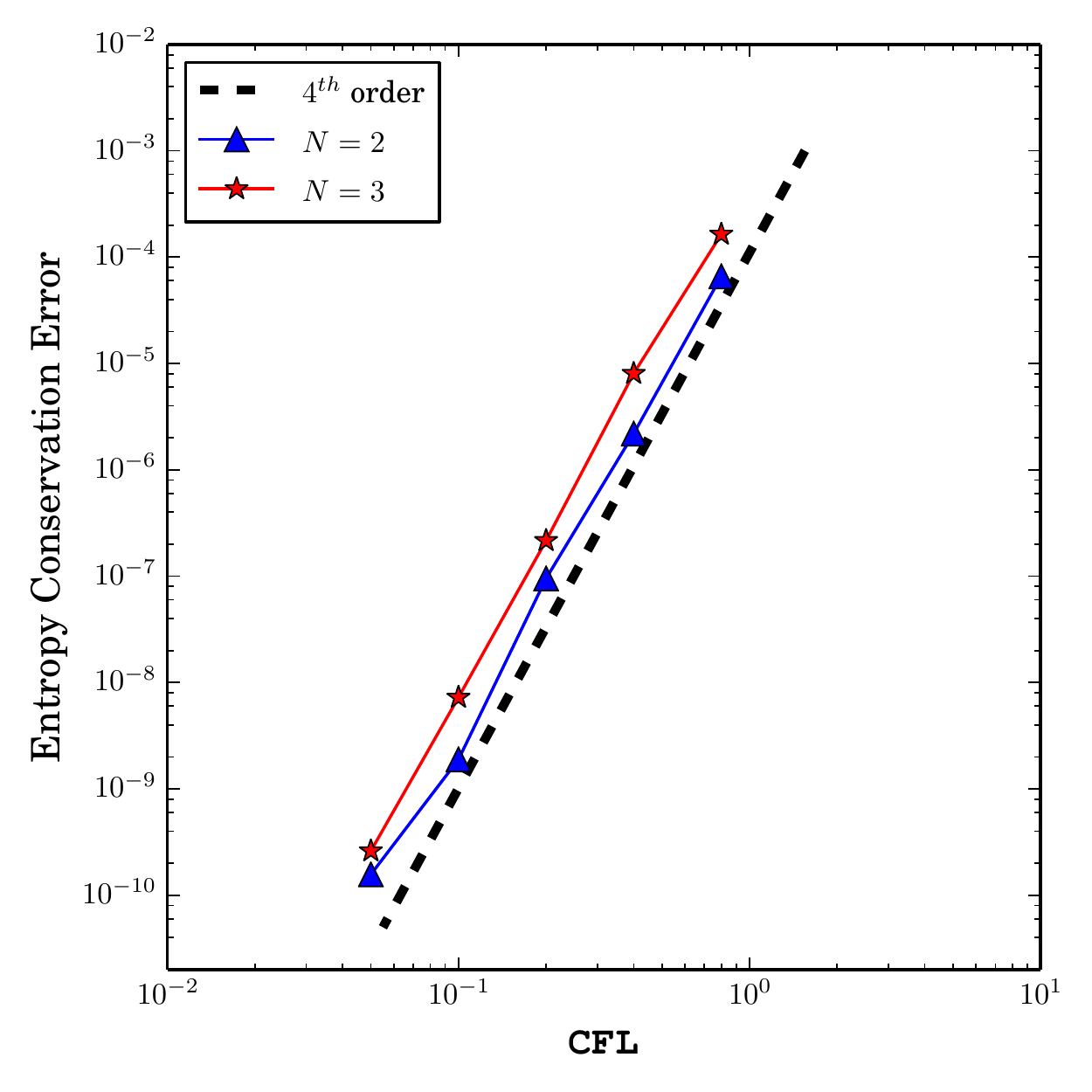}
\caption{Log-log plot of entropy conservation error for $N=2$ and $N=3$ on a uniform $20\times 20$ Cartesian mesh and final time T=0.5.}
\label{RZ}
\end{center}
\end{figure}

%%%%%%%%%%%%%%%%%%%%%%%%%%%%%%%%%%%%%%%%%%%%%%%%%%%%%%%%%%%%%%%%%%%%%%%%
\subsection{Divergence Cleaning}\label{Sec:Gauss}
%%%%%%%%%%%%%%%%%%%%%%%%%%%%%%%%%%%%%%%%%%%%%%%%%%%%%%%%%%%%%%%%%%%%%%%%

In order to demonstrate the reduction of the divergence error in the magnetic field, we use a maliciously chosen non-divergence-free initialization in $\Omega = \left[-1,1\right]^2$ defined by a Gaussian pulse in the $x-$component of the magnetic field proposed in \cite{altmann2012}:
\begin{equation*}
 \rho(x,y,0) = 1, ~~~ E(x,y,0) = 6, ~~~ B_1(x,y,0) = \exp\left(-\frac{1}{2}\left[\frac{x^2+y^2}{0.11^2}\right]\right).
\end{equation*}
The other initial values are set to zero and the boundaries are periodic. Again we set $\gamma = 5/3$ and turn off physical viscosity. In Figure \ref{DivError} we illustrate the time evolution of the normalized discrete divergence error measured in terms of $\|\vec{\nabla} \cdot \spacevec{B}\|_{L^2(\Omega)}$ for $N=3$ and $20\times 20$ elements. Here, we show the simulation results of the ideal GLM-MHD approximation, in which the divergence error is solely propagated through the domain. However, due to the periodic nature of the boundaries, it is known that the divergence errors will simply advect back into the domain \cite{Dedner2002} with only minimal damping due to the high-order nature of the DG scheme. As such, we provide a study comparing the no divergence cleaning case against the GLM with additional damping and varying the value of $\alpha$ in \eqref{damping}. For this periodic test problem we see that extra damping is necessary to control errors in Figure \ref{DivError}. These high-order results reinforce a similar study done in the finite volume context \cite{Derigs2017}.

\begin{figure}[h!]
\begin{center}
\includegraphics[width=9cm]{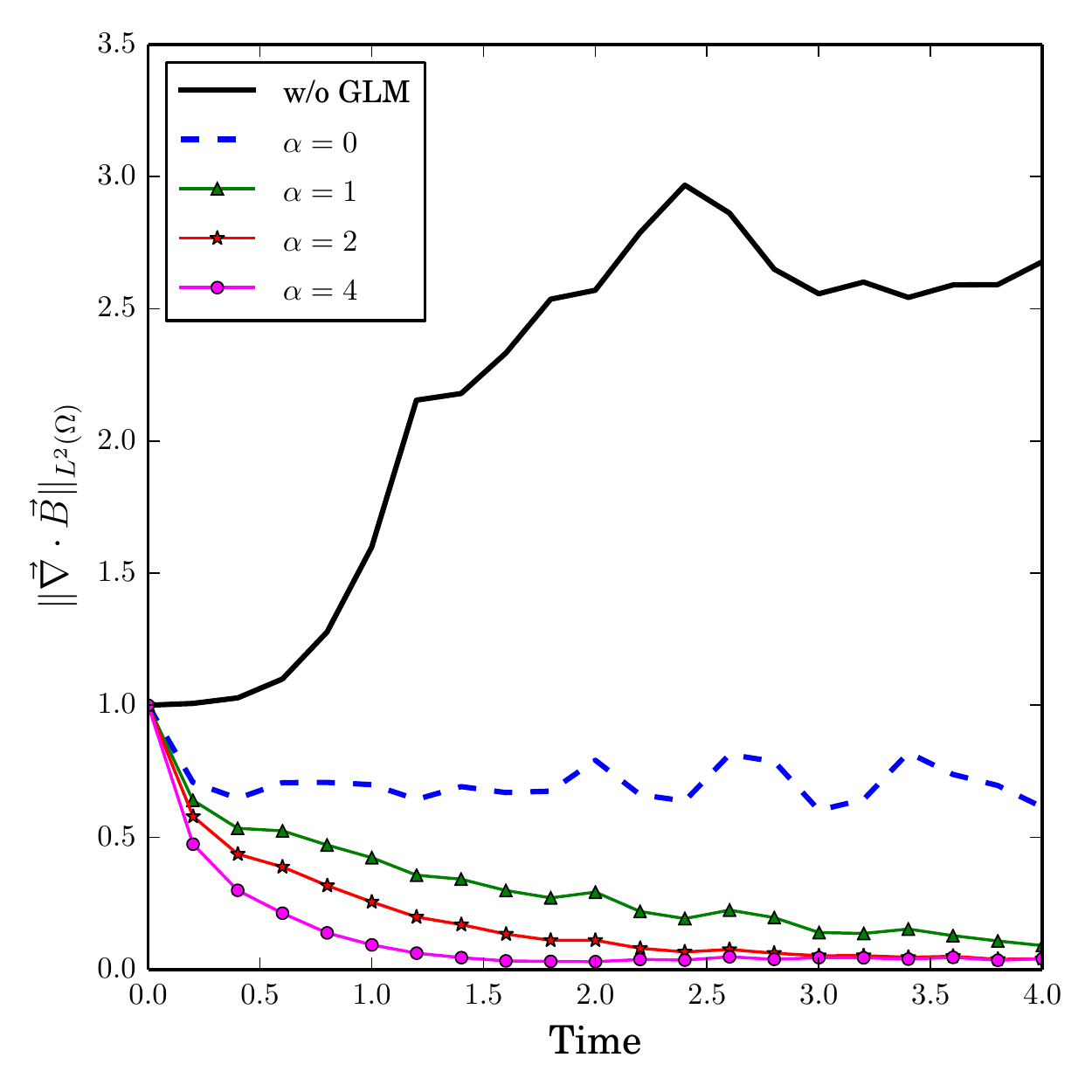}
\caption{Temporal evolution of the normalized discrete $L^2$ error in the divergence-free condition for $N=3$ in each spatial direction on $20\times 20$ uniform elements.}
\label{DivError}
\end{center}
\end{figure}

%%%%%%%%%%%%%%%%%%%%%%%%%%%%%%%%%%%%%%%%%%%%%%%%%%%%%%%%%%%%%%%%%%%%%%%%
\subsection{Viscous Orszag-Tang vortex}\label{Sec:OTV}
%%%%%%%%%%%%%%%%%%%%%%%%%%%%%%%%%%%%%%%%%%%%%%%%%%%%%%%%%%%%%%%%%%%%%%%%

Next, we use a viscous version of the Orszag-Tang vortex \cite{altmann2012,Orszag1979,Derigs2017} to demonstrate the increased robustness of the entropy stable approximation including GLM divergence cleaning. The initial conditions are simple and smooth, but evolve to contain complex structures and energy exchanges between the velocity and magnetic fields. The domain is $\Omega = [0,1]^2$ with periodic boundary conditions and initial data
 \begin{align}
\rho = & ~ 1  ~~~~~~~~~~~~\, v_1 = -\sin(2\pi y) ~~~~~~~~~~~~\, v_2 = \sin(2\pi x) \\[0.1cm]
p = & ~ \frac{1}{\gamma} ~~~~~~~~~~~ B_1 = -\frac{1}{\gamma} \sin(2\pi y) ~~~~~~~~~\, B_2 = \frac{1}{\gamma} \sin(4\pi x)
\end{align}
with $\gamma = 5/3$. To include diffusivity in the simulation we select the Prandtl number to be $\mathrm{Pr}=0.72$ and the viscosity and resistivity parameters to be
\begin{equation}\label{eq:resParameters}
\viscosity = 8.5\times 10^{-4},\quad\resistivity = 10^{-5}.
\end{equation}
This selection of the diffusive coefficients corresponds to a kinematic Reynolds number (Re) of approximately $1170$ and a magnetic Reynolds number ($\mathrm{Re_m}$) of $100,000$. The initial conditions evolve to a final time of $T=0.5$.

Moreover, the simulation uses a polynomial order $N=7$ in each spatial direction on a $20\times 20$ Cartesian grid. We run three variants of the DG approximation to demonstrate the necessity of the entropy stable scheme as well as the GLM modification for numerical stability of this particular test case configuration in Table \ref{tab:crashTestsOT}.
\begin{table}[h!]
	\centering
		\begin{tabular}{| l | c |}
		         \hline
			 \textsc{Configuration} & $N=7$, $20 \times 20$ mesh \\
			\hline
			\hline 
			Standard DGSEM with GLM divergence cleaning & \textbf{\textit{crash}} \\
			\hline
			Entropy stable DGSEM without GLM divergence cleaning & \textbf{\textit{crash}} \\
			\hline
			Entropy stable DGSEM with GLM divergence cleaning & \textbf{\textit{result}}  \\
			\hline 
		\end{tabular}
		\caption{Comparison of the numerical stability of the standard DGSEM against the entropy stable version with and without GLM divergence cleaning, applied to the viscous Orszag-Tang vortex problem.}\label{tab:crashTestsOT}
\end{table}

These results demonstrate that the entropy stable formulation as well as numerical treatment of the divergence-free constraint are needed to create a robust scheme for this configuration. %Because the viscosity and resistivity coefficients are small \eqref{eq:resParameters}, the viscous time step selection \eqref{eq:CFL_timestep_2D_Visc} dominates the explicit time integration of the DG approximation. 
We shrink the time step by setting \texttt{CFL}=\texttt{DFL}=0.25 and find the same numerical stability results for the three configurations presented in Table \ref{tab:crashTestsOT}. This reinforces that the numerical instabilities in the approximate flow are caused by errors other than those introduced by the time integration scheme. For the entropy stable DG simulation with GLM divergence cleaning we illustrate the density together with contour lines of the magnetic field at the final time in Figure \ref{OTV}. We also provide a time-dependent plot of the normalized total entropy for the case \texttt{CFL}=\texttt{DFL}=0.5 to show the entropy stability of the entire approximation in Figure \ref{EE}.

We note that the high-order entropy stable approximation is not guaranteed to be oscillation-free near shocks, e.g. \cite{wintermeyer2017}. Thus, there are still observable, albeit small, numerical artifacts in the approximate solution (Figure \ref{OTV}). But, due to the viscous and resistive terms, the entropy stable DGSEM applied to this setup of the Orszag-Tang problem can run without any additional shock capturing method.

\begin{figure}[h!]
\begin{center}
\includegraphics[width=13cm]{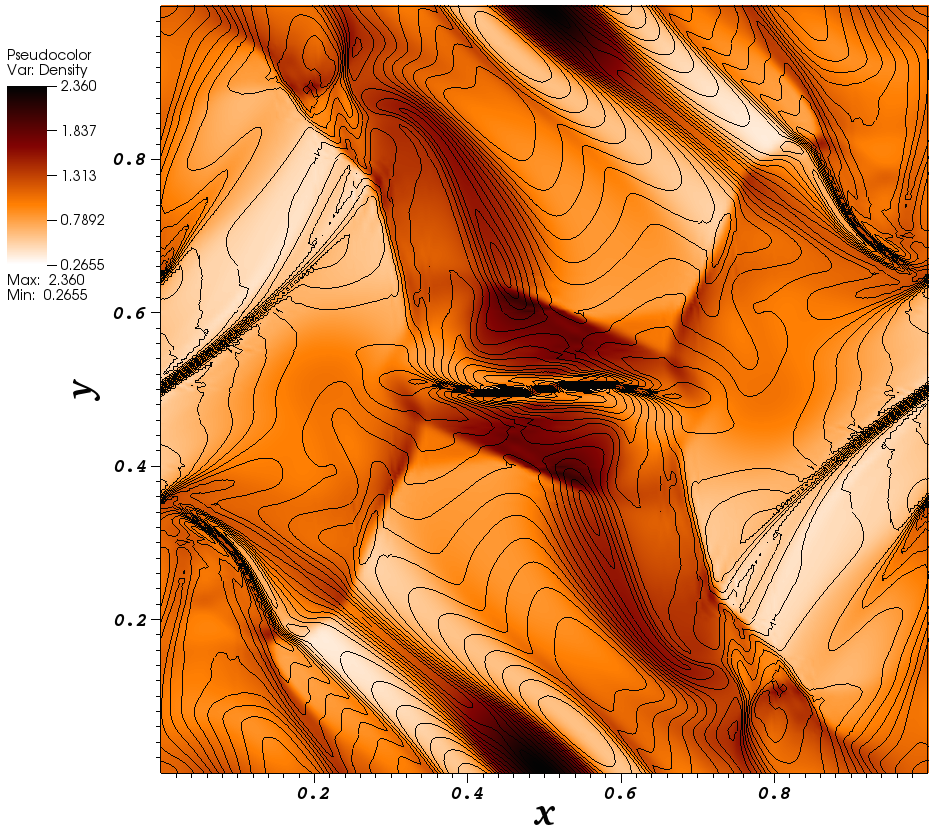}
\caption{Density pseudocolor plot with overlaid magnetic field lines for the viscous Orszag-Tang vortex at $T=0.5$ with $N=7$ in each spatial direction on $20\times 20$ element mesh and diffusivity coefficients \eqref{eq:resParameters}.}
\label{OTV}
\end{center}
\end{figure}

\begin{figure}[h!]
\begin{center}
\includegraphics[width=8cm]{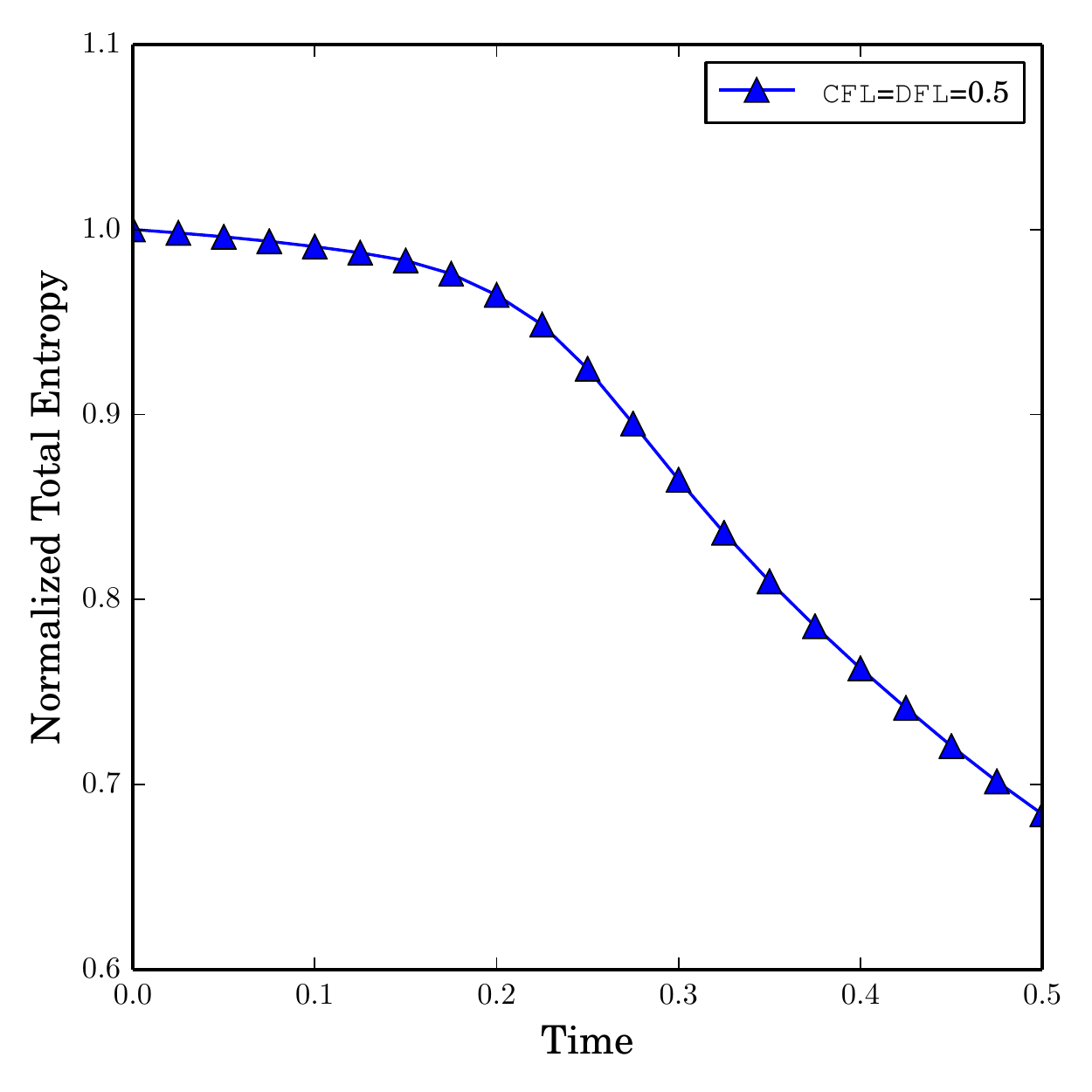}
\caption{Time evolution plot of the total entropy in the viscous Orszag-Tang vortex with $N=7$ on $20\times 20$ elements with diffusivity coefficients \eqref{eq:resParameters}.}
\label{EE}
\end{center}
\end{figure}

%%%%%%%%%%%%%%%%%%%%%%%%%%%%%%%%%%%%%%%%%%%%%%%%%%%%%%%%%%%%%%%%%%%%%%%%
\subsection{Insulating Taylor-Green vortex}
%%%%%%%%%%%%%%%%%%%%%%%%%%%%%%%%%%%%%%%%%%%%%%%%%%%%%%%%%%%%%%%%%%%%%%%%

Lastly, we consider a modification of the well-known Taylor-Green vortex (TGV) of the the compressible Navier-Stokes equations, e.g. \cite{Shu2005}, to include magnetic fields \cite{Lee2008,Lee2010,Brachet2013} as a final example to demonstrate the increased robustness of the entropy stable DG approximation. The TGV flow was first introduced for the incompressible Navier-Stokes equations in \cite{taylor1937mechanism} as a model problem for the analysis of transition and turbulence decay in a cubic domain with periodic boundary conditions. The test case is particularly interesting, because a simple set of initial conditions evolve to include a wide range of spatial scales as well as turbulent structures. Several extensions of the TGV are available for the ideal MHD equations to model turbulent plasmas, e.g. \cite{Lee2010}. Here we consider a particular insulating version of the TGV for the resistive MHD equations. Although the original extension of the TGV was done in the ideal MHD context \cite{Lee2008}, we select the viscosity and resistivity coefficients to be \eqref{eq:resParameters}.

We adopt a modified version of the initial conditions suited for compressible flow solvers. To create an initial condition for the pressure involves solving a Poisson equation from the incompressible ideal MHD equations. Details of this process are outlined in \ref{app:ICs}. Because the definition of the pressure is unique up to a constant, we select the initial pressure such that the flow is nearly incompressible with a maximum Mach number of 0.1. The initial conditions prescribe a state in $\Omega = [0, 2\pi]^3$ with the density, velocity components, pressure, and magnetic field components defined as:
\begin{equation}\label{eq:insulTGV}
\begin{aligned}
\rho &= 1\\[0.1cm]
\spacevec{v} &= \left(\sin(x)\cos(y)\cos(z),\,-\cos(x)\sin(y)\cos(z),\,0\right)^T\\[0.1cm]
p     &= \frac{100}{\gamma} + \frac{1}{16}\left(\cos(2x)+\cos(2y)\right)\left(2+\cos(2z)\right) + \frac{1}{16}\left(\cos(4x) + \cos(4y)\right)\left(2-\cos(4z)\right)\\[0.1cm]
\spacevec{B} &= \left(\cos(2x)\sin(2y)\sin(2z),\,-\sin(2x)\cos(2y)\sin(2z),\,0\right)^T\\[0.1cm]
\end{aligned}
\end{equation}
where $\gamma = 1.4$. The initial conditions \eqref{eq:insulTGV} are evolved up to the final time $T=20.0$ such that turbulent structures can develop and evolve. We note that, for a compressible simulation the initial condition of the pressure will change, if other insulating or conducting TGV formulations from \cite{Lee2008} are chosen.

We use this turbulent test case of the insulated TGV \eqref{eq:insulTGV} to demonstrate the increased robustness of the proposed solver described in this work. To do so, we run three spatial configurations each with $64^3$ degrees of freedom (DOFs): 
\begin{itemize}
\item $N=3$ in each spatial direction with a $16\times 16 \times 16$ mesh
\item $N=7$ in each spatial direction with an $8\times 8 \times 8$ mesh
\item $N=15$ in each spatial direction with a $4\times 4 \times 4$ mesh
\end{itemize}
We perform a comparison of the standard DGSEM against the entropy stable DGSEM, both with GLM divergence cleaning. We obtain similar robustness results for each polynomial order examined in this test case and collect our findings in Table \ref{tab:crashTestsTGV}. This highlights that the entropy stable DG method with GLM divergence cleaning is more numerically stable for under-resolved turbulence computations. 
\begin{table}[h!]
	\centering
		\begin{tabular}{| l | c | c |}
		        \hline
                         \textsc{DOFs = $64^3$} & Standard DGSEM with GLM & Entropy Stable DGSEM with GLM \\
			\hline
			\hline
			$N=3$, $16 \times 16 \times 16$ mesh & \textbf{\textit{crash}} & \textbf{\textit{result}} \\
			\hline 
			$N=7$, $8 \times 8 \times 8$ mesh & \textbf{\textit{crash}} & \textbf{\textit{result}} \\
			\hline 
			$N=15$, $4 \times 4 \times 4$ mesh & \textbf{\textit{crash}} & \textbf{\textit{result}} \\
			\hline 
		\end{tabular}
		\caption{Comparison of the standard versus entropy stable high-order DGSEM for the insulating Taylor-Green vortex problem with three polynomial orders, all with $64^3$ DOFs.}\label{tab:crashTestsTGV}
\end{table}

The low Mach number insulating TGV \eqref{eq:insulTGV} considered here is used purely as a robustness test case. An analysis of the turbulence modeling capabilities obtained by the entropy stable DGSEM with GLM divergence cleaning is outside the scope of the current work, but is the focus of future research. These results underline that controlling aliasing errors in a high-order numerical approximation of the resistive GLM-MHD equations with an entropy stable formulation offers increased robustness. This reinforces previous results regarding entropy stable DG methods stabilizing under-resolved turbulence computations for the compressible Euler equations \cite{Gassner:2016ye}. 

\section{Conclusions}\label{Sec:Concl}

In this work, we have presented a novel entropy stable nodal DG scheme for the resistive MHD equations including GLM divergence cleaning. First, we have analyzed the continuous entropic properties of the underlying system in order to demonstrate that the resistive GLM-MHD equations satisfy the entropy inequality. This also provided guidance for the semi-discrete formulation to find necessary entropy stability conditions. Concerning this matter, we have always followed the methodology of constructing the discrete thermodynamic properties first for the ideal MHD system with special focus on the divergence diminishing GLM parts. Lastly, the discretization of the resistive parts in the proposed DG approximation has been shown to be entropy stable. 

We have also provided numerical results to verify the theoretical findings. In particular, we have shown, with the method of manufactured solutions, that the entropy stable DGSEM solver described in this work is high-order accurate. Next, we have verified the entropy conservative nature of the underlying scheme as well as the utility of the GLM divergence cleaning and damping term. Finally, we have demonstrated the increased robustness of the entropy stable DG approximation. The last two numerical tests reveal that the entropy stable discretization with hyperbolic divergence cleaning significantly improves the robustness in two as well as three spatial dimensions. Thus, especially for large-scale applications, e.g. in space or astrophysics, the entropy stable high-order scheme for the resistive GLM-MHD equations derived herein offers accuracy and robustness advantages compared to other state-of-the-art DG solvers.

%The focus of this work was the construction of an entropy stable nodal DG scheme including GLM divergence cleaning on Cartesian meshes. Thus, future work includes a complete entropy analysis and implementation of the new scheme into the three-dimensional context on curvilinear, unstructured hexahedral meshes. Of additional interest is the compatibility of the high-order entropy stable discretization with shock capturing techniques or turbulence modeling.

%%%%%%%%%%%%%%%%%%%%%%%%%%%%%%%%%%%%%%%%%%%%%%%%%%%%%%%%%%%%%%%%%%%%%%%%
\section*{Acknowledgements}
%%%%%%%%%%%%%%%%%%%%%%%%%%%%%%%%%%%%%%%%%%%%%%%%%%%%%%%%%%%%%%%%%%%%%%%%
Gregor Gassner and Andrew Winters thank the European Research Council for funding through the ERC Starting Grant ``An Exascale aware and Un-crashable Space-Time-Adaptive Discontinuous Spectral Element Solver for Non-Linear Conservation Laws'' (\texttt{Extreme}), ERC grant agreement no. 714487.
Dominik Derigs and Stefanie Walch acknowledge the support of the Bonn-Cologne Graduate School for Physics and Astronomy (BCGS), which is funded through the Excellence Initiative, as well as the Sonderforschungsbereich (SFB) 956 on the ``Conditions and impact of star formation''.
Stefanie Walch thanks the Deutsche Forschungsgemeinschaft (DFG) for funding through the SPP 1573 ``The physics of the interstellar medium'' and the funding from the European Research Council via the ERC Starting Grant ``The radiative interstellar medium'' (\texttt{RADFEEDBACK}).
This work was partially performed on the Cologne High Efficiency Operating Platform for Sciences (CHEOPS) at the Regionales Rechenzentrum K\"oln (RRZK).

%%%%%%%%%%%%%%%%%%%%%%%%%%%%%%%%%%%%%%%%%%%%%%%%%%%%%%%%%%%%%%%%%%%%%%%%
\section*{References}
%%%%%%%%%%%%%%%%%%%%%%%%%%%%%%%%%%%%%%%%%%%%%%%%%%%%%%%%%%%%%%%%%%%%%%%%
\bibliography{Paper_ESDGSEM_GLMMHD}

%%%%%%%%%%%%%%%%%%%%%%%%%%%%%%%%%%%%%%%%%%%%%%%%%%%%%%%%%%%%%%%%%%%%%%%%
\appendix
%%%%%%%%%%%%%%%%%%%%%%%%%%%%%%%%%%%%%%%%%%%%%%%%%%%%%%%%%%%%%%%%%%%%%%%%

%%%%%%%%%%%%%%%%%%%%%%%%%%%%%%%%%%%%%%%%%%%%%%%%%%%%%%%%%%%%%%%%%%%%%%%%
\section{Dissipation matrices for entropy variables}\label{Sec:DisMatrix}
%%%%%%%%%%%%%%%%%%%%%%%%%%%%%%%%%%%%%%%%%%%%%%%%%%%%%%%%%%%%%%%%%%%%%%%%

In this section we explicitly state the missing block matrices necessary to define the diffusion terms for the entropy stable approximation of the resistive GLM-MHD equations from Lemma \ref{lemma1}:
\begin{equation}\label{eq:matK12}
\nineMatrix{K}_{12} = \frac{1}{w_5}\begin{pmatrix}
0 & 0 & 0 & 0 & 0 & 0 & 0 & 0 & 0 \\[0.15cm]
0 & 0 & \frac{2\viscosity}{3} & 0 & -\frac{2\viscosity w_3}{3w_5} & 0 & 0 & 0 & 0 \\[0.15cm]
0 & -\viscosity & 0 & 0 & \frac{\viscosity w_2}{w_5} & 0 & 0 & 0 & 0 \\[0.15cm]
0 & 0 & 0 & 0 & 0 & 0 & 0 & 0 & 0 \\[0.15cm]
0 & \frac{\viscosity w_3}{w_5} & -\frac{2\viscosity w_2}{3w_5} & 0 & -\frac{\viscosity w_2 w_3}{3w_5^2} & 0 & 0 & 0 & 0 \\[0.15cm]
0 & 0 & 0 & 0 & \frac{\resistivity w_6 w_7}{w_5^2} & -\frac{\resistivity w_7}{w_5} & 0 & 0 & 0 \\[0.15cm]
0 & 0 & 0 & 0 & 0 & 0 & 0 & 0 & 0 \\[0.15cm]
0 & 0 & 0 & 0 & -\frac{\resistivity w_6}{w_5} & \resistivity & 0 & 0& 0 \\[0.15cm]
0 & 0 & 0 & 0 & 0 & 0 & 0 & 0 & 0 \\[0.15cm]
\end{pmatrix}
\end{equation}

\begin{equation}
\nineMatrix{K}_{13} = \frac{1}{w_5}\begin{pmatrix}
0 & 0 & 0 & 0 & 0 & 0 & 0 & 0 & 0 \\[0.15cm]
0 & 0 & 0 & \frac{2\viscosity}{3} & -\frac{2\viscosity w_4}{3w_5} & 0 & 0 & 0 & 0 \\[0.15cm]
0 & 0 & 0 & 0 & 0 & 0 & 0 & 0 & 0 \\[0.15cm]
0 & -\viscosity & 0 & 0 & \frac{\viscosity w_2}{w_5} & 0 & 0 & 0 & 0 \\[0.15cm]
0 & \frac{\viscosity w_4}{w_5} & 0 & -\frac{2\viscosity w_2}{3 w_5} & -\frac{\viscosity w_2 w_4}{3w_5^2}+\frac{\resistivity w_6 w_8}{w_5^2} & -\frac{\resistivity w_8}{w_5} & 0 & 0 & 0 \\[0.15cm]
0 & 0 & 0 & 0 & 0 & 0 & 0 & 0 & 0 \\[0.15cm]
0 & 0 & 0 & 0 & 0 & 0 & 0 & 0 & 0 \\[0.15cm]
0 & 0 & 0 & 0 & -\frac{\resistivity w_6}{w_5} & \resistivity & 0 & 0 & 0 \\[0.15cm]
0 & 0 & 0 & 0 & 0 & 0 & 0 & 0 & 0 \\[0.15cm]
\end{pmatrix}
\end{equation}

\begin{equation}
\nineMatrix{K}_{21} = \frac{1}{w_5}\begin{pmatrix}
0 & 0 & 0 & 0 & 0 & 0 & 0 & 0 & 0 \\[0.15cm]
0 & 0 & -\viscosity & 0 & \frac{\viscosity w_3}{w_5} & 0 & 0 & 0 & 0 \\[0.15cm]
0 & \frac{2\viscosity}{3} & 0 & 0 & -\frac{2\viscosity w_2}{3 w_5} & 0 & 0 & 0 & 0 \\[0.15cm]
0 & 0 & 0 & 0 & 0 & 0 & 0 & 0 & 0 \\[0.15cm]
0 & -\frac{2\viscosity w_3}{3w_5} & \frac{\viscosity w_2}{w_5} & 0 & -\frac{\viscosity w_2 w_3}{3w_5^2} & 0 & 0 & 0 & 0 \\[0.15cm]
0 & 0 & 0 & 0 & \frac{\resistivity w_6 w_7}{w_5^2} & 0 & -\frac{\resistivity w_6}{w_5} & 0 & 0 \\[0.15cm]
0 & 0 & 0 & 0 & -\frac{\resistivity w_7}{w_5} & 0 & \resistivity & 0 & 0 \\[0.15cm]
0 & 0 & 0 & 0 & 0 & 0 & 0 & 0 & 0 \\[0.15cm]
0 & 0 & 0 & 0 & 0 & 0 & 0 & 0 & 0 \\[0.15cm]
\end{pmatrix}
\end{equation}

\begin{equation}
\resizebox{0.925\hsize}{!}{$
\nineMatrix{K}_{22} = \frac{1}{w_5}\begin{pmatrix}
0 & 0 & 0 & 0 & 0 & 0 & 0 & 0 & 0 \\[0.15cm]
0 & -\viscosity & 0 & 0 & \frac{\viscosity w_2}{w_5} & 0 & 0 & 0 & 0 \\[0.15cm]
0 & 0 & -\frac{4\viscosity}{3} & 0 & \frac{4\viscosity w_3}{3 w_5} & 0 & 0 & 0 & 0 \\[0.15cm]
0 & 0 & 0 & -\viscosity & \frac{\viscosity w_4}{w_5} & 0 & 0 & 0 & 0 \\[0.15cm]
0 & \frac{\viscosity w_2}{w_5} & \frac{4\viscosity w_3}{3 w_5} & \frac{\viscosity w_4}{w_5} & -\frac{\viscosity w_2^2}{w_5^2}-\frac{4\viscosity w_3^2}{3 w_5^2}-\frac{\viscosity w_4^2}{w_5^2}+\frac{\kappa}{R w_5}-\frac{\resistivity w_6^2}{w_5^2}-\frac{\resistivity w_8^2}{w_5^2} & \frac{\resistivity w_6}{w_5} & 0 & \frac{\resistivity w_8}{w_5} & 0 \\[0.15cm]
0 & 0 & 0 & 0 & \frac{\resistivity w_6}{w_5} & -\resistivity & 0 & 0 & 0 \\[0.15cm]
0 & 0 & 0 & 0 & 0 & 0 & 0 & 0 & 0 \\[0.15cm]
0 & 0 & 0 & 0 & \frac{\resistivity w_8}{w_5} & 0 & 0 & -\resistivity & 0 \\[0.15cm]
0 & 0 & 0 & 0 & 0 & 0 & 0 & 0 & 0 \\[0.15cm]
\end{pmatrix}$}
\end{equation}

\begin{equation}
\nineMatrix{K}_{23} = \frac{1}{w_5}\begin{pmatrix}
0 & 0 & 0 & 0 & 0 & 0 & 0 & 0 & 0 \\[0.15cm]
0 & 0 & 0 & 0 & 0 & 0 & 0 & 0 & 0 \\[0.15cm]
0 & 0 & 0 & \frac{2\viscosity}{3} & -\frac{2\viscosity w_4}{3 w_5} & 0 & 0 & 0 & 0 \\[0.15cm]
0 & 0 & -\viscosity & 0 & \frac{\viscosity w_3}{w_5} & 0 & 0 & 0 & 0 \\[0.15cm]
0 & 0 & \frac{\viscosity w_4}{w_5} & -\frac{2\viscosity w_3}{3 w_5} & -\frac{\viscosity w_3 w_4}{3 w_5^2}+\frac{\resistivity w_7 w_8}{w_5^2} & 0 & -\frac{\resistivity w_8}{w_5} & 0 & 0 \\[0.15cm]
0 & 0 & 0 & 0 & 0 & 0 & 0 & 0 & 0 \\[0.15cm]
0 & 0 & 0 & 0 & 0 & 0 & 0 & 0 & 0 \\[0.15cm]
0 & 0 & 0 & 0 & -\frac{\resistivity w_7}{w_5} & 0 & \resistivity & 0 & 0 \\[0.15cm]
0 & 0 & 0 & 0 & 0 & 0 & 0 & 0 & 0 \\[0.15cm]
\end{pmatrix}
\end{equation}

\begin{equation}
\nineMatrix{K}_{31} = \frac{1}{w_5}\begin{pmatrix}
0 & 0 & 0 & 0 & 0 & 0 & 0 & 0 & 0 \\[0.15cm]
0 & 0 & 0 & -\viscosity & \frac{\viscosity w_4}{w_5} & 0 & 0 & 0 & 0 \\[0.15cm]
0 & 0 & 0 & 0 & 0 & 0 & 0 & 0 & 0 \\[0.15cm]
0 & \frac{2\viscosity}{3} & 0 & 0 & -\frac{2\viscosity w_2}{3 w_5} & 0 & 0 & 0 & 0 \\[0.15cm]
0 & -\frac{2\viscosity w_4}{3 w_5} & 0 & \frac{\viscosity w_2}{w_5} & -\frac{\viscosity w_2 w_4}{3w_5^2}+\frac{\resistivity w_6 w_8}{w_5^2} & 0 & 0 & -\frac{\resistivity w_6}{w_5} & 0 \\[0.15cm]
0 & 0 & 0 & 0 & -\frac{\resistivity w_8}{w_5} & 0 & 0 & \resistivity & 0 \\[0.15cm]
0 & 0 & 0 & 0 & 0 & 0 & 0 & 0 & 0 \\[0.15cm]
0 & 0 & 0 & 0 & 0 & 0 & 0 & 0 & 0 \\[0.15cm]
0 & 0 & 0 & 0 & 0 & 0 & 0 & 0 & 0 \\[0.15cm]
\end{pmatrix}
\end{equation}

\begin{equation}
\nineMatrix{K}_{32} = \frac{1}{w_5}\begin{pmatrix}
0 & 0 & 0 & 0 & 0 & 0 & 0 & 0 & 0 \\[0.15cm]
0 & 0 & 0 & 0 & 0 & 0 & 0 & 0 & 0 \\[0.15cm]
0 & 0 & 0 & -\viscosity & \frac{\viscosity w_4}{w_5} & 0 & 0 & 0 & 0 \\[0.15cm]
0 & 0 & \frac{2\viscosity}{3} & 0 & -\frac{2\viscosity w_3}{3 w_5} & 0 & 0 & 0 & 0 \\[0.15cm]
0 & 0 & -\frac{2\viscosity w_4}{3 w_5} & \frac{\viscosity w_3}{w_5} & -\frac{\viscosity w_3 w_4}{3 w_5^5}+\frac{\resistivity w_7 w_8}{w_5^2} & 0 & 0 & -\frac{\resistivity w_7}{w_5} & 0 \\[0.15cm]
0 & 0 & 0 & 0 & 0 & 0 & 0 & 0 & 0 \\[0.15cm]
0 & 0 & 0 & 0 & -\frac{\resistivity w_8}{w_5} & 0 & 0 & \resistivity & 0 \\[0.15cm]
0 & 0 & 0 & 0 & 0 & 0 & 0 & 0 & 0 \\[0.15cm]
0 & 0 & 0 & 0 & 0 & 0 & 0 & 0 & 0 \\[0.15cm]
\end{pmatrix}
\end{equation}

\begin{equation}\label{eq:matK33}
\resizebox{0.925\hsize}{!}{$
\nineMatrix{K}_{33} = \frac{1}{w_5} \begin{pmatrix}
0 & 0 & 0 & 0 & 0 & 0 & 0 & 0 & 0 \\[0.15cm]
0 & -\viscosity & 0 & 0 & \frac{\viscosity w_2}{w_5} & 0 & 0 & 0 & 0 \\[0.15cm]
0 & 0 & -\viscosity & 0 & \frac{\viscosity w_3}{w_5} & 0 & 0 & 0 & 0 \\[0.15cm]
0 & 0 & 0 & -\frac{4\viscosity}{3} & \frac{4\viscosity w_4}{3 w_5} & 0 & 0 & 0 & 0 \\[0.15cm]
0 & \frac{\viscosity w_2}{w_5} & \frac{\viscosity w_3}{w_5} & \frac{4\viscosity w_4}{3 w_5} & -\frac{\viscosity w_2^2}{w_5^2}-\frac{\viscosity w_3^2}{w_5^2}-\frac{4\viscosity w_4^2}{3w_5^2}+\frac{\kappa}{Rw_5^2}-\frac{\resistivity w_6^2}{w_5^2}-\frac{\resistivity w_6^2}{w_5^2} & \frac{\resistivity w_6}{w_5} & \frac{\resistivity w_7}{w_5} & 0 & 0 \\[0.15cm]
0 & 0 & 0 & 0 & \frac{\resistivity w_6}{w_5} & -\resistivity & 0 & 0 & 0 \\[0.15cm]
0 & 0 & 0 & 0 & \frac{\resistivity w_7}{w_5} & 0 & -\resistivity & 0 & 0 \\[0.15cm]
0 & 0 & 0 & 0 & 0 & 0 & 0 & 0 & 0 \\[0.15cm]
0 & 0 & 0 & 0 & 0 & 0 & 0 & 0 & 0 \\[0.15cm]
\end{pmatrix}$}
\end{equation}

%%%%%%%%%%%%%%%%%%%%%%%%%%%%%%%%%%%%%%%%%%%%%%%%%%%%%%%%%%%%%%%%%%%%%%%%
\section{Proof for MHD volume contribution}\label{Sec:MHD_vol}
%%%%%%%%%%%%%%%%%%%%%%%%%%%%%%%%%%%%%%%%%%%%%%%%%%%%%%%%%%%%%%%%%%%%%%%%

We show in this section that the property \eqref{eq:MHDVolTerms}, reproduced here for convenience,
\begin{equation}\label{eq:volInProof}
\iprodN{\spacevec{\mathbb{D}}\bigcontravec F^{\ec,\text{MHD}},\statevec W} + \iprodN{\JanD\spacevec{\mathbb{D}}^{S}\contraspacevec{B},\statevec{W}} = 0,
\end{equation}
is satisfied. To do so we first expand each of the volume contribution from the advective terms
\begin{equation}
\resizebox{0.925\hsize}{!}{$
\iprodN{\spacevec{\mathbb{D}}\bigcontravec F^{\ec,\text{MHD}},\statevec{W}} =
 \sum\limits_{i,j=0}^{N}\omega_{i}\omega_j\statevec{W}^T_{ij}\left[\frac{\Delta x}{2}\sum\limits_{m=0}^N 2\dmat_{im}\statevec{F}_1^{\ec,\text{MHD}}(\statevec{U}_{ij}, \statevec{U}_{mj}) + \frac{\Delta x}{2}\sum\limits_{m=0}^N 2\dmat_{jm}\statevec{F}_2^{\ec,\text{MHD}}(\statevec{U}_{ij}, \statevec{U}_{im})\right],$}
\end{equation}
where we peeled the constant mapping term $\frac{\Delta x}{2}$ out of the entropy conservative fluxes for convenience. Next, we expand the volume contribution from the non-conservative term
\begin{equation}\label{eq:nonConsProofAp}
\begin{aligned}
\iprodN{\JanD \spacevec{\mathbb{D}}^{\text{S}}\contraspacevec{B},\statevec{W}}=
\sum\limits_{i,j=0}^{N}\omega_{i}\omega_j\statevec{W}^T_{ij}&\left[\frac{\Delta x}{2}\sum_{m=0}^N \dmat_{im}\JanD_{ij}\left(B_1\right)_{mj}
+ \frac{\Delta x}{2}\sum_{m=0}^N \dmat_{jm}\,\JanD_{ij}\left(B_2\right)_{im}\right],
\end{aligned}
\end{equation}
again, the constant mapping is factored out for convenience.

We focus on the $\xi-$direction term of the volume integral approximations, as the $\eta-$direction is done in an analogous manner. The sum can be written in terms of the SBP matrix \eqref{SBP}, $\qmat_{im}=\omega_{i}\dmat_{im}$,
\begin{equation}
\resizebox{0.925\hsize}{!}{$
\frac{\Delta x}{2}\sum\limits_{j=0}^{N}\omega_{j}\sum\limits_{i=0}^N\statevec{W}^T_{ij}\sum\limits_{m=0}^N 2\omega_i\dmat_{im}\statevec{F}_1^{\ec,\text{MHD}}(\statevec{U}_{ij}, \statevec{U}_{mj})
=\frac{\Delta x}{2}\sum\limits_{j=0}^{N}\omega_{j}\sum\limits_{i=0}^N\statevec{W}^T_{ij}\sum\limits_{m=0}^N 2\qmat_{im}\statevec{F}_1^{\ec,\text{MHD}}(\statevec{U}_{ij}, \statevec{U}_{mj}).$}
\end{equation}
We use the summation-by-parts property $2\qmat_{im} = \qmat_{im} - \qmat_{mi} + \bmat_{im}$, perform a reindexing of $i$ and $m$ to incorperate the $\qmat_{mi}$ term and use the fact that $\statevec{F}_1^{\ec,\text{MHD}}(\statevec{U}_{ij}, \statevec{U}_{mj})$ is symmetric with respect to the index $i$ and $m$ to rewrite the $\xi-$direction contribution to the volume integral approximation as
\begin{equation}\label{eq:summ1Ap}
\begin{aligned}
\sum\limits_{i=0}^N\statevec{W}^T_{ij}\sum_{m=0}^N 2\qmat_{im}\statevec{F}_1^{\ec,\text{MHD}}(\statevec{U}_{ij}, \statevec{U}_{mj})
&= \sum_{i,m=0}^N\statevec W^T_{ij}(\qmat_{im} - \qmat_{mi} + \bmat_{im})\statevec{F}_1^{\ec,\text{MHD}}(\statevec{U}_{ij},\statevec{U}_{mj})\\
&= \sum_{i,m=0}^N\qmat_{im}\left(\statevec W_{ij} - \statevec W_{mj}\right)^T
\statevec{F}_1^{\ec,\text{MHD}}(\statevec{U}_{ij},\statevec{U}_{mj}) \\
&\qquad\qquad\quad + \bmat_{im}\,\statevec W^T_{ij}\statevec{F}_1^{\ec,\text{MHD}}(\statevec{U}_{ij},\statevec{U}_{mj}).
\end{aligned}
\end{equation}

We have divided the entropy flux potentials into Euler, ideal MHD and GLM components \eqref{eq:EulerEntFluxPot}-\eqref{eq:GLMEntFluxPot}. Because the proof at hand only considers ideal MHD terms, we are only concerned with \eqref{eq:MHDEntFluxPot}
\begin{equation}
\Psi_1^{\text{MHD}}= \statevec{w}^T\statevec{f}_1^{\text{MHD}} + \wDotJan B_1
\end{equation}
and the accompanying entropy conservation condition for the MHD part of \eqref{eq:MHDPartDiscrete}
\begin{equation}\label{eq:entConditionInProofAp}
\jump{\statevec{w}}^T\statevec{f}_1^{\ec,\text{MHD}} = \jump{\Psi_1^{\text{MHD}}} - \avg{B_1}\jump{\wDotJan}.
\end{equation}
We apply the form of \eqref{eq:entConditionInProofAp} to rewrite \eqref{eq:summ1Ap}
\begin{equation}\label{eq:moreBsAp}
\left(\statevec W_{ij} - \statevec W_{mj}\right)^T\statevec{F}_1^{\ec,\text{MHD}}(\statevec{U}_{ij},\statevec{U}_{mj}) = \left(\Psi^{\text{MHD}}_1\right)_{ij}-\left(\Psi^{\text{MHD}}_1\right)_{mj}
-\frac{1}{2}\left(\left(B_1\right)_{ij}+\left(B_1\right)_{mj}\right)\left(\wDotJan_{ij}-\wDotJan_{mj}\right).
\end{equation}
Furthermore, note that the entries of the boundary matrix $\bmat$ are only non-zero when $i=m=0$ or $i=m=N$, so
\begin{equation}\label{eq:boundaryBsAp}
\bmat_{im}\statevec{W}^T_{ijk}\statevec{F}_1^{\ec,\text{MHD}}(\statevec{U}_{ij},\statevec{U}_{mj}) = \bmat_{im}\left(\left(\Psi_1^{\text{MHD}}\right)_{ij} - \wDotJan_{ij}\left(B_1\right)_{ij}\right).
\end{equation}

We substitute \eqref{eq:moreBsAp} and \eqref{eq:boundaryBsAp} into the final line of \eqref{eq:summ1Ap} to find
\begin{equation}\label{eq:awfulSumAp}
\resizebox{0.925\hsize}{!}{$
\begin{aligned}
\sum\limits_{i=0}^N\statevec W^T_{ij}\sum_{m=0}^N 2\qmat_{im}\statevec{F}_1^{\ec,\text{MHD}}(\statevec{U}_{ij}, \statevec{U}_{mj})
&=\sum\limits_{i,m=0}^N\qmat_{im}\left[\left({\Psi}_1^{\text{MHD}}\right)_{ij} - \left({\Psi}_1^{\text{MHD}}\right)_{mj} - \frac{1}{2}\left(\left(B_1\right)_{ij} + \left(B_1\right)_{mj}\right)\left(\wDotJan_{ij}-\wDotJan_{mj}\right)\right]\\
&\qquad\qquad\qquad + \bmat_{im}\left[\left(\Psi_1^{\text{MHD}}\right)_{ij} - \wDotJan_{ij}\left(B_1\right)_{ij}\right].
\end{aligned}$}
\end{equation}
We will examine the terms of the sum \eqref{eq:awfulSumAp} systematically from left to right. Now, because the derivative of a constant is zero (i.e. the rows of $\qmat$ sum to zero),
\begin{equation}\label{eq:B11Ap}
\sum\limits_{i,m=0}^N\qmat_{im}\left(\Psi_1^{\text{MHD}}\right)_{ij} = \sum\limits_{i=0}^N\left(\Psi_1^{\text{MHD}}\right)_{ij}\sum\limits_{m=0}^N\qmat_{im} = 0.
\end{equation}
Next, on the second term, we use the summation by parts property \eqref{SBP}, and reindex on the $\qmat_{mi}$ term to rewrite
\begin{equation}\label{eq:B12Ap}
\begin{aligned}
-\!\!\!\sum\limits_{i,m=0}^N\qmat_{im}\left(\Psi_1^{\text{MHD}}\right)_{mj}
&= -\!\!\!\sum\limits_{i,m=0}^N(\bmat_{im}-\qmat_{mi})\left(\Psi_1^{\text{MHD}}\right)_{mj} \\
&=-\!\!\!\sum\limits_{i,m=0}^N(\bmat_{im}-\qmat_{im})\left(\Psi_1^{\text{MHD}}\right)_{ij} \\
&=-\!\!\!\sum\limits_{i,m=0}^N\bmat_{im}\left(\Psi_1^{\text{MHD}}\right)_{ij} + \sum\limits_{i,m=0}^N\qmat_{im}\left(\Psi_1^{\text{MHD}}\right)_{ij} \\
&=-\!\!\!\sum\limits_{i,m=0}^N\bmat_{im}\left(\Psi_1^{\text{MHD}}\right)_{ij} + \sum\limits_{i=0}^N\left(\Psi_1^{\text{MHD}}\right)_{ij}\sum\limits_{m=0}^N \qmat_{im}\\
&=-\!\!\!\sum\limits_{i,m=0}^N\bmat_{im}\left(\Psi_1^{\text{MHD}}\right)_{ij} + 0,
\end{aligned}
\end{equation}
where, again, one term cancels due to consistency of the matrix $\qmat$.

We come next to the terms involving $B_1$ and $\wDotJan$ in \eqref{eq:awfulSumAp}. We leave these terms grouped for convenience. First, we expand to find
\begin{equation}\label{eq:expandGrossAp}
\begin{aligned}
\sum\limits_{i,m=0}^N\qmat_{im}&\left(-\frac{1}{2}\left(\left(B_1\right)_{ij}+\left(B_1\right)_{mj}\right)\left(\wDotJan_{ij}-\wDotJan_{mj}\right)\right) \\
&=-\frac{1}{2}\sum\limits_{i,m=0}^N \qmat_{im}\left(\wDotJan_{ij}\left(B_1\right)_{ij} + \wDotJan_{ij}\left(B_1\right)_{mj} - \wDotJan_{mj}\left(B_1\right)_{ij} - \wDotJan_{mj}\left(B_1\right)_{mj}\right).
\end{aligned}
\end{equation}
We examine each term from \eqref{eq:expandGrossAp}: the first term uses the consistency of $\qmat$ to become zero, the second term is left alone, the third term reindexes $i$ and $m$ and the fourth term uses the SBP property to obtain
\begin{equation}\label{eq:expandGross2Ap}
\begin{aligned}
-\frac{1}{2}\sum\limits_{i,m=0}^N& \qmat_{im}\left(\wDotJan_{ij}\left(B_1\right)_{ij} + \wDotJan_{ij}\left(B_1\right)_{mj} - \wDotJan_{mj}\left(B_1\right)_{ij} - \wDotJan_{mj}\left(B_1\right)_{mj}\right)\\
&=0 - \frac{1}{2}\sum\limits_{i,m=0}^N(\qmat_{im}-\qmat_{mi})\wDotJan_{ij}\left(B_1\right)_{mj} + \frac{1}{2}\sum\limits_{i,m=0}^N(\bmat_{im}-\qmat_{mi})\wDotJan_{mj}\left(B_1\right)_{mj}.
\end{aligned}
\end{equation}
Next, we use the SBP property on the $\qmat_{mi}$ term in the second part of \eqref{eq:expandGross2Ap}, reindex in $i$ and $m$ on the third term to get and use the consistency of the $\qmat$ twice to find
\begin{equation}\label{eq:expandGross3Ap}
\begin{aligned}
-\frac{1}{2}\sum\limits_{i,m=0}^N& \qmat_{im}\left(\wDotJan_{ij}\left(B_1\right)_{ij} + \wDotJan_{ij}\left(B_1\right)_{mj} - \wDotJan_{mj}\left(B_1\right)_{ij} - \wDotJan_{mj}\left(B_1\right)_{mj}\right)\\
&=-\frac{1}{2}\sum\limits_{i=0}^N\wDotJan_{ij}\left(B_1\right)_{ij}\sum\limits_{m=0}^N\qmat_{im}-\frac{1}{2}\sum\limits_{i,m=0}^N(2\qmat_{im}-\bmat_{im})\wDotJan_{ij}\left(B_1\right)_{mj}\\[0.05cm]
&\qquad\qquad\qquad\qquad\qquad\qquad\quad + \frac{1}{2}\sum\limits_{i,m=0}^N\bmat_{im}\wDotJan_{ij}\left(B_1\right)_{ij}-\frac{1}{2}\sum\limits_{i=0}^N\wDotJan_{ij}\left(B_1\right)_{ij}\sum\limits_{m=0}^N\qmat_{im}\\[0.05cm]
&= 0 + \sum\limits_{i,m=0}^N\bmat_{im}\wDotJan_{ij}\left(B_1\right)_{ij} - \sum\limits_{i,m=0}^N\qmat_{im}\wDotJan_{ij}\left(B_1\right)_{mj} + 0
\end{aligned}
\end{equation}

Combining the results of \eqref{eq:awfulSumAp}, \eqref{eq:B11Ap}, \eqref{eq:B12Ap} and \eqref{eq:expandGross3Ap}, we have
\begin{equation}\label{eq:awfulSumReduxAp}
\resizebox{0.925\hsize}{!}{$
\begin{aligned}
\sum\limits_{i=0}^N&\statevec W^T_{ij}\sum_{m=0}^N 2\qmat_{im}\statevec{F}_1^{\ec,\text{MHD}}(\statevec{U}_{ij}, \statevec{U}_{mj}) \\
&=-\!\!\!\sum\limits_{i,m=0}^N\bmat_{im}\left(\Psi_1^{\text{MHD}}\right)_{ij} - \!\!\!\sum\limits_{i,m=0}^N\qmat_{im}\wDotJan_{ij}\left(B_1\right)_{mj}
+ \sum\limits_{i,m=0}^N\bmat_{im}\wDotJan_{ij}\left(B_1\right)_{ij} + \sum\limits_{i,m=0}^N\bmat_{im}\left[\left(\Psi_1^{\text{MHD}}\right)_{ij} - \wDotJan_{ij}\left(B_1\right)_{ij}\right]\\[0.05cm]
&= -\!\!\!\sum\limits_{i,m=0}^N\qmat_{im}\wDotJan_{ij}\left(B_1\right)_{mj}
\end{aligned}$}
\end{equation}

We are now prepared to revisit the contributions from the non-conservative volume terms \eqref{eq:nonConsProofAp}. In the the $\xi-$direction this contribution takes the form
\begin{equation}\label{eq:nonConsProof1Ap}
\begin{aligned}
\sum\limits_{i,j=0}^{N}\omega_{i}\omega_{j}\statevec{W}^T_{ij} \sum_{m=0}^N \dmat_{im}\JanD_{ij}\left(B_1\right)_{mj}
=&\sum\limits_{j=0}^N\omega_{j}\sum_{i,m=0}^N\omega_i\dmat_{im}\statevec{W}^T_{ij}\JanD_{ij}\left(B_1\right)_{mj} \\
=&\sum\limits_{j=0}^N\omega_{j}\sum_{i,m=0}^N\qmat_{im}\statevec{W}^T_{ij}\JanD_{ij}\left(B_1\right)_{mj} \\
=&\sum\limits_{j=0}^N\omega_{j}\sum_{i,m=0}^N\qmat_{im}\wDotJan_{ij}\left(B_1\right)_{mj},
\end{aligned}
\end{equation}
where we have used the definition of the SBP matrix and the property \eqref{eq:contractSource} contracting the non-conservative term into entropy space.
Comparing the result \eqref{eq:nonConsProof1Ap} and the term of \eqref{eq:awfulSumReduxAp} we see that they cancel each other when added together. Thus, the contribution in the $\xi-$direction is
\begin{equation}\label{eq:awfulSumRedux3Ap}
\sum\limits_{j=0}^N\omega_{j}\sum\limits_{i=0}^N\statevec W^T_{ij}\left[\sum_{m=0}^N 2\qmat_{im}\statevec{F}_1^{\ec,\text{MHD}}(\statevec{U}_{ij}, \statevec{U}_{mj})
+ \sum_{m=0}^N\qmat_{im}\wDotJan_{ij}\left(B_1\right)_{mj}\right] = 0.
\end{equation}

An analogous result to \eqref{eq:awfulSumRedux3Ap} holds in the $\eta-$direction of the volume integral approximations, leading to the desired result
\begin{equation}
\iprodN{\spacevec{\mathbb{D}}\bigcontravec F^{\ec,\text{MHD}},\statevec W} + \iprodN{\JanD\spacevec{\mathbb{D}}^S\contraspacevec{B},\statevec{W}}=0.
\end{equation}

%%%%%%%%%%%%%%%%%%%%%%%%%%%%%%%%%%%%%%%%%%%%%%%%%%%%%%%%%%%%%%%%%%%%%%%%
\section{Pressure initial condition for insulating compressible Taylor-Green vortex}\label{app:ICs}
%%%%%%%%%%%%%%%%%%%%%%%%%%%%%%%%%%%%%%%%%%%%%%%%%%%%%%%%%%%%%%%%%%%%%%%%

We start from the incompressible ideal MHD equations
\begin{equation}
\begin{aligned}
\pderivative{\spacevec{v}}{t} + \spacevec{v}\cdot\spacevec{\nabla}\spacevec{v} &= -\frac{1}{\rho}\spacevec{\nabla}p+\frac{1}{\rho}\spacevec{B}\cdot\spacevec{\nabla}\spacevec{B}\\[0.1cm]
\pderivative{\spacevec{B}}{t} + \spacevec{v}\cdot\spacevec{\nabla}\spacevec{B} &= \spacevec{B}\cdot\spacevec{\nabla}\spacevec{v}\\[0.1cm]
\spacevec{\nabla}\cdot\spacevec{v} &= 0,\quad \spacevec{\nabla}\cdot\spacevec{B} = 0\\[0.1cm]
\end{aligned}
\end{equation}
where, note, the velocity and magnetic fields are assumed to be solenoidal. We then take the divergence of the momentum and induction equations. The divergence of the induction equation immediately becomes zero because of the solenoidal assumptions. After repeated use of the solenoidal assumptions the divergence of the momentum equations yields a Poisson equation for the pressure
\begin{equation}\label{eq:Poisson1}
\begin{aligned}
\spacevec{\nabla}^2 p &= -\rho\left\{\left(\pderivative{v_1}{x}\right)^2+\left(\pderivative{v_2}{y}\right)^2+\left(\pderivative{v_3}{z}\right)^2 + 2 \left[\pderivative{v_2}{x}\pderivative{v_1}{y} + \pderivative{v_3}{x}\pderivative{v_1}{z} + \pderivative{v_3}{y}\pderivative{v_2}{z}\right]\right\}\\
&\qquad
+
\left(\pderivative{B_1}{x}\right)^2+\left(\pderivative{B_2}{y}\right)^2+\left(\pderivative{B_3}{z}\right)^2 + 2 \left[\pderivative{B_2}{x}\pderivative{B_1}{y} + \pderivative{B_3}{x}\pderivative{B_1}{z} + \pderivative{B_3}{y}\pderivative{B_2}{z}\right],
\end{aligned}
\end{equation}
where
\begin{equation}
\spacevec{\nabla}^2 p = \frac{\partial^2 p}{\partial x^2} + \frac{\partial^2 p}{\partial y^2} + \frac{\partial^2 p}{\partial z^2}.
\end{equation}
The initial conditions of an insulating Taylor-Green vortex \cite{Lee2008} for the density, velocity and magnetic fields, reproduced here for convenience, are
\begin{equation}\label{eq:ICsProof}
\begin{aligned}
\rho &= 1, \\[0.1cm]
\spacevec{v} &= \left(\sin(x)\cos(y)\cos(z),\,-\cos(x)\sin(y)\cos(z),0\right)^T, \\[0.1cm]
\spacevec{B} &= \left(\cos(2x)\sin(2y)\sin(2z),\,-\sin(2x)\cos(2y)\sin(2z),\,0\right)^T \\[0.1cm]
\end{aligned}
\end{equation}
The assumed initial conditions simplify the expression \eqref{eq:Poisson1} to be
\begin{equation}\label{eq:PoisssonPressure}
\spacevec{\nabla}^2 p = -\left\{\left(\pderivative{v_1}{x}\right)^2+\left(\pderivative{v_2}{y}\right)^2+ 2\pderivative{v_2}{x}\pderivative{v_1}{y}\right\}
+
\left(\pderivative{B_1}{x}\right)^2+\left(\pderivative{B_2}{y}\right)^2 + 2\pderivative{B_2}{x}\pderivative{B_1}{y}.
\end{equation}
Applying the form of the initial conditions \eqref{eq:ICsProof} the Poisson equation for the pressure \eqref{eq:PoisssonPressure} is
\begin{equation}
\spacevec{\nabla}^2 p = -\cos^2(z)\left(\cos(2x)+\cos(2y)\right)-4\sin^2(2z)\left(\cos(4x)+\cos(4y)\right).
\end{equation}
From this equation it is straightforward to derive an intial pressure, assuming periodic boundary conditions, to be
\begin{equation}
p = C + \frac{1}{16}\left(\cos(2x)+\cos(2y)\right)\left(2+\cos(2z)\right) + \frac{1}{16}\left(\cos(4x) + \cos(4y)\right)\left(2-\cos(4z)\right).
\end{equation}
We note that the pressure is uniquely defined up to a constant $C$. The value of $C$ is selected to specify the initial Mach number of the flow configuration.
\end{document}